\documentclass{article}
\usepackage{expl3,xparse}      
\usepackage[table,dvipsnames]{xcolor} 
\usepackage{geometry}          
\usepackage{mfirstuc}          
\usepackage{settobox}          
\usepackage{enumitem}          
    \setlist[enumerate,1]{label = $\bullet$}
\usepackage[textwidth=16.3mm,textsize=scriptsize]{todonotes}
    \newcommand*{\newcommentator}[2][red!20]{%
        \expandafter\newcommand\csname #2\endcsname[1]{\todo[color=#1]{{\bf\makefirstuc{#2}:} ##1}}%
        \expandafter\newcommand\csname big#2\endcsname[1]{\todo[inline,color=#1]{{\bf\makefirstuc{#2}:} ##1}}}
    \newcommentator{pablo}

\usepackage{graphics,graphicx}
\usepackage{trimclip,adjustbox}

\usepackage{amssymb,amsmath}   
    \numberwithin{equation}{section} 
\usepackage{mathtools}         
\usepackage{mleftright}        
    \mleftright
\usepackage{stackrel}          
\usepackage{pgfcore}
\usepackage{pgfplots}
    \pgfplotsset{compat=1.18} 
\usepackage{tikz-cd}           
\usepackage{aliascnt}          

\usepackage{mathcro}
\usepackage{catcro}
\usepackage{mathenv}

\usepackage[pdfencoding=auto]{hyperref}
    \hypersetup{bookmarksopen=true, bookmarksnumbered=true, linktoc=all, hidelinks}
    
\usepackage{thmenv}

    \usetikzlibrary{calc}
    \usetikzlibrary{decorations.pathmorphing}
    \usetikzlibrary{spath3}

    \tikzset{curve/.style={settings={#1},to path={(\tikztostart)
        .. controls ($(\tikztostart)!\pv{pos}!(\tikztotarget)!\pv{height}!270:(\tikztotarget)$)
        and ($(\tikztostart)!1-\pv{pos}!(\tikztotarget)!\pv{height}!270:(\tikztotarget)$)
        .. (\tikztotarget)\tikztonodes}},
        settings/.code={\tikzset{quiver/.cd,#1}
            \def\pv##1{\pgfkeysvalueof{/tikz/quiver/##1}}},
        quiver/.cd,pos/.initial=0.35,height/.initial=0}

    \tikzset{between/.style n args={2}{/tikz/execute at end to={
        \tikzset{spath/split at keep middle={current}{#1}{#2}}
    }}}

    \tikzset{tail reversed/.code={\pgfsetarrowsstart{tikzcd to}}}
    \tikzset{2tail/.code={\pgfsetarrowsstart{Implies[reversed]}}}
    \tikzset{2tail reversed/.code={\pgfsetarrowsstart{Implies}}}
    \tikzset{no body/.style={/tikz/dash pattern=on 0 off 1mm}}

\let\oldsection\section
\makeatletter
\renewcommand\tableofcontents{%
    \oldsection*{\contentsname\@mkboth{\MakeUppercase\contentsname}{\MakeUppercase\contentsname}}\@starttoc{toc}%
}
\makeatother
\usepackage{mathdots}
\usepackage{bookmark,bbm}
\usepackage{float}

\title{Fully-Dualizable and Invertible $\mathcal{E}_n$-Algebras}
\author{Pablo Bustillo Vazquez}

\begin{document}

\maketitle
\begin{abstract}
    We prove a conjecture of Brochier, Jordan, Safronov, and Snyder \cite{BJSS21}, first formulated by Lurie \cite{Lur09a}, characterizing fully-dualizable and invertible $\mathcal{E}_n$-algebras viewed as objects in the higher Morita categories $\Mor_n(\V)$ \cite{Lur09a, Sch14, Hau17a, Hau23}.
    In other words, we characterize those $\mathcal{E}_n$-algebras which give rise to $(n+1)$-dimensional topological quantum field theories (TQFT), and those which give rise to invertible theories.
\end{abstract}
\tableofcontents
\newpage

\section{Introduction}

One of the central insights in the modern study of topological quantum field theory (TQFT) is that the structure of a field theory is governed by higher-categorical dualizability conditions.
This idea originates in the pioneering work of Baez-Dolan \cite{BD95}, who formulated the Cobordism Hypothesis.
They propose that an $n$-dimensional fully extended TQFT should be the data of a symmetric monoidal $(\infty,m)$-category $\cat{A}$ and a ``fully-$n$-dualizable'' object $a \inset \cat{A}$.
The initial such TQFT is then conjectured to be given by the $(\infty,n)$-category of framed bordisms. 
Typically, the symmetric monoidal $(\infty,m)$-categories $\cat{A}$ of interest arise from studying algebraic structures and the Cobordism Hypothesis can be seen as a duality between algebra and manifold topology.

Lurie observed in \cite[Sec. $4.1$]{Lur09a} that the collection of $\mathcal{E}_n$-algebras, in some fixed symmetric monoidal $(\infty, 1)$-category $\V$ with compatible colimits, can be organized in such a symmetric monoidal $(\infty, n+1)$-category $\Mor_n(\V)$ called the higher Morita category.
This generalizes the well-known classical Morita $(2,2)$-category whose objects are non-commutative rings and whose morphisms are bimodules.
The idea of this category was first sketched by Lurie in \cite[Def. $4.1.11$]{Lur09a} and later formalized by Scheimbauer and Haugseng in \cite{Sch14, Hau17, Hau23}.
It follows from the construction that all $k$-morphisms in $\Mor_n(\V)$, for $1 \leq k < n$, have left and right adjoints; this implies that any $\mathcal{E}_n$-algebra $A$ automatically gives rise to an $n$-dimensional fully-extended TQFT.
The computation of the partition function of this TQFT is exactly given by factorization homology with coefficients in $A$ as predicted by Lurie \cite[Thm. $4.1.24$]{Lur09a} and formalized in \cite{Sch14}.
However, the $n$-morphisms of $\Mor_n(\V)$ do not necessarily have adjoints, which raises the question of characterizing those $\mathcal{E}_n$-algebras $A$ for which the associated TQFT extends to $(n+1)$ dimensions, i.e., the fully-dualizable $\mathcal{E}_n$-algebras.
The purpose of this paper is to answer this question and further classify which $\mathcal{E}_n$-algebras form an invertible TQFT.

Classically, the fully-dualizable objects of the classical Morita $(2,2)$-category are known as smooth and proper algebras, and the invertible objects are known as Azumaya algebras.
These notions were generalized to the dg-setting by Toën \cite{Toe12}.
More explicitly, an algebra $A$ in $_{k}\Mod$ is smooth if $A$ is perfect as an $A^{op} \otimes_k A$-module, and proper if $A$ is perfect as a $k$-module.
A smooth and proper algebra $A$ is called an Azumaya algebra if the canonical
\begin{equation*}
    k \longeq \HC{1}(A) \\
    A^{\textnormal{op}} \otimes_k A \longeq \End_k(A)
\end{equation*}
are equivalences.

Lurie claimed in \cite[Rem. $4.1.27$]{Lur09a} that unwinding the proof of the Cobordism Hypothesis should give an explicit generalization, involving factorization homology, of the above conditions for full-dualizability of $\mathcal{E}_n$-algebras.
This criterion was later stated as a conjecture by Brochier, Jordan, Safronov, Snyder \cite[Conj. $1.8$]{BJSS21}, which we prove in more generality in \hyperref[thm:full_dualizability_criterion]{Theorem A} stated below using the language of factorization homology \cite{AFT17a, AFR18, AFT17}.

\begin{blank}[{\hyperref[thm:full_dualizability_criterion]{Thm. A}}]
    Let $\cat{V} \inset \EAlg{\infty}(\Cat{1}^\geom)$ be a symmetric monoidal $(\infty,1)$-category with geometric realizations and compatible tensor product.
    Let $0 \leq k < n$ and $M$ be a $k$-morphism in $\Mor_n(\cat{V})$.
    Then $M$ is fully-dualizable if and only if, for all $0 \leq i \leq n-k$, the canonical actions
    \begin{equation*}[rCl]
        \int_{(S^{i-1}, D^i)} (M, A) \curvearrowright &M& \\
        &M& \curvearrowleft \int_{(S^{i-1}, D^i)} (M, B)
    \end{equation*}
    exhibit $M$ as a dualizable module, where $A$ (resp. $B$) is the (co)domain of $M$ (see \autoref{constr:canonical_actions_thm_A} for details on these actions).
\end{blank}

One can interpret the factorizations $\int_{(S^{i-1}, D^i)} (M, A)$ as a higher version of Hochschild Homology of $M$ relative to $A$.
Brochier \textit{et al.} also conjectured a generalization of the characterization of Azumaya algebras \cite[Conj. $1.9$]{BJSS21}, which we prove in \hyperref[thm:invertibility_criterion]{Theorem B} stated below.

\begin{blank}[{\hyperref[thm:invertibility_criterion]{Thm. B}}]
    Let $\cat{V} \inset \EAlg{\infty}(\Cat{1}^\geom)$ be a symmetric monoidal $(\infty,1)$-category with geometric realizations and compatible tensor product.
    Let $0 \leq k < n$ and $M$ be a $k$-morphism in $\Mor_n(\cat{V})$.
    Then $M$ is invertible if and only if it is fully-dualizable and, for all $1 \leq i \leq n-k$, the universal $\mathcal{E}_{n-k-i+1}$-algebra maps
    \[
        \int_{(S^{i-1}, D^i)} (M, A) \longeq \HC{n-k-i}^B(M) \\
        \int_{(S^{i-1}, D^i)} (M, B) \longeq \HC{n-k-i}^A(M)
    \]
    are equivalences, where $A$ (resp. $B$) is the (co)domain of $M$ (see \autoref{constr:canonical_action_units} for details on these maps).
\end{blank}

\subsection{Overview}

\paragraph{Section 2}
To establish these results, we first need to prove a few preliminary results about $(\infty,n)$-categories with adjoints in \autoref{sec:adjoints}.
Most of these results are well-known to experts, but lack any rigorous proof in the literature, so we include them here for completeness.
All the work of this section heavily relies on the seminal result of Riehl and Verity \cite[Thm. 4.4.11]{RV16} bringing adjunctions in $(\infty,2)$-categories into a powerful homotopy coherent framework.
One of the most important of these folklore theorems is that the inclusion $\Cat{n}^{\textnormal{adj}} \mono \Cat{n}$ has left and right adjoints (see \autoref{prop:Free_Sub_exist}).
The existence of the left adjoint is trivial from the definition, but the existence of the right adjoint is more subtle.
One can construct it explicitly by hand, formalizing the intuition that it is given by the full sub-$(\infty,n)$-category on those morphisms which are fully-adjointable.
We choose here to construct this functor more formally, using the Adjoint Functor Theorem \cite[\href{https://kerodon.net/tag/06Q5}{Thm. 06Q5}]{kerodon} by proving that $\Cat{n}^{\textnormal{adj}} \mono \Cat{n}$ preserves colimits:

\begin{blank}[\autoref{prop:colim_adj}]
    Let $F \in I \to \Cat{n}$ be a diagram of $(\infty,n)$-categories which all admit right (resp. left) adjoints for all $k$-morphisms.
    Then so does $\Colim F$.
\end{blank}

The argument relies on the explicit way in which colimits of $(\infty,n)$-categories are constructed.
More explicitly, a $k$-morphism in some $\Colim_{i \inset \cat{I}} \cat{C}_i$ is always given by some intricate pasting of $k$-morphisms in the $\cat{C}_i$'s.
This is an intrinsic consequence of the ``Segalification'' construction for $(\infty,n)$-categories described in \cite{BS24}.
It is then well-known that composition of adjoints should have adjoints, which will in turn imply that $\Cat{n}^{\textnormal{adj}}$ is stable under colimits.
The main advantage of this approach is that it gives a general method for proving that some categories, defined as colimits, have adjoints.
We view this as a powerful tool for showing results of the form: ``to construct a [\textit{insert: higher notion of adjoint}], it suffices to construct [\textit{insert: specific family of usual adjunctions}]''.
A first statement of the type is given by the result below:
\begin{blank}[\autoref{prop:free_full_adj}]
    The free $(\infty, k+1)$-category on a single fully-adjointable $k$-morphism $\Free^{\{k\}}(c_k)$ is the colimit of the diagram
\[\begin{tikzcd}
	&&&& {\Free^{\{k\}}(c_k)} \\
	\\
	\cdots && {\Adj_k} && {\Adj_k} && {\Adj_k} && \cdots \\
	& {c_k} && {c_k} && {c_k} && {c_k}
	\arrow[curve={height=-12pt}, dashed, two heads, from=3-1, to=1-5]
	\arrow[curve={height=-6pt}, dashed, two heads, from=3-3, to=1-5]
	\arrow[dashed, two heads, from=3-5, to=1-5]
	\arrow[curve={height=6pt}, dashed, two heads, from=3-7, to=1-5]
	\arrow[curve={height=12pt}, dashed, two heads, from=3-9, to=1-5]
	\arrow["R"', two heads, from=4-2, to=3-1]
	\arrow["L", two heads, from=4-2, to=3-3]
	\arrow["R"', two heads, from=4-4, to=3-3]
	\arrow["L", two heads, from=4-4, to=3-5]
	\arrow["R"', two heads, from=4-6, to=3-5]
	\arrow["L", two heads, from=4-6, to=3-7]
	\arrow["R"', two heads, from=4-8, to=3-7]
	\arrow["L", two heads, from=4-8, to=3-9]
\end{tikzcd}\]
where all projections are epimorphisms, and $\Adj_k = \sigma^{k-1}(\Adj)$ is the walking adjoint $k$-morphism.
\end{blank}
This formalizes the principle that providing a ``once fully-adjoint'' $k$-morphism is equivalent to fitting it within a sequence of adjunctions.
The proof of the main \autoref{thm:invertibility_criterion} relies on a similar guiding principle: ``to show $f$ is fully-adjoint, it suffices to find adjoints for the iterated (co)units of the form $\eta(\epsilon(\epsilon(\ldots \eta(f)\ldots)))$'' (where we range over all words in $\epsilon$ and $\eta$ of some fixed length).

\paragraph{Section 3}
In the following section, we then focus on the geometric and algebraic tools required for our proofs.
On the geometric side, the main ingredient here is the theory of factorization homology of conically smooth stratified spaces developed by Ayala, Francis, Rozenblyum and Tanaka in \cite{AFT17a,AFR18,AFT17}.
For us, factorization homology is the natural tool to manipulate these $\mathcal{E}_k$-algebras in iterated bimodules $M$.
Following the guiding principle above, factorization homology will allow us to explicitly describe the algebraic structure corresponding to each of these iterated (co)units $\epsilon(\eta(\epsilon(\ldots \eta(M)\ldots)))$.

In \autoref{subsec:homological}, we introduce the notion of \textit{homological maps} and formulate a result to check when various ways of transferring algebraic structures between stratified spaces are equivalent.
In \autoref{subsec:handlebodies}, we define various stratified spaces which will ``carry'' this dualizability data, and various stratified maps which induce functors sending ``$M$'' to these ``$\eta(\eta(\epsilon(\ldots \epsilon(M)\ldots)))$''.
These spaces can be explicitly visualized as stratifications of the usual handlebodies.
Their algebraic structure can be visualized as iteratively ``pinching'' these handlebodies, through constructible bundle maps.

Finally, in \autoref{subsec:dual} and \autoref{subsec:deligne}, we prove a couple of technical algebraic results.
One of the main ingredients in our proof of \hyperref[thm:full_dualizability_criterion]{Theorem A} is a criterion (\autoref{prop:existence_dual}) for when $n$-morphisms in $\Mor_n(\V)$ have adjoints, essentially generalizing \cite[Prop. $4.6.2.13$]{Lur09a}.
Finally, to complete our proof of \hyperref[thm:invertibility_criterion]{Theorem B}, we also need to understand how Hochschild cohomology appears as duality data in $\Mor_n(\V)$, which is done in \autoref{prop:identifying_units}.

\paragraph{Section 4}
This short section is the compiled proofs of both our main Theorems.

\begin{notation}
    We will use the following notations and conventions.
    \begin{enumerate}
        \item For $n \geq 0$, we denote by $\Cat{n}$ the presentable $(\infty,1)$-category of $(\infty,n)$-categories.
        We have a sequence of reflections and coreflections:
    \[\begin{tikzcd}
        \Spa & {\Cat{1}} & {\Cat{2}} & \ldots & {\Cat{n}} & \ldots
        \arrow[""{name=0, anchor=center, inner sep=0}, hook, from=1-1, to=1-2]
        \arrow[""{name=1, anchor=center, inner sep=0}, "{{-}_{\leq 0}}", curve={height=-18pt}, from=1-2, to=1-1]
        \arrow[""{name=2, anchor=center, inner sep=0}, "{|-|_{>0}}"', curve={height=18pt}, from=1-2, to=1-1]
        \arrow[""{name=3, anchor=center, inner sep=0}, hook, from=1-2, to=1-3]
        \arrow[""{name=4, anchor=center, inner sep=0}, "{|-|_{>1}}"', curve={height=18pt}, from=1-3, to=1-2]
        \arrow[""{name=5, anchor=center, inner sep=0}, "{{-}_{\leq 1}}", curve={height=-18pt}, from=1-3, to=1-2]
        \arrow[""{name=6, anchor=center, inner sep=0}, hook, from=1-3, to=1-4]
        \arrow[""{name=7, anchor=center, inner sep=0}, "{|-|_{>2}}"', curve={height=18pt}, from=1-4, to=1-3]
        \arrow[""{name=8, anchor=center, inner sep=0}, "{{-}_{\leq 2}}", curve={height=-18pt}, from=1-4, to=1-3]
        \arrow[""{name=9, anchor=center, inner sep=0}, hook, from=1-4, to=1-5]
        \arrow[""{name=10, anchor=center, inner sep=0}, "{{-}_{\leq n-1}}", curve={height=-18pt}, from=1-5, to=1-4]
        \arrow[""{name=11, anchor=center, inner sep=0}, "{|-|_{>n-1}}"', curve={height=18pt}, from=1-5, to=1-4]
        \arrow[""{name=12, anchor=center, inner sep=0}, hook, from=1-5, to=1-6]
        \arrow[""{name=13, anchor=center, inner sep=0}, "{{-}_{\leq 0}}", curve={height=-18pt}, from=1-6, to=1-5]
        \arrow[""{name=14, anchor=center, inner sep=0}, "{|-|_{>n}}"', curve={height=18pt}, from=1-6, to=1-5]
        \arrow["\dashv"{anchor=center, rotate=-88}, draw=none, from=0, to=1]
        \arrow["\dashv"{anchor=center, rotate=-92}, draw=none, from=2, to=0]
        \arrow["\dashv"{anchor=center, rotate=-90}, draw=none, from=3, to=5]
        \arrow["\dashv"{anchor=center, rotate=-90}, draw=none, from=4, to=3]
        \arrow["\dashv"{anchor=center, rotate=-94}, draw=none, from=6, to=8]
        \arrow["\dashv"{anchor=center, rotate=-86}, draw=none, from=9, to=10]
        \arrow["\dashv"{anchor=center, rotate=-86}, draw=none, from=7, to=6]
        \arrow["\dashv"{anchor=center, rotate=-94}, draw=none, from=12, to=13]
        \arrow["\dashv"{anchor=center, rotate=-94}, draw=none, from=11, to=9]
        \arrow["\dashv"{anchor=center, rotate=-86}, draw=none, from=14, to=12]
    \end{tikzcd}\]

        \item For $n \geq 0$, we have an adjunction
    \[\begin{tikzcd}
        {\mathbf{Cat}_{(n,n)}} & {\Cat{n}}
        \arrow[curve={height=6pt}, hook, from=1-1, to=1-2]
        \arrow["{\ho_n}"', curve={height=6pt}, from=1-2, to=1-1]
    \end{tikzcd}\]

        \item We denote by $c_k$ the walking $k$-morphism, which we can see as an $(\infty, n)$-category for all $n \geq k$.
        We denote by $\partial c_k$ the boundary of the walking $k$-morphism, i.e., the walking pair of parallel $(k-1)$-morphisms, which can also be seen as an $(\infty,n)$-category for all $n \geq k-1$.
        It is the colimit of the diagram
    \[\begin{tikzcd}
        {c_0} & {c_1} & \cdots & {c_{k-1}} \\
        &&&& {\partial c_k} \\
        {c_0} & {c_1} & \cdots & {c_{k-1}}
        \arrow[from=1-1, to=1-2]
        \arrow[dashed, from=1-1, to=2-5]
        \arrow[from=1-1, to=3-2]
        \arrow[from=1-2, to=1-3]
        \arrow[dashed, from=1-2, to=2-5]
        \arrow[from=1-2, to=3-3]
        \arrow[from=1-3, to=1-4]
        \arrow[from=1-3, to=3-4]
        \arrow[dashed, from=1-4, to=2-5]
        \arrow[from=3-1, to=1-2]
        \arrow[dashed, from=3-1, to=2-5]
        \arrow[from=3-1, to=3-2]
        \arrow[from=3-2, to=1-3]
        \arrow[dashed, from=3-2, to=2-5]
        \arrow[from=3-2, to=3-3]
        \arrow[from=3-3, to=1-4]
        \arrow[from=3-3, to=3-4]
        \arrow[dashed, from=3-4, to=2-5]
    \end{tikzcd}\]
        A $k$-morphism $f\in c_k \to \cat{C}$ is also denoted $f\in a_{k-1} \to b_{k-1} \in \ldots \in a_0 \to b_0 \in \cat{C}$ where $a_i, b_i \in c_i \to \cat{C}$ are the induced projections above. 

        \item We let $\Cat{1}^\geom$ denote the symmetric monoidal $(\infty,1)$-category whose:
        \begin{enumerate}
            \item \textbf{Objects} are $(\infty,1)$-categories admitting geometric realizations.
            \item \textbf{Morphisms} are functors preserving geometric realizations.
        \end{enumerate}
        The symmetric monoidal structure is given by \cite[Cor. $4.8.1.4$]{Lur17}.
        As noted by Lurie in \cite[Rem. $4.8.1.9$]{Lur17}, unwinding the definitions, we see that a symmetric monoid in $\Cat{1}^\geom$ is precisely the datum of a symmetric monoidal $(\infty,1)$-category with geometric realizations and a compatible tensor product, in the sense that:
        the tensor product commutes with geometric realizations in each variable separately.

        \item A refinement of stratified space will always be denoted by a dashed arrow $X \dashrightarrow Y$.
        A (weakly) constructible bundle will always be denoted by a double-headed arrow $X \epi Y$.
        An open embedding will always be denoted by a hooked arrow $X \mono Y$ or by $X \longeq Y$ if it is an isotopy equivalence.
        A closed inclusion will always be denoted by a hooked arrow $X \xmono[\textnormal{cl}] Y$ with the label ``cl''.

        \item We let $S^{-1} = \emptyset$ and $D^0 = \{\bullet\}$.
                
        \item We denote operads \textit{à la} Lurie \cite[Chap. $2$]{Lur17} by $\operad{O} \to \Fin^*$ with an underline.
        Their ``underlying category'', \textit{i.e.} the fiber $\operad{O}^{\{\bullet\}^+}$ over $\{\bullet\}^+$, is denoted by $\mathcal{O}$ without an underline.
    \end{enumerate}
\end{notation}

\subsection{Acknowledgments}

The author would like to thank his PhD supervisor John Francis for many helpful discussions.

\section{Adjoints in \texorpdfstring{$(\infty,n)$}{(∞, n)}-categories}\label{sec:adjoints}

Throughout this section, we establish a few ``folklore'' results about $(\infty,n)$-categories with adjoints, which are crucial for the proof of the main Theorems.
However expected these results may seem, we could not find any rigorous proof of them in the literature, so we include them here for completeness.
The reader can safely skip this section on a first reading, and come back to it when needed.

Our approach is summarized as follows.
To work with robust homotopically-coherent notion, we define fully-adjointable morphisms in a $(\infty,n+1)$-category $\cat{C}$ as the morphisms that appear in the image of $\Sub^{\{1, \ldots, n\}}(\cat{C}) \mono \cat{C}$, where $\Sub^{\{1, \ldots, n\}}(\cat{C})$ is the full sub-$(\infty,n+1)$-category with adjoints.
To make sense of this definition, we then need to show that $\Sub^{\{1, \ldots, n\}}(\cat{C})$ is well-defined, and that the functor $\Sub^{\{1, \ldots, n\}}(\cat{C}) \mono \cat{C}$ is a monomorphism (see \autoref{prop:Free_Sub_exist}), which is the content of the second \autoref{subsec:Free_Sub}.
One way to tackle this problem is to show that colimits of $(\infty,n)$-categories with adjoints have adjoints (see \autoref{prop:colim_adj}), which is the content of the first \autoref{subsec:colim_adj}.
The advantage of this strategy is that it pushes us to develop tools to decide when a colimit of $(\infty,n)$-categories has adjoints in general.
This becomes important to link our definition of fully-adjointable morphisms to the more classical one: has an adjoint, with (co)units having adjoints, with (co)units having adjoints, and so on.
This is the content of \autoref{cor:lifting_once_fully_dualizable}, the main result of \autoref{subsec:lifting_adj}.
Finally, we finish this section by proving that the classifying space of the free/walking full-adjunction is contractible, which is the content of \autoref{subsec:invertibility} and will be useful for our second main result, \autoref{thm:invertibility_criterion}.

Before heading into the content of this section, we first state the definition of an adjoint morphism and the celebrated Theorem of Riehl and Verity \cite[Thm. 4.4.11]{RV16}.

\begin{definition}[Adjoint]
    Let $n > k \geq 1$ and $\cat{C}\in \Cat{n}$.
    We say that $l\in a_{k-1} \to b_{k-1} \in \ldots \in a_0 \to b_0 \in \cat{C}$ is a left adjoint if its equivalence class in $\ho_2(\cat{C}(a_0,b_0)(a_1,b_1)\ldots(a_{k-2}, b_{k-2}))$ is a left adjoint.
\end{definition}

\begin{construction}[Free/Walking Adjunction]
Recall that Schanuel and Street \cite{SS86} defined combinatorially the free $(2,2)$-category $\Adj$ containing an adjunction $L \in \begin{tikzcd}
        + & -
        \arrow[""{name=0, anchor=center, inner sep=0}, curve={height=-6pt}, from=1-1, to=1-2]
        \arrow[""{name=1, anchor=center, inner sep=0}, curve={height=-6pt}, from=1-2, to=1-1]
        \arrow["\dashv"{anchor=center, rotate=-90}, draw=none, from=0, to=1]
    \end{tikzcd} \in R $.
This category was later studied in depth by Riehl and Verity \cite{RV16}, in a higher categorical context.
\end{construction}

The results of this paper rely greatly on the powerful Theorem below.

\begin{theorem}[Homotopy Uniqueness of Adjunctions {\cite[Thm. 4.4.11]{RV16}}]\label{thm:riehl_verity}
   Let $\cat{C}\in \Cat{2}$.
   Precomposition by $L,R \in c_k \to \Adj$ gives monomorphisms
    \[\begin{tikzcd}
        {[\Adj,\mathcal{C}]} && {[c_1, \mathcal{C}]}
        \arrow["{L^*}", curve={height=-6pt}, hook, from=1-1, to=1-3]
        \arrow["{R^*}"', curve={height=6pt}, hook', from=1-1, to=1-3]
    \end{tikzcd}\]
    selecting the connected components of left (resp. right) adjoint morphisms.
\end{theorem}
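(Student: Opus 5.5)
We reduce the statement to homotopy coherent adjunction data in $(\infty,2)$-categories, following Riehl and Verity. Here $[\Adj,\cat{C}]$ denotes the space (the $\infty$-groupoid core) of functors $\Adj \to \cat{C}$, and similarly $[c_1,\cat{C}]$. The claim has two parts:
\begin{enumerate}
    \item the restriction $L^* \in [\Adj,\cat{C}] \to [c_1,\cat{C}]$ is a monomorphism of spaces, i.e.\ its fibers are empty or contractible;
    \item its image is exactly the union of those components of $[c_1,\cat{C}]$ whose points are left adjoints in $\ho_2(\cat{C})$.
\end{enumerate}
The case of $R^*$ follows by applying the same argument to $\cat{C}^{\mathrm{co}}$ (reversing $2$-cells). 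This swaps left and right adjoints and exchanges the roles of $L$ and $R$ under the self-duality of $\Adj$.

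\textbf{Image.} We first compute the image. If $l$ lies in the image, then applying $\ho_2$ to $\Adj \to \cat{C}$ exhibits $l$ as a left adjoint in the homotopy $2$-category. Conversely, suppose $l$ is a left adjoint in $\ho_2(\cat{C})$. We choose representatives $r$, $\eta$, $\epsilon$ in $\cat{C}$ together with homotopies witnessing the triangle identities, and we must extend this to a full functor $\Adj \to \cat{C}$. This is the content of \cite[Thm. 4.3.11]{RV16}. They present $\Adj$ as a simplicial computad built from the Schanuel–Street combinatorics of squiggles. Its subcomputad generated by $l$, or by $(l,r,\eta,\epsilon)$ with one triangle $3$-cell, has parental cell attachments whose relative inclusions are built out of inner horn and outer horn inclusions where the relevant outer edge is forced to be invertible. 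Hence extensions exist in any quasi-category-enriched model of $\cat{C}$.

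\textbf{Monomorphism.} For the monomorphism statement we must show that the space of extensions of a fixed left adjoint $l$ along $c_1 \mono \Adj$ is contractible. Again using the computad filtration, the inclusion $\{l\} \mono \Adj$ is a transfinite composite of pushouts of cells. Each cell is filled uniquely up to contractible choice, because the attaching maps are (special) outer horns in the hom-quasi-categories, and these admit fillers with contractible spaces of choices once the distinguished edge is invertible. Invertibility is guaranteed precisely by the triangle identities in $\ho_2$ for the first few cells, and by inductive hypothesis for the higher ones. Thus the restriction map is a trivial fibration over the components of left adjoints, giving both claims at once.

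\textbf{Transport to $\Cat{2}$.} Finally, we transport the result from quasi-category-enriched categories to $\Cat{2}$ via the equivalence of models, which preserves mapping spaces. The main obstacle is the combinatorial identification of the cell filtration of $\Adj$ relative to $l$ and the verification that every attaching horn is fillable with contractible choice. Here I would rely directly on the squiggle description of the non-degenerate simplices of $\Adj$ and the parental relation from \cite[Sec. 4]{RV16}, rather than reprove it.
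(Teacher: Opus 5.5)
Your top-level move, reducing to Riehl--Verity, is what the paper does: it simply quotes \cite[Thm.~4.4.11]{RV16} and gives no argument. Your image paragraph and the passage to $\mathcal{C}^{\mathrm{co}}$ for $R^*$ are fine. The gap is in the monomorphism paragraph, which is where all the content of the statement lies.

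\textbf{The first stage is not a horn filling.} The inclusion $c_1 \to \Adj$ is not a composite of pushouts of horn inclusions with contractible filler spaces. The first cells you must attach over $\{l\}$ are the free $0$-arrow $r$ together with $\epsilon\colon lr\Rightarrow 1_b$ (or $\eta$). The fiber of that stage over $l$ is the space of all such pairs $(r,\epsilon)$. That space is the core of the comma $\infty$-category of $l\circ-\colon \mathcal{C}(b,a)\to\mathcal{C}(b,b)$ over $1_b$, and it is not contractible. Contractibility only appears after restricting to the components where $\epsilon$ is a counit, and the reason is a universal property, not horn filling. Take $x=b$ and $h=1_b$ in the adjunction equivalence $\mathcal{C}(x,a)(g,rh)\simeq\mathcal{C}(x,b)(lg,h)$, $\alpha\mapsto \epsilon h\cdot l\alpha$. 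This shows that a counit is a terminal object of that comma. The space of terminal objects of an $\infty$-category is empty or contractible, and counits form a union of its components.

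\textbf{The higher cells are not filled by invertibility.} The outer horns in Riehl--Verity's parental filtration have a whiskered unit or counit as distinguished edge. Already the first one is the triangle $l\xrightarrow{l\eta}lrl\xrightarrow{\epsilon l}l$, whose missing face must moreover be of the form $l\eta'$. Neither $l\eta$ nor $\epsilon l$ is invertible, so Joyal's special-horn filling does not apply. Your claim that ``invertibility is guaranteed by the triangle identities'' has nothing to refer to. What fills these relative horns with contractible choice is again the absolute-lifting property of $\epsilon$ displayed above.

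\textbf{How to repair it.} Factor $L^*$ through the space of functors out of the subcomputad generated by the counit, restricted to the components where $\epsilon$ is a counit.
\begin{itemize}
\item The first map has contractible fibers by Riehl--Verity's uniqueness argument: extensions of $(l,r,\epsilon)$, with $\epsilon$ a counit, to a homotopy coherent adjunction form a contractible space.
\item The second map has empty-or-contractible fibers by the terminal-object argument above.
\end{itemize}
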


\subsection{Colimits of \texorpdfstring{$(\infty,n)$}{(∞, n)}-categories with adjoints have adjoints}\label{subsec:colim_adj}

Note that a $k$-morphism in a colimit of $(\infty,n)$-categories ought to be given as pasting of $k$-morphisms in the categories in the diagram.
Moreover, pastings of $k$-morphisms with adjoints should in turn have adjoints.
In this subsection, we formalize this intuition to prove in \autoref{prop:colim_adj} that colimits of $(\infty,n)$-categories with adjoints have adjoints.
For this purpose, we use the following equivalent definitions of $(\infty,n)$-categories as:
\begin{enumerate}
    \item Rezk's complete Segal $\Theta_n$-spaces \cite{Rez10}, inspired by Joyal's unpublished ideas \cite{Joy97},
    \item Barwick's $n$-fold complete Segal spaces \cite{Bar05}, inspired by Rezk's complete segal spaces (for $n=1$) \cite{Rez01},
\end{enumerate}
In \cite{BS20}, Barwick and Schommer-Pries proved that these two definitions are equivalent.
In \cite{Hau17a}, Haugseng further proved that the two theories coincide prior to univalent completion.
Combining both results, we get the diagram where left (resp. right) adjoints commute with the vertical equivalences, and where the last column is equivalent to $\Cat{n}$.
\begin{equation}\label{diag:theta_delta}\begin{tikzcd}
	{\Psh(\Delta^n)} & {\Seg(\Delta^n)} & {\Seg^{\textnormal{univ}}(\Delta^n)} \\
	& {\Seg(\Theta^n)} & {\Seg^{\textnormal{univ}}(\Theta^n)}
	\arrow[""{name=0, anchor=center, inner sep=0}, "{\mathbb{L}_S}", curve={height=-12pt}, from=1-1, to=1-2]
	\arrow[""{name=1, anchor=center, inner sep=0}, curve={height=-12pt}, hook', from=1-2, to=1-1]
	\arrow[""{name=2, anchor=center, inner sep=0}, "{\mathbb{L}_C^\Delta}", curve={height=-12pt}, from=1-2, to=1-3]
	\arrow["\sim"{marking, allow upside down}, from=1-2, to=2-2]
	\arrow[""{name=3, anchor=center, inner sep=0}, curve={height=-12pt}, hook', from=1-3, to=1-2]
	\arrow["\sim"{marking, allow upside down}, from=1-3, to=2-3]
	\arrow[""{name=4, anchor=center, inner sep=0}, "{\mathbb{L}_C^\Theta}", curve={height=-12pt}, from=2-2, to=2-3]
	\arrow[""{name=5, anchor=center, inner sep=0}, curve={height=-12pt}, hook', from=2-3, to=2-2]
	\arrow["\dashv"{anchor=center, rotate=-90}, draw=none, from=0, to=1]
	\arrow["\dashv"{anchor=center, rotate=-90}, draw=none, from=2, to=3]
	\arrow["\dashv"{anchor=center, rotate=-90}, draw=none, from=4, to=5]
\end{tikzcd}\end{equation}
Note that an explicit formula for the $\Delta^n$-Segalification $\mathbb{L}_S^\Delta$ was established in \cite[Thm. D]{BS24} by Barkan and Steinebrunner following the theory of necklaces developed by Dugger and Spivak \cite{DS11}.
Another formula was established independently for $n=1$ by Ayala and Francis \cite[Sec. $3$]{AF24}.
An explicit formula for $\Theta^n$-univalent completion $\mathbb{L}_C^\Theta$ was established in \cite[Prop.  $2.4$]{AF18}, generalizing the Rezk's original formula \cite{Rez01}.
Using these formul\ae{} and the fact that left adjoints preserve colimits, we can understand explicitly how the $k$-cells of a colimit of categories are built from the $k$-cells of the categories in the diagram.

\begin{construction}[Suspension]\label{const:suspension}
    Recall that we have a functor $\sigma \in \Psh(\Delta^n) \to \Psh(\Delta^{n+1})$ defined as follows:
    \[
        \sigma X_{a_1, \ldots, a_{n+1}} = \{\bullet_-\} \sqcup \bigsqcup_{i = 1}^{a_1} X_{a_2, \ldots, a_{n+1}} \sqcup \{\bullet_+\}
    \]
    where the maps are defined in the obvious way.
    Note that if $X$ is reduced/Segal (in the $i^\textnormal{th}$ direction)/univalent, then $\sigma X$ is reduced/Segal (in the $(i+1)^\textnormal{th}$ direction)/univalent.
    We can iterate this construction and get
    \[
        \sigma^k X_{a_1, \ldots, a_{n+k}} = \left(\partial c_{k}\right)_{a_1, \ldots, a_{k-1}, 0, \ldots, 0} \bigsqcup \left( X_{a_{k+1}, \ldots, a_{n+k}} \times \prod_{i = 1}^{k} [a_i] \right)
    \]
    Note that $\sigma^k c_l = c_{l+k}$.
    We also have an adjunction
    \[\begin{tikzcd}
        {\Cat{n}} & {{}_{\partial c_k\backslash}\Cat{n+k}}
        \arrow[""{name=0, anchor=center, inner sep=0}, "{\sigma^k}"', curve={height=12pt}, hook, from=1-1, to=1-2]
        \arrow[""{name=1, anchor=center, inner sep=0}, "\hom"', curve={height=12pt}, from=1-2, to=1-1]
        \arrow["\dashv"{anchor=center, rotate=90}, draw=none, from=0, to=1]
    \end{tikzcd}\]
    where for $a_{k-1}, b_{k-1} \in \ldots \in a_0 \to b_0 \in \cat{C}$, we denote the corresponding $\hom(\partial c_k \to \cat{C}) = \cat{C}(a_0,b_0)(a_1,b_1)\ldots(a_{k-2}, b_{k-2})$.
\end{construction}

Denote by $\Adj_k = \sigma^{k-1} \Adj$ the free $(\infty, k+1)$-category containing an adjoint $k$-morphism.
We have an immediate corollary of \autoref{thm:riehl_verity}.

\begin{corollary}[Homotopy Uniqueness of (higher) Adjunctions {\cite[Thm. 4.4.11]{RV16}}]\label{cor:riehl_verity}
   Let $1 \leq k < n$ and $\cat{C}\in \Cat{n}$.
   Precomposition by $L,R \in c_k \to \Adj_k$ gives monomorphisms
    \[\begin{tikzcd}
        {[\Adj_k,\mathcal{C}]} && {[c_k, \mathcal{C}]}
        \arrow["{L^*}", curve={height=-6pt}, hook, from=1-1, to=1-3]
        \arrow["{R^*}"', curve={height=6pt}, hook', from=1-1, to=1-3]
    \end{tikzcd}\]
    selecting the connected components of left (resp. right) adjoint $k$-morphisms.
\end{corollary}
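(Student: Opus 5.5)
The plan is to deduce the statement from \autoref{thm:riehl_verity} by using the suspension adjunction of \autoref{const:suspension} to reduce to the case $k=1$, where the walking adjunction is $\Adj$ and the ambient category is an $(\infty,2)$-category.

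First, fix $\cat{C}\in\Cat{n}$ and $1 \leq k < n$. The maps $L,R \in c_k \to \Adj_k = \sigma^{k-1}\Adj$ are the $(k-1)$-fold suspensions of $L,R \in c_1 \to \Adj$. They are maps under $\partial c_{k-1}$, since $\sigma^{k-1}$ lands in ${}_{\partial c_{k-1}\backslash}\Cat{n}$.

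Second, both mapping spaces fibre over the space of maps $\partial c_{k-1}\to\cat{C}$, i.e. over tuples $a_{k-2},b_{k-2}\in\ldots\in a_0\to b_0$. Restriction along $\partial c_{k-1}\to\sigma^{k-1}X$ is compatible with precomposition by $L^*$ (resp. $R^*$). It therefore suffices to show that $L^*$ is a monomorphism fibrewise, and that its image in each fibre consists of left adjoints. A map of spaces over a base that is fibrewise a monomorphism is a monomorphism, since monomorphisms of spaces are exactly inclusions of unions of path components, detected on fibres.

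Third, over a fixed point $\partial c_{k-1}\to\cat{C}$, the adjunction $\sigma^{k-1}\dashv\hom$ identifies the fibre of $[\sigma^{k-1}X,\cat{C}]$ with $[X,\cat{D}]$. Here $\cat{D}=\cat{C}(a_0,b_0)\ldots(a_{k-2},b_{k-2})\in\Cat{n-k+1}$ and $n-k+1\geq 2$. By naturality in $X$, the fibre of $L^*$ becomes $L^* \in [\Adj,\cat{D}]\to[c_1,\cat{D}]$.

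Fourth, we need to apply \autoref{thm:riehl_verity}, which is stated for $(\infty,2)$-categories. Since $\Adj$ and $c_1$ are $(2,2)$-categories, maps into $\cat{D}$ factor through the coreflection $|\cat{D}|_{>2}$, the right adjoint to $\Cat{2}\mono\Cat{n-k+1}$ (iterating the coreflections in the Notation). This gives $[\Adj,\cat{D}]\simeq[\Adj,|\cat{D}|_{>2}]$, and similarly for $c_1$. \autoref{thm:riehl_verity} applied to $|\cat{D}|_{>2}$ then yields a monomorphism onto the components of left adjoints. Being a left adjoint in $|\cat{D}|_{>2}$ is the same as in $\cat{D}$, as both are tested in $\ho_2$, and this matches the definition of a left adjoint $k$-morphism in $\cat{C}$. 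The case of $R$ is symmetric.

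The step I expect to be the main obstacle is the third: checking carefully that the suspension adjunction really identifies these fibres and is natural in the relevant maps, with base points handled correctly. This is where the explicit formula for $\sigma^k$ in \autoref{const:suspension} would be used.
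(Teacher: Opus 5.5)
Your proof is correct. It uses the same two inputs as the paper, the suspension adjunction $\sigma^{k-1}\dashv\hom$ of \autoref{const:suspension} and \autoref{thm:riehl_verity}, but it deploys the adjunction differently.

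The paper argues at the level of objects. \autoref{thm:riehl_verity} says $L,R\colon c_1\to\Adj$ are epimorphisms. Since $\sigma^{k-1}$ is a left adjoint followed by the forgetful functor from a slice, it preserves pushouts, hence the codiagonal characterization of epimorphisms, so $c_k\to\Adj_k$ is again an epimorphism. The description of the image is then handled in one sentence (``adjoint $k$-morphisms are suspensions of adjoint $1$-morphisms'').

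You argue at the level of mapping spaces: you fibre over $[\partial c_{k-1},\cat{C}]$ and identify each fibre of $L^*$ with $L^*\colon[\Adj,\cat{D}]\to[c_1,\cat{D}]$. The paper's version is shorter. Yours makes explicit two things the paper leaves tacit:
\begin{itemize}
\item the image is exactly the components of $k$-morphisms that are left adjoints in $\ho_2$ of their iterated hom, which is literally the paper's definition;
\item how to pass from \autoref{thm:riehl_verity}, stated only for $\Cat{2}$, to targets in $\Cat{n-k+1}$. The paper's route needs the corresponding fact that $\Cat{2}\mono\Cat{n-k+1}$ preserves pushouts, which it does not state.
\end{itemize}

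Two small remarks. First, in the paper's notation $|\cat{D}|_{>2}$ is the reflection: the left adjoint to the inclusion, which inverts all cells of dimension $>2$. Maps into $\cat{D}$ do not factor through it, since the canonical map goes $\cat{D}\to|\cat{D}|_{>2}$. The coreflection you actually describe, the right adjoint, is written $\cat{D}_{\leq 2}$, and with that symbol your fourth step goes through as written.

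Second, the step you flag as the main obstacle is formal. Mapping spaces in ${}_{\partial c_{k-1}\backslash}\Cat{n}$ are the fibres of restriction along $\partial c_{k-1}$. So the adjunction alone gives the fibrewise identification, naturally in $X$, without needing the explicit formula for $\sigma^k$.
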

\begin{proof}
    $\sigma^{k-1}$ is a left adjoint composed with a forgetful from a slice, so it preserves contractible colimits and thus epimorphisms.
    Additionally, the notion of adjoint $k$-morphism is a suspension of that of adjoint $1$-morphisms.
\end{proof}

To formulate the next few lemmas, we now extend the notion of ``having adjoints'' to $n$-fold simplicial spaces.
Note that the homotopy uniqueness results above do not apply to general $n$-fold simplicial spaces, only to Segal and univalent/complete ones.
In other words, $c_k \to \Adj_k$ is not an epimorphism in $\Psh(\Delta^n)$ or $\Seg(\Delta^n)$ for $k < n$.

\begin{definition}[$n$-Fold Simplicial Space with Adjoints]\label{def:not_segat_adj}
    Let $1 \leq k < n$.
    Let $X \inset \Psh(\Delta^n)$ be a reduced $n$-fold simplicial space.
    We say that $X$ has right (resp. left) adjoints for all $k$-cells if for all $c_k \to X$, there exists a (not necessarily unique) lift
\[\begin{tikzcd}
	{c_k} & X \\
	{\Adj_k}
	\arrow[from=1-1, to=1-2]
	\arrow[from=1-1, to=2-1]
	\arrow[dashed, from=2-1, to=1-2]
\end{tikzcd}\]
    where the downward functor is $L$ (resp. $R$).
\end{definition}

In the next three results, we describe how Segalification interacts with having adjoints.
The first lemma below and its immediate corollary are the key technical tools for the inductive argument in \autoref{lem:adj_segalification}.
Recall from \cite[Thm. D]{BS24} that, for $X$ reduced, the Segalification functor can be computed as $\mathbb{L}^\Delta_S = \mathbb{L}^\Delta_{S,n} \circ \ldots \circ \mathbb{L}^\Delta_{S,1}$.

\begin{lemma}\label{lem:suspension_segal}
    Let $1 \leq i$ and $X$ be a $n$-fold Segal space.
    Then $\mathbb{L}^\Delta_{S,i} \left(\sigma^{i}X \bigsqcup_{c_{i-1}}\sigma^{i}X\right)$ is a $(n+i+1)$-fold Segal space.
\end{lemma}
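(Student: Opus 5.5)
The strategy is to write down an explicit candidate for $\mathbb{L}^\Delta_{S,i}\bigl(\sigma^{i}X \sqcup_{c_{i-1}} \sigma^{i}X\bigr)$, check directly that it is Segal in every direction, and then identify it with the $i$-th Segalification. Write $Y = \sigma^{i}X \sqcup_{c_{i-1}}\sigma^{i}X$, where the two copies are glued along the target $(i-1)$-cell of the first and the source $(i-1)$-cell of the second. From the formula for $\sigma^k$ in \autoref{const:suspension}, $Y$ is reduced and consists of two pieces. The first is a "boundary" part depending only on $a_1,\ldots,a_{i-1}$, namely $\partial c_i \sqcup_{c_{i-1}} \partial c_i$. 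The second is a part over the $i$-th direction that looks like the spine $[1]\sqcup_{[0]}[1]$, with fibres $X_{a_{i+1},\ldots}$ over each of the two nondegenerate $i$-cells.

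\textbf{Step 1 (candidate).} Define $Z$ as the "suspension of a category with three objects $0,1,2$". Its hom-objects are $\hom(0,1)=X$, $\hom(1,2)=X$ and $\hom(0,2)=X\times X$, and composition $\hom(0,1)\times\hom(1,2)\to\hom(0,2)$ is the identity. Concretely, the $i$-th direction part of $Z_{a_1,\ldots}$ decomposes over maps $\varphi\colon[a_i]\to[2]$. The fibre over $\varphi$ is $\prod_{j\le i-1}[a_j]$ times a product of copies of $X_{a_{i+1},\ldots}$: one copy for each of the intervals $\{0<1\}$, $\{1<2\}$ that $\varphi$ covers, and a point if $\varphi$ is constant. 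The boundary part is the same as in $Y$. There is an evident map $Y\to Z$.

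\textbf{Step 2 (Segal conditions).}
\begin{itemize}
\item In direction $i$, the Segal condition holds by construction: a simplex $\varphi\colon[a_i]\to[2]$ factors uniquely through its spine, and the fibres are products, since $\hom(0,2)=\hom(0,1)\times\hom(1,2)$.
\item In directions $j>i$, the fibres are finite products of copies of $X$ with the coordinates $a_{i+1},\ldots$ shifted. Since $X$ is Segal and Segal conditions are limit conditions stable under products, these hold fibrewise over each $\varphi$, and hence globally.
\item In directions $j<i$, the suspension coordinates only contribute the factors $[a_j]$ and $\partial c_i$, exactly as for $\sigma^i X$, so we can reuse the remark in \autoref{const:suspension} that suspension preserves Segal-ness.
\end{itemize}
Hence $Z$ is a Segal space of the dimension claimed in the statement.

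\textbf{Step 3 ($Y\to Z$ is the $i$-th Segalification).} I will use the explicit necklace formula of \cite[Thm. D]{BS24} for $\mathbb{L}^\Delta_{S,i}$. Applied to $Y$, it computes each level as a colimit over necklaces in direction $i$ of products of simplices of $Y$. There are two kinds of necklace:
\begin{itemize}
\item necklaces lying in a single copy of $\sigma^iX$, which contribute nothing new because $\sigma^iX$ is already Segal;
\item necklaces spanning both copies, which contribute $X\times X$ over maps $\varphi$ hitting $0,1,2$.
\end{itemize}
The category of necklaces representing a given $\varphi$ should have a terminal object (the spine), so that the colimit collapses to the fibre of $Z$. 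Equivalently, one can check that $Y\to Z$ is local for the $i$-th spine inclusions: it is built from pushouts of $\Delta^1\sqcup_{\Delta^0}\Delta^1 \to \Delta^2$ in direction $i$, tensored with $X$.

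I expect this last identification to be the main obstacle. The difficulty lies in matching the necklace colimit of \cite{BS24} with the product formula in the mixed multisimplicial levels, where the directions $<i$ and $>i$ vary simultaneously. I would first try the cleaner route: exhibit $Y\to Z$ as an iterated pushout of generating $i$-th Segal anodyne maps of the form $\sigma^{i-1}$ applied to $(\Delta^1\sqcup_{\Delta^0}\Delta^1\to\Delta^2)\times X$. Combined with Step 2, this gives $Z\simeq \mathbb{L}^\Delta_{S,i}Y$.
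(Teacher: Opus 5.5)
Your proposal follows the paper's proof. The paper reduces to $i=1$ (Segalification commutes with suspension, which removes your bookkeeping in the directions $<i$), reads off from the necklace formula that $\mathbb{L}^\Delta_{S,1}(\sigma X\sqcup_{c_0}\sigma X)$ is the suspension of the three-object category with $\hom(0,2)=X\times X$, i.e.\ your $Z$, and then observes that this is Segal. Your fibrewise description of $Z$ is in fact the accurate one: it includes the simplices skipping the middle object, which the paper's index set $1\le i<j\le a_1$ omits.

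For Step 3, keep the necklace argument, which is sound as you sketch it. Naturally in the necklace $N$, maps $N\to Y$ with endpoints $0,2$ form $\mathrm{Hom}(N,\Delta^1\sqcup_{\Delta^0}\Delta^1)\times X^{2}$. The identity of the spine is terminal among necklaces over $\Delta^1\sqcup_{\Delta^0}\Delta^1$, so the colimit is $X^2$. Drop the ``cleaner route'', which fails as stated. Pushing $Y$ out along $\sigma^{i-1}$ of $(\Delta^1\sqcup_{\Delta^0}\Delta^1\to\Delta^2)\times X$ puts a single copy of $X$ on the composite edge, and the Segal map over $0\to1\to2$ becomes the diagonal of $X$. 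The result is Segal-equivalent to $Y$ but is neither Segal nor $Z$.
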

\begin{proof}
    It suffices to prove the result for $i = 1$ as Segalification is seen to commute with suspension.
    By \autoref{const:suspension}, we have
    \[
        \left(\sigma X \bigsqcup_{c_{0}}\sigma X\right)_{a_1, \ldots, a_{n+1}} = \{\bullet_0\} \sqcup \bigsqcup_{i = 1}^{a_1} X_{a_2, \ldots, a_{n+1}} \sqcup \{\bullet_1\} \sqcup \bigsqcup_{i = 1}^{a_1} X_{a_2, \ldots, a_{n+1}} \sqcup \{\bullet_2\}
    \]
    By inspection, using the Necklace formula \cite[Thm. A]{BS24}, we get
    \begin{equation*}[rCr]
        \left(\mathbb{L}^\Delta_{S,i} \left(\sigma X \bigsqcup_{c_{0}}\sigma X\right)\right)_{a_1, \ldots, a_{n}} &=& \{\bullet_0\} \sqcup \bigsqcup_{i = 1}^{a_1} X_{a_2, \ldots, a_{n+1}} \sqcup \{\bullet_1\} \sqcup \bigsqcup_{i = 1}^{a_1} X_{a_2, \ldots, a_{n+1}} \sqcup \{\bullet_2\} \\
                && \bigsqcup_{1 \leq i < j \leq a_1} X_{a_2, \ldots, a_{n+1}}^2
    \end{equation*}
    This is easily seen to be Segal.
\end{proof}

\begin{corollary}[Composition of Adjoints]\label{cor:compose_adj}
    Let $1 \leq i < k < n$.
    There are lifts
\[\begin{tikzcd}
	{\Adj^k} & {c_k} && {\Adj^k} & {c_k} \\
	{\mathbb{L}^\Delta_{S,i} \left(\Adj^k \bigsqcup_{c_{i-1}} \Adj^k\right)} & {c_k \bigsqcup_{c_{i-1}} c_k} && {\mathbb{L}^\Delta_{S,k+1} \circ \mathbb{L}^\Delta_{S,k} \left(\Adj^k \bigsqcup_{c_{k-1}} \Adj^k\right)} & {c_k \bigsqcup_{c_{k-1}} c_k}
	\arrow[dashed, from=1-1, to=2-1]
	\arrow["L"', from=1-2, to=1-1]
	\arrow[dashed, from=1-4, to=2-4]
	\arrow["L", from=1-5, to=1-4]
	\arrow[from=2-2, to=1-2]
	\arrow[from=2-2, to=2-1]
	\arrow[from=2-5, to=1-5]
	\arrow[from=2-5, to=2-4]
\end{tikzcd}\]
\end{corollary}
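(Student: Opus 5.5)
The plan is to reduce both lifts to the case of an adjunction between $1$-morphisms by desuspending, and then to use the explicit description of the Segalified pushouts from \autoref{lem:suspension_segal}. Write $\Adj^k = \sigma^{k-1}\Adj$. Both $\sigma$ and the partial Segalifications $\mathbb{L}^\Delta_{S,j}$ commute with suspension, with the index shifted by one, as already used in the proof of \autoref{lem:suspension_segal}. So each square reduces to its instance after stripping off the appropriate number of suspensions. In each case I must produce a map out of $\Adj^k$ whose restriction along $L$ is the composite $k$-cell determined by $c_k \bigsqcup_{c_{i-1}} c_k$, and check that the square commutes. Since the lifts in \autoref{def:not_segat_adj} need not be unique, only existence is required.

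\emph{First square ($i<k$).} Write $\Adj^k = \sigma^{i}X$ with $X = \sigma^{k-1-i}\Adj$, which is a complete $n$-fold Segal space. By the necklace computation in the proof of \autoref{lem:suspension_segal}, $\mathbb{L}^\Delta_{S,i}(\sigma^iX \bigsqcup_{c_{i-1}} \sigma^i X)$ is the $i$-fold suspension of a $1$-fold-suspension-type object. Its hom between the two extreme objects $\bullet_0,\bullet_2$ is exactly $X \times X$, and the composite of the two copies of the cell lands in this summand. The diagonal $\Delta\colon \Adj_{k-i} \to \Adj_{k-i}\times\Adj_{k-i} = X\times X$ sends $L\mapsto (L,L)$, and a product of adjunctions is an adjunction; this is formal, with units and counits taken componentwise. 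Suspending $\Delta$ $i$ times and composing with the inclusion of the summand $\{\bullet_0\}\sqcup (X\times X)\sqcup\{\bullet_2\}$ gives the dashed map. Commutativity of the square is then checked directly on the explicit formula: precomposing with $L$ picks out $(L,L)$, which is the image of the composite cell.

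\emph{Second square ($i=k$).} After desuspending $k-1$ times, we must show the following. In $Y := \mathbb{L}^\Delta_{S,2}\mathbb{L}^\Delta_{S,1}(\Adj \bigsqcup_{c_0} \Adj)$, the composite $1$-morphism $L'\circ L$ extends along $L\colon c_1\to\Adj$. I would first run the necklace formula again to describe $Y$ explicitly. Segalifying in direction $1$ creates the composite $1$-cells. Segalifying in direction $2$ is needed so that the whiskerings $L'\eta R'$, $\epsilon' \circ (L'\epsilon R')$, etc.\ exist as composable $2$-cells. Then $Y$ is a $2$-fold Segal space, and in $\ho_2(Y)$ the classical computation applies: unit $\eta_{tot} = (R\eta' L)\circ\eta$ and counit $\epsilon_{tot} = \epsilon'\circ(L'\epsilon R')$ satisfy the triangle identities. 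It then remains to promote these data to a map $\Adj \to Y$.

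The promotion step is the main obstacle. \autoref{cor:riehl_verity} only applies to complete (univalent) targets, while $Y$ is merely Segal. My first attempt is as follows. Apply \autoref{cor:riehl_verity} to the univalent completion $\mathbb{L}_C(Y)$ to obtain $\Adj \to \mathbb{L}_C(Y)$. Then show that this map factors through $Y$. The key point is that every generating cell of $\Adj$, namely $L,R,\eta,\epsilon$ and the triangle witnesses, is sent to cells already present in the explicit description of $Y$: composites and whiskerings of cells of the two copies of $\Adj$. Completion in this situation only adds equivalences/identifications and no new non-invertible cells, since the objects of $Y$ have no nontrivial invertible morphisms between distinct ones. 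Hence the Schanuel–Street presentation of $\Adj$ by generators and relations, read in the explicit model of $Y$, gives the lift. Should this factorization prove delicate, the fallback is to construct the map $\Adj\to Y$ by hand from the Schanuel–Street combinatorial description of $\Adj$ (via simplicial categories of ordinals). This sends each generating simplex to the corresponding necklace in $Y$, and the Segal conditions in $Y$ supply the required coherences.
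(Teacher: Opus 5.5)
Your first square ($i<k$) is correct, and it takes a different route from the paper. The paper treats both squares the same way: the Segalified pushout is Segal, its only equivalences are identities so it is complete, and \autoref{cor:riehl_verity} then reduces the claim to the classical fact that a composite of left adjoints is a left adjoint in $\ho_2$. Your construction instead suspends the diagonal $\Adj_{k-i}\to\Adj_{k-i}\times\Adj_{k-i}$ and follows it by the inclusion of the $(\bullet_0,\bullet_2)$-hom $X\times X$. That gives an explicit strict lift which needs neither completeness nor Riehl--Verity.

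The second square ($i=k$) is not proved. You correctly locate the difficulty: promoting the $\ho_2(Y)$-level adjunction $L'L\dashv RR'$ to a map $\Adj\to Y$. The argument you give for it does not work.
\begin{itemize}
\item Knowing that the images of $L,R,\eta,\epsilon$ and of the triangle witnesses are cells of $Y$ only lets you lift cells of $\mathbb{L}_C(Y)$ one at a time; that is all \autoref{lem:completion_lifts} provides.
\item This does not produce a map out of $\Adj$. $\Adj$ is not built from these cells by colimits in $\Psh(\Delta^n)$ or $\Seg(\Delta^n)$; the paper stresses that $c_k\to\Adj_k$ is not even an epimorphism there. So a ``generators and relations'' presentation of $\Adj$ cannot be read off in a target you have not shown to be complete.
\item In your fallback, it is likewise not the Segal conditions that would supply the coherences for a map out of the nerve of $\Adj$.
\end{itemize}

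The missing observation, which is exactly the paper's argument, is that no promotion is needed. $Y$ is an $n$-fold simplicial set freely generated by two copies of $\Adj$ glued at an object. $\Adj$ has no non-identity invertible $1$- or $2$-cells, and the free amalgamation creates none, so the only equivalences in $Y$ are identities. Hence $Y$ is already complete, $Y\simeq\mathbb{L}_C(Y)$, and \autoref{cor:riehl_verity} applies to $Y$ itself. Your computation of $(R\eta'L)\circ\eta$ and $\epsilon'\circ(L'\epsilon R')$ in $\ho_2(Y)$ then finishes the proof.

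Your remark that $Y$ has no nontrivial invertible morphisms between distinct objects is the right instinct. It must be strengthened to completeness, including at the level of $2$-cells, and used to identify $Y$ with its completion, not fed into a cell-by-cell lifting argument.
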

\begin{proof}
    By \autoref{lem:suspension_segal} above, $\mathbb{L}^\Delta_{S,i} \left(\Adj^k \bigsqcup_{c_{i-1}} \Adj^k\right)$ is Segal.
    Note that $\mathbb{L}^\Delta_{S,k+1} \circ \mathbb{L}^\Delta_{S,k} \left(\Adj^k \bigsqcup_{c_{k-1}} \Adj^k\right)$ is also automatically Segal by \autoref{const:suspension}.
    The only equivalences in these $n$-fold simplicial sets are the identities, hence they are also complete.
    By \autoref{cor:riehl_verity}, it then suffices to show in classical (strict) $(n,n)$-category theory that an $i^\textnormal{th}$-composition of $k$-adjoints is an adjoint.
    This is a standard result from ordinary category theory.
\end{proof}

Using the two results above, we can now prove this key technical lemma below to handle adjoints in the Segalification of a reduced $n$-fold simplicial space.

\begin{lemma}[Segalification Preserves Adjoints]\label{lem:adj_segalification}
    Let $X$ be a reduced $n$-fold simplicial space that has right (resp. left) adjoints for all $k$-cells.
    Then its Segalification $\mathbb{L}^\Delta_S(X)$ has right (resp. left) adjoints for all $k$-cells.
\end{lemma}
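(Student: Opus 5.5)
We argue by induction along the factorization $\mathbb{L}^\Delta_S = \mathbb{L}^\Delta_{S,n} \circ \ldots \circ \mathbb{L}^\Delta_{S,1}$ from \cite[Thm. D]{BS24}. It suffices to show that each partial Segalification $\mathbb{L}^\Delta_{S,i}$ preserves the property ``has right (resp. left) adjoints for all $k$-cells'' for reduced $n$-fold simplicial spaces that are already Segal in directions $1, \ldots, i-1$. We then apply this step $n$ times. For the step, we use the necklace formula \cite[Thm. A]{BS24}. Every $k$-cell $c_k \to \mathbb{L}^\Delta_{S,i}(X)$ is represented by a necklace: a finite concatenation $c_k \bigsqcup_{c_{i-1}} c_k \bigsqcup_{c_{i-1}} \cdots \bigsqcup_{c_{i-1}} c_k \to X$ of $k$-cells of $X$, glued along $(i-1)$-cells, together with the canonical map from $c_k$ into the $i$-th Segalification of this concatenation. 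Since $X$ has adjoints, each bead $c_k \to X$ lifts along $L$ (resp. $R$) to some $\Adj_k \to X$. Gluing these lifts gives a map $\Adj_k \bigsqcup_{c_{i-1}} \cdots \bigsqcup_{c_{i-1}} \Adj_k \to X$ extending the necklace.

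We apply $\mathbb{L}^\Delta_{S,i}$ to this map and use naturality. This reduces the step to producing a lift of the composite $c_k \to \mathbb{L}^\Delta_{S,i}(c_k \bigsqcup_{c_{i-1}} \cdots \bigsqcup_{c_{i-1}} c_k)$ through $\Adj_k$ into $\mathbb{L}^\Delta_{S,i}(\Adj_k \bigsqcup_{c_{i-1}} \cdots \bigsqcup_{c_{i-1}} \Adj_k)$. This is exactly \autoref{cor:compose_adj}. For $i<k$ we use the left diagram. For $i = k$ we use the right diagram, in which $\mathbb{L}^\Delta_{S,k+1}$ also appears because vertical composition of adjoint $k$-cells needs the higher $(k+1)$-cells (unit and counit) to be composed. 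In that case we group the steps $\mathbb{L}^\Delta_{S,k}$ and $\mathbb{L}^\Delta_{S,k+1}$ together in the induction. For $i > k+1$, Segalification in direction $i$ does not create new $k$-cells. The $k$-cells of $\mathbb{L}^\Delta_{S,i}(X)$ are then the images of those of $X$, and the lifts are simply pushed forward along the unit $X \to \mathbb{L}^\Delta_{S,i}(X)$.

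Necklaces with more than two beads are handled by iterating the binary case. Since \autoref{lem:suspension_segal} shows that the binary glued Segalification is already Segal, an $m$-bead necklace can be obtained by gluing two shorter necklaces. An induction on $m$ then produces the lift, again using \autoref{cor:compose_adj} at each stage.

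The main obstacle is the necklace description itself. We must check that every $k$-cell of $\mathbb{L}^\Delta_{S,i}(X)$ factors, up to the relevant identification, through the Segalification of a finite necklace of $k$-cells of $X$. We must also check that this factorization is compatible with the lifting diagrams of \autoref{def:not_segat_adj}, which are only required to exist pointwise and not coherently. We expect that choosing lifts bead-by-bead is harmless precisely because no uniqueness is demanded at the level of simplicial spaces. The remaining care is to verify that the intermediate stages (Segal in directions below $i$, still reduced) are preserved, so that the induction on $i$ can proceed.
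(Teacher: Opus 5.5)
Your proposal is correct and follows essentially the same route as the paper. You split $\mathbb{L}^\Delta_S$ into directional steps, grouping $\mathbb{L}^\Delta_{S,k+1}\circ\mathbb{L}^\Delta_{S,k}$; you use the necklace formula to write each $k$-cell as a composite of $k$-cells of $X$ (reducing to two beads), lift each bead to $\Adj_k$, and conclude by \autoref{cor:compose_adj} and naturality of $\mathbb{L}^\Delta_{S,i}$, while also making explicit the steps $i>k+1$ that the paper leaves implicit.
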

\begin{proof}
    Using $\mathbb{L}^\Delta_S = \mathbb{L}^\Delta_{S,n} \circ \ldots \circ \mathbb{L}^\Delta_{S,1}$ for $X$ reduced, and using \cite[Lem. $2.15.a$]{BS24} which ensures $X$ remains reduced through these steps,
    one can observe that it is sufficient to prove the result separately:
    for $\mathbb{L}^\Delta_{S,i}$ with $1 \leq i \leq k-1$,
    and for $\mathbb{L}^\Delta_{S,k+1} \circ \mathbb{L}^\Delta_{S,k}$.

    \begin{enumerate}
        \item \textbf{Case $i < k$:}
        Fix $c_k = \Delta_{1,\ldots, 1, 0, \ldots, 0} \to \mathbb{L}_{S,i}^\Delta(X)$. By the Necklace formula \cite[Thm. A]{BS24} and by reducedness, we can find $m \geq 1$ and a diagram
\[\begin{tikzcd}
	{c_k} & {\Delta_{1,\ldots, 1, m, 1, \ldots, 1, 0, \ldots, 0}} & {\mathbb{L}_{S,i}^\Delta(X)} \\
	& {c_k \bigsqcup_{c_{i-1}} \ldots \bigsqcup_{c_{i-1}} c_k} & X
	\arrow[from=1-1, to=1-2]
	\arrow[from=1-2, to=1-3]
	\arrow["{\simeq_i}"', from=2-2, to=1-2]
	\arrow[from=2-2, to=2-3]
	\arrow[from=2-3, to=1-3]
\end{tikzcd}\]
    where the arrows marked by $\simeq_i$ are Segal equivalences for the $i^\textnormal{th}$-coordinate Segal conditions.
    Without loss of generality, it suffices to consider the case $m=2$.
    Using the universal property of $\mathbb{L}_{S,i}$ and using \autoref{cor:compose_adj}, we get a diagram
\[\begin{tikzcd}
	{c_k} &&& {\Adj_k} \\
	{\Delta_{1,\ldots, 1, 2, 1, \ldots, 1, 0, \ldots, 0}} && {\mathbb{L}_{S,i}\left(\Adj_k \bigsqcup_{c_i} \Adj_k\right)} \\
	{c_k \bigsqcup_{c_i} c_k} && {\Adj_k \bigsqcup_{c_i} \Adj_k} & {\mathbb{L}_{S,i}^\Delta(X)} \\
	&&& X
	\arrow["L", from=1-1, to=1-4]
	\arrow[from=1-1, to=2-1]
	\arrow["{\ref{cor:compose_adj}}"', from=1-4, to=2-3]
	\arrow[dashed, from=1-4, to=3-4]
	\arrow[from=2-1, to=2-3]
	\arrow[from=2-1, to=3-4]
	\arrow[dashed, from=2-3, to=3-4]
	\arrow["{\simeq_i}"', from=3-1, to=2-1]
	\arrow["{L \bigsqcup_{c_i} L}"{pos=0.8}, from=3-1, to=3-3]
	\arrow[from=3-1, to=4-4]
	\arrow["{\simeq_i}"', from=3-3, to=2-3]
	\arrow[dashed, from=3-3, to=4-4]
	\arrow["{\simeq_i}"', from=4-4, to=3-4]
\end{tikzcd}\]
    We want to show the top pentagon commutes.
    Because every other polytope commutes in the diagram, we have the desired lift.

        \item \textbf{Case $i = k$:} Fix $c_k = \Delta_{1,\ldots, 1, 0, \ldots, 0} \to \mathbb{L}^\Delta_{S,k+1} \circ \mathbb{L}^\Delta_{S,k}(X)$.
        Because $\mathbb{L}^\Delta_{S,k+1}$ doesn't affect the space of $k$-morphisms, we can factor this map through $c_k = \Delta_{1,\ldots, 1, 0, \ldots, 0} \to \mathbb{L}_{S,k}^\Delta(X)$.
        We then reproduce the same argument, keeping the first diagram identical, and replacing $\mathbb{L}_{S,i}^\Delta$ by $\mathbb{L}_{S,k+1}^\Delta \circ \mathbb{L}_{S,k}^\Delta$ everywhere in the second diagram.
    \end{enumerate}
\end{proof}

Sometimes, in situations such as \autoref{prop:free_full_adj}, one needs to show that the Segalification of a reduced $n$-fold simplicial space $X$ has adjoints even though $X$ itself does not strictly speaking have adjoints according to \autoref{def:not_segat_adj}.
Indeed, if any $k$-morphism of $X$ can be witnessed as a composition (or ``pasting'') of $k$-morphisms which do have adjoints, then the Segalification of $X$ will have adjoints for all $k$-morphisms.
For technical reasons, proving this for a general $k$ is impractical as the notion of ``composition'' can become quite complicated, and the notion of $n$-fold simplicial space doesn't quite capture the stability of this notion: a composition of compositions is not easily seen as a composition itself.
For the sake of this paper, it suffices to treat $k=1$, which is significantly more elementary.

\begin{lemma}[Partial Adjoints in Segalification]\label{lem:partial_adj_segal}
    Let $X$ be a reduced $n$-fold simplicial space.
    Let $\cat{I} \subset \pi_0 X_{1, 0, \ldots, 0}$ be the minimal subset such that:
    \begin{enumerate}
        \item if $f \in c_1 \to X$ has a right (resp. left) adjoint, as defined in \autoref{def:not_segat_adj}, $f$ in $\pi_0 X_{1,0, \ldots, 0}$ lies in $\cat{I}$,
        \item if $f \in c_1 \to X$ can be written as a composition of morphisms in $\cat{I}$, then $f$ lies in $\cat{I}$:
        in other words, if we have $\Delta_{r,0\ldots, 0} \epi c_1$ preserving end-points,
        and, if for all consecutive inclusions $c_1 \mono \Delta_{r,0\ldots, 0}$, the composition lies in $\cat{I}$,
        then $f$ lies in $\cat{I}$.
\[\begin{tikzcd}
	{c_1} \\
	{\Delta_{r,0\ldots, 0}} & X \\
	{c_1}
	\arrow["\forall"', from=1-1, to=2-1]
	\arrow["{\inset \cat{I}}", from=1-1, to=2-2]
	\arrow[from=2-1, to=2-2]
	\arrow[from=2-1, to=3-1]
	\arrow["f"', from=3-1, to=2-2]
\end{tikzcd}\]
    \end{enumerate}
    Suppose $\cat{I} = \pi_0 X_{1,0, \ldots, 0}$ is the whole set.
    Then $\mathbb{L}^\Delta_S(X)$ has right (resp. left) adjoints for all $1$-cells.
\end{lemma}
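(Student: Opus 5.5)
The plan is to reduce to the situation of \autoref{lem:adj_segalification}, organised as an induction along the minimal closure defining $\cat{I}$. Write $Y = \mathbb{L}^\Delta_S(X)$ and let $\cat{J} \subset \pi_0 X_{1,0,\ldots,0}$ be the set of $1$-cells $f$ of $X$ whose image under the unit $X \to Y$ admits a right (resp. left) adjoint in $Y$. The quantifier over all of $\pi_0 X_{1,0,\ldots,0}$ is legitimate because, by \autoref{cor:riehl_verity}, the lifting property is a property of connected components once we are in the Segal and complete world. Since $\cat{I}$ is the minimal subset satisfying the two closure conditions, it suffices to show that $\cat{J}$ satisfies them as well; then $\cat{I} \subset \cat{J}$, so every $1$-cell coming from $X$ has an adjoint in $Y$.

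For closure condition (1), an adjunction lift $\Adj_1 \to X$ postcomposes to $\Adj_1 \to X \to Y$, which is immediate. For closure condition (2), we are given $\Delta_{r,0,\ldots,0} \to X$ whose consecutive edges lie in $\cat{J}$, with composite $f$. Map it into $Y$. Since $Y$ is Segal, $\Delta_{r,0,\ldots,0} \to Y$ is determined by the spine $c_1 \sqcup_{c_0} \cdots \sqcup_{c_0} c_1$. Each edge extends to $\Adj_1 \to Y$, so the spine extends to $\Adj_1 \sqcup_{c_0} \cdots \sqcup_{c_0} \Adj_1 \to Y$. This in turn factors through $\mathbb{L}^\Delta_{S,1}\bigl(\Adj_1 \sqcup_{c_0} \cdots \sqcup_{c_0} \Adj_1\bigr)$ by the universal property. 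By \autoref{cor:compose_adj} (the case $i=k=1$, i.e.\ its second lift, iterated $r-1$ times, or done at once using \autoref{lem:suspension_segal} for $r$ pieces), the long diagonal $c_1$ of that Segalification extends to $\Adj_1$. Composing gives an adjoint for $f$ in $Y$. The same pentagon-chasing argument as in the proof of \autoref{lem:adj_segalification} shows that the resulting $c_1 \to Y$ agrees with the image of $f$.

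Finally, every $1$-cell of $Y$ is reached by the necklace formula \cite[Thm. A]{BS24}. Since $X$ is reduced and Segalification in directions $\geq 2$ does not change $1$-cells, a $1$-cell of $Y$ is represented by a necklace, which here is just a chain $c_1 \sqcup_{c_0} \cdots \sqcup_{c_0} c_1 \to X$, composed in $Y$. Each bead lies in $\cat{I} = \pi_0 X_{1,0,\ldots,0} \subset \cat{J}$, so the argument of the previous paragraph, applied with spine maps that now need not extend to $\Delta_{r,0,\ldots,0}$ in $X$, gives the adjoint. (Univalent completion, if wanted, is handled as usual, since equivalences have adjoints.)

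The main obstacle is the compatibility check in the second paragraph. We need to identify the composite $c_1 \to \mathbb{L}^\Delta_{S,1}(\Adj_1 \sqcup \cdots) \to Y$ with the given $f$, rather than merely some composite of the edges. This requires that the map $\Delta_{r,0,\ldots,0} \to Y$ coming from $X$ coincides with the one determined by the spine, which uses the uniqueness of Segal fillers in $Y$. I would handle this exactly as in \autoref{lem:adj_segalification}, checking that all other cells of the analogous diagram commute.
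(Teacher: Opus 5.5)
Your proposal is correct and is essentially the paper's argument. The paper's proof only says that the diagram from the case $i=k$ of \autoref{lem:adj_segalification} shows every $1$-cell in $\cat{I}$ acquires an adjoint in $\mathbb{L}^\Delta_S(X)$; your closure argument with $\cat{J}$, followed by the necklace reduction to chains, is a faithful expansion of that sketch.

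There are two harmless imprecisions. First, the second lift of \autoref{cor:compose_adj} lands in $\mathbb{L}^\Delta_{S,2}\circ\mathbb{L}^\Delta_{S,1}(\cdots)$ rather than $\mathbb{L}^\Delta_{S,1}(\cdots)$; this is fine since $Y$ is Segal in every direction. Second, admitting a lift being a property of path components of $Y_{1,0,\ldots,0}$ holds for any simplicial space and needs no appeal to \autoref{cor:riehl_verity}, which would not apply to the possibly non-complete $Y$ anyway.
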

\begin{proof}
    The proof is almost identical to the second part of the proof above.
    One can simply observe with the same diagram that any $1$-cell in $\cat{I}$ has an adjoint in the Segalification. 
\end{proof}

Recall that our goal here is to show that a colimit of $(\infty, n)$-categories with adjoints also has adjoints.
As explained in the beginning of the section, this requires to show both a compatibility with Segalification (as done in the results above) and with completion, which is done below.

\begin{lemma}[Completion Preserves Adjoints]\label{lem:completion_lifts}
    Let $X$ be an $n$-fold Segal space.
    Let $c_k \to \mathbb{L}_C X$ be a $k$-cell in the completion of $X$.
    There exists a $k$-cell of $c_k \to X$ factoring $c_k \to \mathbb{L}_C X$.
\end{lemma}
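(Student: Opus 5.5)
The plan is to use the explicit formula for univalent completion from \cite[Prop. $2.4$]{AF18}, transported along the vertical equivalences of \eqref{diag:theta_delta}. We can work either with $\Theta_n$-spaces or $n$-fold Segal spaces, since completion commutes with these equivalences. The formula says that $\mathbb{L}_C X$ is the geometric realization (a colimit over $\Delta^{\mathrm{op}}$) of a simplicial object built from $X$ by tensoring with the walking equivalences. Concretely, $(\mathbb{L}_C X)(\theta) \simeq \left|\Map(\theta \otimes E^\bullet, X)\right|$ for the appropriate cosimplicial object $E^\bullet$ of free-living (coherent) equivalences, or a levelwise variant in each direction. The key structural point is that the zeroth stage of this realization is just $X(\theta)$ itself. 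So the canonical map $X(\theta) \to (\mathbb{L}_C X)(\theta)$ is the inclusion of $0$-simplices into a geometric realization, and that inclusion is surjective on $\pi_0$.

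The proof would then run as follows. First, evaluate at $\theta = c_k$. A $k$-cell $c_k \to \mathbb{L}_C X$ is a point of $(\mathbb{L}_C X)_{1,\ldots,1,0,\ldots,0}$ (or of $(\mathbb{L}_C X)(c_k)$ in the $\Theta$-model). Second, use that for any simplicial space $Y_\bullet$ the map $Y_0 \to |Y_\bullet|$ is $\pi_0$-surjective, because $\pi_0|Y_\bullet|$ is the coequalizer of $\pi_0 Y_1 \rightrightarrows \pi_0 Y_0$. From this, the given point lies in the same path component as the image of some point of $X(c_k)$. Third, transport along that path to conclude that the given $k$-cell is equivalent, as a map $c_k \to \mathbb{L}_C X$, to the composite $c_k \to X \to \mathbb{L}_C X$. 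This gives the factoration up to homotopy, which is all that is meant in the $\infty$-categorical setting.

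The main obstacle is making sure the formula really has the shape ``geometric realization of a simplicial space whose $0$-th term is $X(c_k)$'' after evaluating at $c_k$. Completion in general involves iteration in all $n$ directions, and the AF18 formula is stated for $\Theta_n$-spaces. I would address this in three steps:
\begin{enumerate}
    \item Work directly in $\Seg(\Theta^n)$ and note that the AF18 formula is computed pointwise in $\theta$, with the zero-th term given by $\Map(\theta, X)$.
    \item Check that geometric realizations in $\Psh(\Theta_n)$ are computed objectwise.
    \item Observe that, if the completion is presented as a transfinite or iterated composite of such realizations (one per direction), then surjectivity on $\pi_0$ of $X(c_k) \to (\mathbb{L}_C X)(c_k)$ is preserved under composition and filtered colimits. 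Filtered colimits of spaces are $\pi_0$-surjective from the stages.
\end{enumerate}
Together these give $\pi_0$-surjectivity of $X(c_k) \to (\mathbb{L}_C X)(c_k)$, which is exactly the claimed statement.
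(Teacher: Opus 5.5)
Your argument is correct and is essentially the paper's proof. The paper also uses the explicit completion formula of \cite[Prop. $2.4$]{AF18}: a point of $[c_k,\mathbb{L}_C X]$ comes, up to a path, from some term of the colimit, and is then restricted to the base term $[c_k,X]$. The one difference is that this formula is a single colimit indexed by $\Theta_n$, namely $[c_k,\mathbb{L}_C X]\simeq\Colim_{T}[c_k\times E(T),X]$ with $E(T)$ the free strict $n$-groupoid on $T$, rather than a $\Delta^{\mathrm{op}}$-realization. So your $\pi_0$-coequalizer step becomes restriction along a vertex $E(\bullet)\to E(T)$, and the contingency in your step 3 about iterated or levelwise completions is not needed.
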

\begin{proof}
    Note that by \eqref{diag:theta_delta}, one can prove this result for $\Theta^n$-spaces instead of $n$-fold Segal spaces.
    By \cite[Prop. $2.4$]{AF18}, we have an explicit formula for the univalent completion:
    \[
        \left[c_k, \mathbb{L}_C X\right] = \Colim_{\Theta_n} \left[c_k \times E(-), X \right]
    \]
    where $E \in \Theta_n \to \Psh(\Theta_n)$ sends $T \inset \Theta_n$ to the free strict $n$-groupoid generated by $T$ (see \cite[Def. $2.1$]{AF18} for more details).
    In particular, for any $c_k \to \mathbb{L}_C X$, we have a factorization
\[\begin{tikzcd}
	& {c_k} & {\mathbb{L}_CX} \\
	{c_k \times E(\bullet)} & {c_k \times E(T)} & X
	\arrow[from=1-2, to=1-3]
	\arrow[equals, from=2-1, to=1-2]
	\arrow["{\simeq_C}"', from=2-1, to=2-2]
	\arrow["{\simeq_C}", from=2-2, to=1-2]
	\arrow[from=2-2, to=2-3]
	\arrow["{\simeq_C}"', from=2-3, to=1-3]
\end{tikzcd}\]
\end{proof}

We can now finally prove the main result of this subsection:

\begin{proposition}[Colimits have Adjoints]\label{prop:colim_adj}
    Let $F \in I \to \Cat{n}$ be a diagram of $(\infty,n)$-categories which all admit right (resp. left) adjoints for all $k$-morphisms.
    Then so does $\Colim F$.
\end{proposition}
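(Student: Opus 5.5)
We compute $\Colim F$ through the factorization \eqref{diag:theta_delta}. Model each $F(i)$ as a complete $n$-fold Segal space. The colimit in $\Cat{n}$ is then
\[
    \Colim F \simeq \mathbb{L}^\Delta_C \, \mathbb{L}^\Delta_S \bigl( X \bigr), \qquad X \coloneqq \Colim^{\Psh(\Delta^n)}_{i \inset I} F(i),
\]
because $\mathbb{L}^\Delta_C \circ \mathbb{L}^\Delta_S$ is a left adjoint to the inclusion, so it preserves colimits and restricts to the identity on $\Seg^{\textnormal{univ}}(\Delta^n)$. The argument therefore reduces to three steps: show that $X$ has right (resp. left) adjoints for all $k$-cells in the sense of \autoref{def:not_segat_adj}, then apply \autoref{lem:adj_segalification}, and then pass through completion using \autoref{lem:completion_lifts}.

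\textbf{Step 1: the presheaf colimit.} Colimits in $\Psh(\Delta^n)$ are computed levelwise. Hence any cell $c_k = \Delta_{1,\ldots,1,0,\ldots,0} \to X$, which is a point of $X_{1,\ldots,1,0,\ldots,0} = \Colim_i F(i)_{1,\ldots,1,0,\ldots,0}$, lifts up to homotopy to a point of some single $F(i)_{1,\ldots,1,0,\ldots,0}$, because $\pi_0$ of a colimit of spaces is a quotient of $\coprod_i \pi_0$. We then need reducedness of $X$ so that \autoref{lem:adj_segalification} applies. If the $F(i)$ are reduced, $X$ need not be: the levels $X_{0,a_2,\ldots}$ are colimits of discrete-direction spaces. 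We would first replace $X$ by its reduced quotient, which does not change $\mathbb{L}_S^\Delta$. Alternatively we would argue that reducedness is preserved by the colimit once we work with the reduced model for each $F(i)$; we would check this with \cite[Lem. $2.15$]{BS24}. Since $F(i)$ is an honest $(\infty,n)$-category with adjoints, \autoref{cor:riehl_verity} gives an extension $\Adj_k \to F(i)$ of the lifted cell. Composing with $F(i) \to X$ yields the required (non-unique) lift $\Adj_k \to X$ through $L$ (resp. $R$).

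\textbf{Step 2: Segalification.} By \autoref{lem:adj_segalification}, $Y \coloneqq \mathbb{L}^\Delta_S(X)$ has right (resp. left) adjoints for all $k$-cells.

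\textbf{Step 3: completion.} Let $f \in c_k \to \mathbb{L}^\Delta_C Y \simeq \Colim F$. By \autoref{lem:completion_lifts}, $f$ factors as $c_k \to Y \to \mathbb{L}_C Y$. Step 2 extends $c_k \to Y$ along $L$ to $\Adj_k \to Y$, and composing with $Y \to \mathbb{L}_C Y$ gives $\Adj_k \to \Colim F$ extending $f$. Since $\Colim F$ is complete Segal, \autoref{cor:riehl_verity} identifies such extensions with $f$ being a left (resp. right) adjoint.

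We expect the main obstacle to be the bookkeeping in Step 1 around reducedness and the $\Delta$ versus $\Theta$ models. In particular, \autoref{lem:adj_segalification} requires a reduced input, and Step 3 needs \autoref{lem:completion_lifts} to transfer to $n$-fold Segal spaces via \eqref{diag:theta_delta}. A secondary point is that the factorization in \autoref{lem:completion_lifts} holds only up to the equivalence $\simeq_C$. This does not matter here, because being an adjoint is invariant under equivalence of $k$-cells.
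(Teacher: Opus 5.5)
Your proposal is correct and follows the paper's proof essentially step for step. You compute $\Colim F$ as $\mathbb{L}_C \circ \mathbb{L}^\Delta_S$ of the levelwise colimit in $\Psh(\Delta^n)$, then check that this colimit has the non-unique lifting property of \autoref{def:not_segat_adj} (which you usefully spell out via $\pi_0$-surjectivity, where the paper just says ``by definition''), and then apply \autoref{lem:adj_segalification} and \autoref{lem:completion_lifts}.

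Your worry that $X$ might fail to be reduced is unfounded, though. Reducedness only requires certain degeneracy maps between levels to be equivalences, and levelwise colimits of equivalences are equivalences, so $X$ is automatically reduced and no reduced quotient is needed.
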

\begin{proof}
    By \eqref{diag:theta_delta}, one can compute the colimit as $\mathbb{L}_C \circ \mathbb{L}_S^\Delta \left(\Colim F\right)$ where the colimit is computed in $\Psh(\Delta^n)$.
    By definition, the collection of simplicial spaces with right (resp. left) adjoints for all $k$-morphisms is closed under colimits in $\Psh(\Delta^n)$.
    By \autoref{lem:adj_segalification}, $\mathbb{L}_S^\Delta \left(\Colim F\right)$ has right (resp. left) adjoints for all $k$-morphisms.
    Fix $c_k \to \mathbb{L}_C \circ \mathbb{L}_S^\Delta \left(\Colim F\right)$ a $k$-morphism.
    By \autoref{lem:completion_lifts}, we have a factorization through $c_k \to \mathbb{L}_S^\Delta \left(\Colim F\right)$ which then has an adjoint.
\end{proof}

\subsection{Freely Adding Adjoints and Subcategories with Adjoints}\label{subsec:Free_Sub}

Using the results of the previous subsection, we now establish some basic ``folklore'' results about the $(\infty, n)$-categories with adjoints, which we define below. 
The most important result of this section is \autoref{prop:Free_Sub_exist}.

\begin{definition}[Categories with Adjoints]
    Let $S \subset \{1,\ldots , n\}$ and $\A \in \Cat{n+1}$.
    We say that $\A$ has $S$-adjoints if, for all $k \inset S$ and all $c_k \to \A$, we have lifts
    \[\begin{tikzcd}
        {\Adj_k} \\
        {c_k} & \A \\
        {\Adj_k}
        \arrow[dashed, from=1-1, to=2-2]
        \arrow["L", from=2-1, to=1-1]
        \arrow[from=2-1, to=2-2]
        \arrow["R"', from=2-1, to=3-1]
        \arrow[dashed, from=3-1, to=2-2]
    \end{tikzcd}\]
    Denote by $\Cat{n+1}^{S\textnormal{-adj}} \mono \Cat{n+1}$ the full subcategory of $(\infty,n+1)$-categories with $S$-adjoints.
\end{definition}

Note that by the \autoref{cor:riehl_verity}, these lifts are unique if they exist.
Before we head into the proof of \autoref{prop:Free_Sub_exist}, we observe a couple of small results.
Note that by \autoref{cor:riehl_verity}, we find:

\begin{corollary}\label{cor:adj_fin_colim}
    Let $k \geq 0$.
    One can write $\Adj_k$ as a finite colimit diagram of the $c_i$'s.
\end{corollary}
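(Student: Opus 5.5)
The plan is to reduce to the case $k=1$, exhibit $\Adj$ explicitly as a finite colimit of cells, and then suspend.

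\textbf{Reduction to $k=1$.} Since $\Adj_k = \sigma^{k-1}\Adj$, it suffices to treat $\Adj = \Adj_1$. The functor $\sigma^{k-1}$ is a left adjoint followed by the forgetful functor out of the slice under $\partial c_{k-1}$. It therefore preserves pushouts and other contractible colimits, and it sends $c_i$ to $c_{i+k-1}$. Suspending a finite colimit presentation of $\Adj$ indexed by a contractible (for instance, connected and finite) diagram thus gives one for $\Adj_k$. If the diagram is not contractible, I will add cone points through $c_0$: the copies of $\sigma^{k-1}c_0 = c_{k-1}$ glue correctly to $\partial c_{k-1}$. The degenerate case $k=0$ will be handled separately, by whatever convention the paper uses for $\Adj_0$.

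\textbf{Presenting $\Adj$.} Build the pushout
\[
P = c_1 \sqcup_{\partial c_1} c_1 \cup c_2 \cup c_2
\]
as follows. Start with the free $1$-category on the two objects $+$ and $-$ and the arrows $L\colon + \to -$ and $R\colon - \to +$, obtained by gluing two copies of $c_1$ along $\partial c_1$ with endpoints swapped. Then attach two $2$-cells $c_2$ along their boundaries $\partial c_2$:
\begin{itemize}
\item $\eta\colon \mathrm{id}_+ \Rightarrow RL$, attached along the map $\partial c_2 \to$ (composite) that requires the Segal composite $RL$;
\item $\epsilon\colon LR \Rightarrow \mathrm{id}_-$, attached in the same way.
\end{itemize}
Then attach two $3$-cells $c_3$, or two $2$-cells to be equalized, witnessing the triangle identities $(\epsilon L)(L\eta) = \mathrm{id}_L$ and $(R\epsilon)(\eta R) = \mathrm{id}_R$.

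The colimit should be taken in $\Cat{2}$, not in $\Cat{3}$, so that the triangle identities are imposed as equalities. Concretely, glue in $c_2$'s between the two parallel $2$-morphisms $\mathrm{id}_L$ and the composite; after $\ho$/truncation to $2$-categories these become invertible. An alternative is to glue along $c_2 \sqcup_{\partial c_2} c_2 \to c_2$, which makes each pair equal. Either way the result is a finite colimit of $c_i$'s.

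\textbf{Identifying the colimit with $\Adj$.} The resulting $(\infty,2)$-category $Q$ comes with a map $c_1 \xrightarrow{L} Q$. Maps $Q \to \cat{C}$ are exactly the data of a $1$-morphism together with a unit and counit satisfying the triangle identities up to homotopy, that is, the data of an adjunction in $\ho_2$. By \autoref{cor:riehl_verity}, $[\Adj,\cat{C}] \to [c_1,\cat{C}]$ is the inclusion of the components of left adjoints. The same holds for $[Q,\cat{C}]$ on $\pi_0$.

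To get the actual equivalence, I will use the Riehl--Verity statement in its stronger form: the space of extensions of a left adjoint to an adjunction datum of \emph{this} specific shape is contractible. Riehl--Verity prove that the spaces of ``half-adjoint'' data of the form $(R, \eta, \epsilon, \text{one triangle identity})$ are contractible. The key adjustment is therefore to impose only one triangle identity in $Q$, i.e.\ to drop the $R$-triangle. With that change, the restriction map $[Q,\cat{C}] \to [c_1,\cat{C}]$ is a monomorphism with the same image as the one from $\Adj$. Yoneda then gives $Q \simeq \Adj$, compatibly with $L$.

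\textbf{Main obstacle.} The hard step is exactly this contractibility, which requires matching the pushout $Q$ with Riehl--Verity's parental subcomputads. If citing it directly proves awkward, I would instead verify that the map $Q \to \Adj$ is an equivalence on $\ho_2$ of the relevant mapping categories, via the Schanuel--Street presentation. Since the coherence data above dimension $2$ is determined by \autoref{thm:riehl_verity}, this would suffice.
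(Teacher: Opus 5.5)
\textbf{The gap is in the final step.} Your reduction to $k=1$ via $\sigma^{k-1}$ is fine. You are also right to worry about the version with both triangle identities, a point the paper's one-line sketch passes over. In $\Cat{2}$, gluing a $c_3$ (equivalently, pushing out along a fold map $c_2\sqcup_{\partial c_2}c_2\to c_2$) adds a \emph{path} between $2$-morphisms, not an equality. So once $(R,\eta,\epsilon,\alpha)$ is fixed, the second witness $\beta$ ranges over a torsor for $\Omega_{1_R}$ of the space of $2$-morphisms $R\Rightarrow R$, and this is not contractible in general.

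Your fix does not work, though. Riehl--Verity's contractibility for half-adjoint data $(R,\eta,\epsilon,\alpha)$ only applies to data that already represent an adjunction in the homotopy $2$-category, i.e.\ for which the \emph{other} triangle identity holds up to homotopy. That is a property, and no colimit can impose it. Without it, $[Q,\cat{C}]\to[c_1,\cat{C}]$ is not a monomorphism. For instance, take the one-object $2$-category with hom-category $\mathrm{Vect}_F$, and set $L=F$, $R=F^2$, $\eta(1)=e_1$, $\epsilon(e_1)=1$, $\epsilon(e_2)=0$. Then $\epsilon L\circ L\eta=1_L$, but $R\epsilon\circ\eta R$ is the projection onto $e_1$. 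So the fibre over $L=F$ contains this point as well as the genuine adjunction with $R=F$, and the two lie in different components. Hence $Q\not\simeq\Adj$ and the Yoneda step fails. Your fallback of comparing on $\ho_2$ fails too: with one triangle identity, $\ho_2(Q)$ is not the Schanuel--Street $2$-category. With both identities, $\ho_2$ cannot see the extra loops that $\beta$ introduces.

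\textbf{The missing idea is one more coherence cell.} Keep both witnesses $\alpha\colon\epsilon L\circ L\eta\simeq 1_L$ and $\beta\colon R\epsilon\circ\eta R\simeq 1_R$. Then attach one further cell: in $\Cat{2}$, the image of $\partial c_4\to c_4$, which is again a finite colimit of $c_i$'s. Use it to null-homotope the swallowtail loop at $\eta$,
$$\eta\simeq(R\epsilon\circ\eta R)L\circ\eta\simeq R(\epsilon L\circ L\eta)\circ\eta\simeq\eta,$$
built from $\beta^{-1}$, the interchange law and $\alpha$. Over a fixed left adjoint $L$, the half-adjoint data underlying an adjunction form a contractible space. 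Over that space, the pairs ($\beta$, null-homotopy $\gamma$) form the fibre over the constant loop of a map sending paths from $R\epsilon\circ\eta R$ to $1_R$ to loops at $\eta$. This map is an equivalence, because $\theta\mapsto\theta L\circ\eta$ is the adjunction equivalence from $2$-morphisms $R\Rightarrow R$ to $2$-morphisms $1_+\Rightarrow RL$. So the fibres over left adjoints are contractible, and since both triangle identities are imposed, the image is exactly the left adjoints. Your Yoneda argument then goes through.

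Presenting everything with $c_0,c_1,c_2$ and fold maps also settles your concern about the ambient category. The inclusion $\Cat{2}\subset\Cat{m}$ is a left adjoint, so these colimits are the same in every $\Cat{m}$.
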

\begin{proof}
    Existence of $L, R, \eta, \epsilon$ and the commutativity of the triangle identities can easily be encoded by some finite diagram.
    \autoref{cor:riehl_verity} then shows that the canonical cocone on $\Adj_k$ that one gets is colimiting.
\end{proof}

Using the results above, and our main result of the previous subsection (\autoref{prop:colim_adj}), we can show the existence of the free categories and full subcategories with adjoints.

\begin{proposition}[Free and Sub-Categories with Adjoints]\label{prop:Free_Sub_exist}
    Let $n \geq 0$ and $S \subset \{1,\ldots , n\}$.
    The inclusion $\Cat{n+1}^{S\textnormal{-adj}} \mono \Cat{n+1}$ fits into a bireflection between locally finitely-presentable categories:
\[\begin{tikzcd}
	{\Cat{n+1}^{S\textnormal{-adj}}} && {\Cat{n+1}}
	\arrow[""{name=0, anchor=center, inner sep=0}, hook, from=1-1, to=1-3]
	\arrow[""{name=1, anchor=center, inner sep=0}, "{\Free^{S}}"', curve={height=18pt}, from=1-3, to=1-1]
	\arrow[""{name=2, anchor=center, inner sep=0}, "{\SubAdj^{S}}", curve={height=-18pt}, from=1-3, to=1-1]
	\arrow["\dashv"{anchor=center, rotate=-99}, draw=none, from=0, to=2]
	\arrow["\dashv"{anchor=center, rotate=-81}, draw=none, from=1, to=0]
\end{tikzcd}\]
    Furthermore, the unit of the top adjunction $\cat{C} \epi \Free^{S}(\cat{C})$ is an epimorphism and the counit of the bottom adjunction $\SubAdj^{S}(\cat{C}) \mono \cat{C}$ is a monomorphism.
\end{proposition}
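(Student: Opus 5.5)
The plan is to exhibit $\Cat{n+1}^{S\textnormal{-adj}}$ as the full subcategory of $\Cat{n+1}$ of objects that are local (orthogonal) with respect to a small set of maps between compact objects. From this, both adjoints and the accessibility statements follow from the adjoint functor theorem. Concretely, let $W$ be the set of maps $L, R \in c_k \to \Adj_k$ for $k \inset S$.

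\textbf{Step 1: the subcategory is the $W$-local objects.} By \autoref{cor:riehl_verity}, $L^*$ and $R^*$ are monomorphisms $[\Adj_k,\A] \mono [c_k,\A]$ onto the components of left, resp. right, adjoint $k$-morphisms. Hence $\A$ has $S$-adjoints if and only if $L^*$ and $R^*$ are surjective on $\pi_0$, i.e. equivalences. So $\Cat{n+1}^{S\textnormal{-adj}}$ is exactly the full subcategory of $W$-local objects.

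\textbf{Step 2: the left adjoint $\Free^S$.} By \autoref{cor:adj_fin_colim}, each $\Adj_k$ is a finite colimit of the $c_i$'s, so the maps in $W$ are maps between compact objects of the locally finitely presentable category $\Cat{n+1}$. Standard localization theory (Lurie, HTT 5.5.4.15) then shows that the $W$-local objects form an accessible localization, and it is moreover $\omega$-accessible since the generators are compact. This produces a left adjoint $\Free^S$, and shows $\Cat{n+1}^{S\textnormal{-adj}}$ is locally finitely presentable.

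For the epimorphism claim, I use the following argument. Because the maps in $W$ are monomorphisms, locality is a property rather than extra structure: a map $c_k \to \A$ extends along $L$ in at most one way. Consequently, for any $S$-adjoint $\cat{D}$, precomposition $[\Free^S\cat{C},\cat{D}] \to [\cat{C},\cat{D}]$ is an equivalence. The real content is the statement that $\cat{C} \to \Free^S\cat{C}$ is an epimorphism, meaning $\Free^S\cat{C} \sqcup_{\cat{C}} \Free^S\cat{C} \to \Free^S\cat{C}$ is an equivalence. I would prove this by noting that $\Free^S\cat{C}$ is computed by a transfinite small-object argument: iteratively push out along $\Adj_k \sqcup_{c_k} \Adj_k \to \Adj_k$ and along $L, R$. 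Each of these cells is an epimorphism by \autoref{cor:riehl_verity}, and epimorphisms are stable under pushout, transfinite composition and composition.

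\textbf{Step 3: the right adjoint $\SubAdj^S$.} This is where \autoref{prop:colim_adj} enters. Since the inclusion is accessible by Step 2, by the adjoint functor theorem it suffices to show that the inclusion preserves small colimits. Colimits in the full subcategory are computed via $\Free^S$ applied to the colimit in $\Cat{n+1}$. By \autoref{prop:colim_adj}, applied once for each $k \inset S$ and for both left and right adjoints, that colimit already has $S$-adjoints. Hence the inclusion preserves colimits, and $\SubAdj^S$ exists.

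For the monomorphism claim on the counit $\SubAdj^S(\cat{C}) \to \cat{C}$, I would use the fact that the inclusion is a colimit-closed coreflective subcategory containing the $c_k$ for $k \notin S$, together with the following test. For any $\cat{D}$ with $S$-adjoints, $\Map(\cat{D},\SubAdj^S\cat{C}) \simeq \Map(\cat{D},\cat{C})$. To show the counit is a monomorphism, it suffices to check that $\Map(T,\SubAdj^S\cat{C}) \to \Map(T,\cat{C})$ is a monomorphism for the generators $T = c_j$. Writing this as $\Map(\Free^S c_j, \cat{C})$, the question reduces to showing that $\Map(\Free^S c_j,\cat{C}) \to \Map(c_j,\cat{C})$ is a monomorphism, and this follows from the epimorphism $c_j \epi \Free^S c_j$ established in Step 2.

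\textbf{Main obstacle.} I expect the main obstacle to be this last reduction. The issue is that $\SubAdj^S\cat{C}$ tested against $c_j$ is not literally $\Map(\Free^S c_j,\cat{C})$, because $c_j$ is not $S$-adjoint. What one actually needs is that every map $c_j \to \SubAdj^S\cat{C}$ factors through $\Free^S c_j$. My first approach is to characterize the image of the counit concretely as the sub-$(\infty,n+1)$-category on those $k$-cells that extend along $\Free^S$, and then verify the universal property directly using the epimorphism property of Step 2.
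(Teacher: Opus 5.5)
Your proposal is correct and follows the paper's proof essentially step by step. You characterize $S$-adjoint categories as (weakly) local objects for $L,R\in c_k\to\Adj_k$; you use the small object argument to exhibit $\cat{C}\to\Free^S(\cat{C})$ as a transfinite composite of pushouts of the epimorphisms $L,R$; and you get $\SubAdj^S$ from \autoref{prop:colim_adj} plus the adjoint functor theorem. (Note that $L,R$ are epimorphisms, not monomorphisms as you say at one point; it is $L^*,R^*$ that are monomorphisms.)

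The ``main obstacle'' you flag is not one. Since $\SubAdj^S(\cat{C})$ itself has $S$-adjoints, the universal property of $\Free^S$ gives
$[\A,\SubAdj^S(\cat{C})]\simeq[\Free^S(\A),\SubAdj^S(\cat{C})]\simeq[\Free^S(\A),\cat{C}]\mono[\A,\cat{C}]$
for every $\A$, and this composite is postcomposition with the counit. That is exactly the paper's one-line argument. So you need neither the reduction to generators nor your claim that $c_k$ has $S$-adjoints for $k\notin S$. That claim is false in general: for instance, $c_2$ does not have $\{1\}$-adjoints. Your argument never actually uses it, though.
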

\begin{proof}
    $\Cat{n+1}$ has small colimits and is locally small.
    Note that $\Cat{n+1}^{S\textnormal{-adj}}$ is the category of $\cat{W}$-weakly local objects \cite[\href{https://kerodon.net/tag/04LH}{Def. 04LH}]{kerodon} for the small collection $\cat{W} = \{c_k \xepi[L] \Adj_k \mid k \inset S\} \cup \{c_k \xepi[R] \Adj_k \mid k \inset S\}$.
    The domains of these maps of $\cat{W}$ are all $c_k$ and hence compact. 
    The codomains of these maps are all $\Adj_k$'s and hence compact by \autoref{cor:adj_fin_colim}.
    By the Small Object Argument \cite[\href{https://kerodon.net/tag/04MD}{Cor. 04MD}]{kerodon}, the inclusion $\Cat{n+1}^{S\textnormal{-adj}} \mono \Cat{n+1}$ has a left adjoint $\Free^{S}$.
    
    Note that this implies $\Cat{n+1}^{S\textnormal{-adj}}$ has all colimits and that they are preserved by $\Free^{S}$.
    Explicitly, these colimits are computed by taking the diagram in $\Cat{n+1}$ via the inclusion, taking its colimit there, and then applying $\Free^{S}$.
    By \autoref{prop:colim_adj}, the last step is not necessary: the colimit taken in $\Cat{n+1}$ already has $S$-adjoints.
    Hence, the inclusion preserves all small colimits and thus is accessible.
    This shows that $\Cat{n+1}^{S\textnormal{-adj}}$ is a locally finitely-presentable category.
    By the Adjoint Functor Theorem \cite[\href{https://kerodon.net/tag/06Q5}{Thm. 06Q5}]{kerodon}, the inclusion $\Cat{n+1}^{S\textnormal{-adj}} \mono \Cat{n+1}$ has a right adjoint $\SubAdj^{S}$.

    Because epimorphisms are stable under pushout and transfinite composition, the unit $\cat{C} \epi \Free^{S}(\cat{C})$ is an epimorphism by construction.
    For $\A, \B \inset \Cat{n+1}$, we then have $[\A, \SubAdj^{S}(\B)] \simeq [\Free^{S}(\A),\SubAdj^{S}(\B)] \simeq [\Free^{S}(\A), \B] \mono [\A, \B]$, showing that the counit $\SubAdj^{S}(\B) \mono \B$ is a monomorphism.
\end{proof}

As a corollary to \autoref{prop:Free_Sub_exist}, we can now define the \textit{property} for morphisms of being fully-adjoint.

\begin{definition}[Fully-Adjoint Morphisms]
    Let $n \geq k \geq 0$, $S \subset \{k+1,\ldots, n\}$, $\cat{C} \inset \Cat{n+1}$ and $f \in c_k \to \cat{C}$ a $k$-morphism.
    We say that $f$ is fully-$S$-adjoint if $f$ factors as $c_k \to \SubAdj^{S}(\cat{C}) \mono \cat{C}$.
    If $S = \{k+1,\ldots, n\}$, we simply say that $f$ is fully-adjoint or fully-dualizable.
\end{definition}

One can easily build upon this and find a \textit{relative} version of the result above.

\begin{corollary}\label{cor:partial_Free_Sub_exists}
    Let $S \subset K \subset \{1,\ldots , n\}$.
    The inclusion $\Cat{n+1}^{K\textnormal{-adj}} \mono \Cat{n+1}^{S\textnormal{-adj}}$ fits into a bireflection between locally finitely-presentable categories:
\[\begin{tikzcd}
	{\Cat{n+1}^{K\textnormal{-adj}}} && {\Cat{n+1}^{S\textnormal{-adj}}}
	\arrow[""{name=0, anchor=center, inner sep=0}, hook, from=1-1, to=1-3]
	\arrow[""{name=1, anchor=center, inner sep=0}, "{\Free^{S \to K}}"', curve={height=18pt}, from=1-3, to=1-1]
	\arrow[""{name=2, anchor=center, inner sep=0}, "{\SubAdj^{S\to K}}", curve={height=-18pt}, from=1-3, to=1-1]
	\arrow["\dashv"{anchor=center, rotate=-91}, draw=none, from=0, to=2]
	\arrow["\dashv"{anchor=center, rotate=-89}, draw=none, from=1, to=0]
\end{tikzcd}\]
    Furthermore, the unit of the top adjunction $\cat{C} \epi \Free^{S\to K}(\cat{C})$ is an epimorphism and the counit of the bottom adjunction $\SubAdj^{S \to K}(\cat{C}) \mono \cat{C}$ is a monomorphism.
\end{corollary}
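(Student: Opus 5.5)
The plan is to repeat the proof of \autoref{prop:Free_Sub_exist} with $\Cat{n+1}$ replaced by $\Cat{n+1}^{S\textnormal{-adj}}$. There are two equally good routes, and I would give the first as the main argument and keep the second as a cross-check.

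\textbf{Route via composition.} Take $\Free^{S\to K}$ to be the restriction of $\Free^{K}$ to $\Cat{n+1}^{S\textnormal{-adj}}$. Take $\SubAdj^{S\to K}$ to be the restriction of $\SubAdj^{K}$. Since $S \subset K$, any category with $K$-adjoints has $S$-adjoints, so both restrictions land in $\Cat{n+1}^{K\textnormal{-adj}}$. For $\cat{C} \inset \Cat{n+1}^{S\textnormal{-adj}}$ and $\cat{D} \inset \Cat{n+1}^{K\textnormal{-adj}}$, all three categories involved are full subcategories of $\Cat{n+1}$. Hence the mapping spaces agree, and the adjunction equivalences $[\Free^{K}\cat{C},\cat{D}] \simeq [\cat{C},\cat{D}]$ and $[\cat{D},\SubAdj^{K}\cat{C}] \simeq [\cat{D},\cat{C}]$ from \autoref{prop:Free_Sub_exist} restrict directly.

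For the same reason, the unit $\cat{C} \epi \Free^{S\to K}(\cat{C})$ and the counit $\SubAdj^{S\to K}(\cat{C}) \mono \cat{C}$ are the same maps as in \autoref{prop:Free_Sub_exist}. They are therefore epimorphisms and monomorphisms in $\Cat{n+1}$. The one remaining check is that they are still epi/mono in the full subcategory $\Cat{n+1}^{S\textnormal{-adj}}$. Epi/mono is a condition on mapping spaces $[\cat{C},-]$ and $[-,\cat{C}]$ being monomorphisms of spaces. Restricting the test objects to a full subcategory preserves this condition.

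\textbf{Local finite presentability.} I need $\Cat{n+1}^{S\textnormal{-adj}}$ to be locally finitely presentable. This is already in \autoref{prop:Free_Sub_exist}, and likewise for $K$. The only point to watch is that the inclusion $\Cat{n+1}^{K\textnormal{-adj}} \mono \Cat{n+1}^{S\textnormal{-adj}}$ preserves colimits. By \autoref{prop:colim_adj}, both subcategories are closed under colimits in $\Cat{n+1}$, so colimits in each are computed in $\Cat{n+1}$ and the inclusion preserves them.

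\textbf{Route via weakly local objects.} Alternatively, one can run the small object argument inside $\Cat{n+1}^{S\textnormal{-adj}}$ with $\cat{W} = \{L,R\colon c_k \to \Adj_k \mid k \inset K\setminus S\}$, after applying $\Free^{S}$. This uses that $\Free^{S}$ preserves compact objects, since its right adjoint preserves filtered colimits. The compactness of $\Free^{S}(c_k)$ and $\Free^{S}(\Adj_k)$ then follows from \autoref{cor:adj_fin_colim}.

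\textbf{Main obstacle.} In the second route, the small object argument produces a categorical epimorphism only after checking that pushouts and transfinite composites computed in $\Cat{n+1}^{S\textnormal{-adj}}$ agree with those in $\Cat{n+1}$; this again reduces to \autoref{prop:colim_adj}. The first route avoids this issue entirely, which is why I prefer it.
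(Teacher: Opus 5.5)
Your proposal is correct, and your preferred first route differs from the paper's. The paper's proof is essentially your second route. It identifies $\Cat{n+1}^{K\textnormal{-adj}}$ with the weakly local objects in $\Cat{n+1}^{S\textnormal{-adj}}$ for the images under $\Free^{S}$ of $L,R\colon c_k\to\Adj_k$ ($k\in K$). It then uses that $\Free^{S}$ preserves compact objects and epimorphisms, and repeats the small object argument and adjoint functor theorem from \autoref{prop:Free_Sub_exist}.

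Your first route instead applies \autoref{prop:Free_Sub_exist} to $K$ itself. It then uses the formal fact that a reflection or coreflection onto a full subcategory restricts, with the same unit and counit, to any intermediate full subcategory. This works because $\Cat{n+1}^{K\textnormal{-adj}}\subset\Cat{n+1}^{S\textnormal{-adj}}\subset\Cat{n+1}$ are all full. That inclusion is where $S\subset K$ enters; the restrictions land in $\Cat{n+1}^{K\textnormal{-adj}}$ simply by construction of $\Free^{K}$ and $\SubAdj^{K}$. The epi/mono properties and local finite presentability are then inherited verbatim.

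Your route needs no new compactness or cell-attachment input, and it makes $\Free^{K}\simeq\Free^{S\to K}\circ\Free^{S}$ immediate. The paper's route, in exchange, exhibits $\cat{C}\to\Free^{S\to K}(\cat{C})$ explicitly as a transfinite composite of pushouts of $\Free^{S}(c_k)\to\Free^{S}(\Adj_k)$. That is the kind of cell description \autoref{lem:Free_pres_adj} exploits.

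Two small points. First, the colimit-preservation check in your local presentability paragraph is unnecessary, since the inclusion has the right adjoint $\SubAdj^{S\to K}$. Second, the obstacle you flag for the second route is already resolved by the coreflection. For $Y$ with $S$-adjoints one has $[Y,Z]\simeq[Y,\SubAdj^{S}(Z)]$ naturally, so an epimorphism in $\Cat{n+1}^{S\textnormal{-adj}}$ is automatically one in $\Cat{n+1}$.
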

\begin{proof}
    The argument is entirely similar to that of \autoref{prop:Free_Sub_exist}.
    We can identify $\Cat{n+1}^{K\textnormal{-adj}} \mono \Cat{n+1}^{S\textnormal{-adj}}$ with the full subcategory of $\cat{W}$-weakly local objects for the small collection $\cat{W} = \{\Free^{S}(c_k \xepi[L] \Adj_k) \mid k \inset K\} \cup \{\Free^{S}(c_k \xepi[R] \Adj_k) \mid k \inset K\}$.
    The functor $\Free^{S}$ preserves compact objects (its right adjoint preserves filtered colimit) and epimorphisms (as it preserves pushouts), so we can apply the same argument as above.
\end{proof}

We now focus on a couple of results which are crucial to the inductive formulation of the main theorems of this paper. 

\begin{lemma}[$\Free$ preserves (some) Ajoints]\label{lem:Free_pres_adj}
    Let $S \subset \{2,\ldots , n\}$ and $m < \min(S)$.
    Then $\Free^{S}$ sends $\Cat{n+1}^{\{m\}\textnormal{-adj}}$ to $\Cat{n+1}^{S\cup\{m\}\textnormal{-adj}}$.
\end{lemma}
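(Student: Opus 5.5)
We build $\Free^{S}(\cat{C})$, for $\cat{C} \inset \Cat{n+1}^{\{m\}\textnormal{-adj}}$, via the small object argument used in the proof of \autoref{prop:Free_Sub_exist}. Concretely, $\Free^{S}(\cat{C})$ is a transfinite composite of pushouts: at each stage we take a coproduct of maps $c_k \xepi[L] \Adj_k$ and $c_k \xepi[R] \Adj_k$ with $k \inset S$, and glue it along $k$-morphisms of the previous stage. So $\Free^{S}(\cat{C})$ is a colimit in $\Cat{n+1}$ of a diagram whose stages are built from $\cat{C}$ and from copies of $\Adj_k$ and $c_k$. The plan is to show that every $(\infty,n+1)$-category in this diagram has $\{m\}$-adjoints, and then to apply \autoref{prop:colim_adj}. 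The $S$-adjoints hold by definition, since $\Free^{S}(\cat{C})$ lies in $\Cat{n+1}^{S\textnormal{-adj}}$.

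\textbf{Step 1: the generating pieces have $m$-adjoints.} The relevant objects are $c_k$ and $\Adj_k = \sigma^{k-1}\Adj$ with $k \inset S$, so $k \geq 2$ and $k > m$. Their $m$-morphisms are trivial in the suspended directions. By \autoref{const:suspension}, a $j$-cell of $\sigma^{k-1}Y$ with $j < k-1$ comes from $\partial c_{k}$ or the $(k-1)$-fold product part, and since $m < k$ every $m$-morphism of $c_k$ or of $\Adj_k$ is an identity. Identities have adjoints (both left and right, namely themselves). Thus $c_k$ and $\Adj_k$ lie in $\Cat{n+1}^{\{m\}\textnormal{-adj}}$, and so does $\cat{C}$ by hypothesis. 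The one subtle case is $m = k-1$: there $\Adj_k$ has two distinct $(k-1)$-cells $+$ and $-$, but these are objects of the hom-category, not morphisms. In fact the $m$-morphisms of $\sigma^{k-1}\Adj$ are only the identities on $\partial c_{k-1}$-type cells together with identities at $\pm$, so the same argument applies.

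\textbf{Step 2: closure under the construction.} The class $\Cat{n+1}^{\{m\}\textnormal{-adj}}$ is closed under all colimits in $\Cat{n+1}$ by \autoref{prop:colim_adj}. Each pushout $\cat{C}_\alpha \sqcup_{\coprod c_k} \coprod \Adj_k$ is therefore again in the class, and transfinite composites, which are filtered colimits, are also in the class. By induction on the ordinal stage, $\Free^{S}(\cat{C})$ has $\{m\}$-adjoints. Combined with its $S$-adjoints, it lies in $\Cat{n+1}^{S\cup\{m\}\textnormal{-adj}}$.

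\textbf{Main obstacle.} The delicate point is justifying that $\Free^{S}$ really is computed by this explicit cell-attachment colimit in $\Cat{n+1}$, rather than only abstractly. For weakly local objects, the kerodon small object argument \cite[\href{https://kerodon.net/tag/04MD}{Cor. 04MD}]{kerodon} produces the reflection as a transfinite composite of pushouts of coproducts of maps in $\cat{W}$, together with "codiagonal" maps that enforce uniqueness. The codiagonals have the form $\Adj_k \sqcup_{c_k} \Adj_k \to \Adj_k$, which are equivalences by \autoref{cor:riehl_verity}, so they contribute nothing new. I would verify this form of the construction first. If it fails, the fallback is to argue via the universal property instead: for $\cat{D} \inset \Cat{n+1}^{S\textnormal{-adj}}$, compare maps out of $\Free^{S}(\cat{C})$ with maps out of $\Free^{S\cup\{m\}}(\cat{C})$, using that the unit $\cat{C} \epi \Free^{S}(\cat{C})$ is an epimorphism to see that the $m$-adjoint lifts from $\cat{C}$ extend.
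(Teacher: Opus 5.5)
\textbf{Your Step 1 is false, and the successor step of your induction fails with it.} For $m<k$, the categories $c_k$ and $\Adj_k$ do \emph{not} have $\{m\}$-adjoints. Their boundary cells (from $\partial c_k$, resp.\ from $\partial c_{k-1}$ together with the objects $\pm$ of $\Adj$) are non-identity $m$-morphisms. Since every cell of a globe points in one direction, none of them has an adjoint.

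Concretely, take $k=2$, $m=1$. Then $\Adj_2=\sigma\Adj$ has two objects $\bullet_-,\bullet_+$, and its $1$-morphisms $\bullet_-\to\bullet_+$ are exactly the objects $+,-$ of $\Adj$. These are honest non-identity $1$-morphisms: objects of a hom-category \emph{are} $1$-morphisms, contrary to what you wrote. There is no $1$-morphism $\bullet_+\to\bullet_-$ at all, so neither has an adjoint. The same holds for $a_1,b_1\colon a_0\to b_0$ in $c_2$.

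Consequently \autoref{prop:colim_adj} cannot be applied to the span $\cat{C}_\alpha\leftarrow c_k\to\Adj_k$, since its hypothesis is that every category in the diagram has the relevant adjoints. Your limit step is fine, because there the diagram is only the chain of earlier stages, which have $m$-adjoints by induction.

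\textbf{What is missing is that attaching $\Adj_k$ along $c_k$ creates no new $m$-morphisms.} Since $m<k$, the map $c_k\to\Adj_k$ is an equivalence on all levels $(a_1,\ldots,a_m,0,\ldots,0)$. So the pushout computed in $n$-fold simplicial spaces agrees with $\cat{C}_\alpha$ on those levels, and Segalification does not alter them. By \autoref{lem:completion_lifts}, every $m$-morphism of $\cat{C}_{\alpha+1}$ then lifts to one of $\cat{C}_\alpha$, whose adjunction $\Adj_m\to\cat{C}_\alpha$ pushes forward. This is where the hypothesis on $\cat{C}$ is genuinely used, and it is the paper's argument.

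Your fallback through the universal property does not supply this either. An epimorphism need not be surjective on $m$-morphisms: $L\colon c_1\to\Adj$ is an epimorphism by \autoref{thm:riehl_verity}, yet $R$ is not in its image. So knowing that $\cat{C}\to\Free^{S}(\cat{C})$ is epic says nothing about whether newly created $m$-morphisms have adjoints.
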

\begin{proof}
    Let $\cat{C} \inset \Cat{n+1}^{\{m\}\textnormal{-adj}}$.
    Recall from \autoref{prop:Free_Sub_exist} that the reflection $\cat{C} \epi \Free^{S}(\cat{C})$ is a transfinite composition of $\kappa$-many pushouts of maps in $\{c_k \xto[L] \Adj_k \mid k \inset S\} \cup \{c_k \xto[R] \Adj_k \mid k \inset S\}$, for some ordinal $\kappa$.
    We show by transfinite induction that the codomain of any such composition lands in $\Cat{n+1}^{\{m\}\textnormal{-adj}}$.
    \begin{enumerate}
        \item \textbf{Initialization:} By definition, $\cat{C}_0 = \cat{C} \inset \Cat{n+1}^{\{m\}\textnormal{-adj}}$.
        
        \item \textbf{Successor step:} Let $\alpha < \kappa$.
        There is a $k \inset S$ and a pushout diagram
        \[\begin{tikzcd}
            {c_k} & {\Adj_k} \\
            {\cat{C}_\alpha} & {\cat{C}_{\alpha+1}}
            \arrow[from=1-1, to=1-2]
            \arrow[from=1-1, to=2-1]
            \arrow[dashed, from=1-2, to=2-2]
            \arrow[dashed, from=2-1, to=2-2]
            \arrow["\lrcorner"{anchor=center, pos=0.125, rotate=180}, draw=none, from=2-2, to=1-1]
        \end{tikzcd}\]
        By \autoref{lem:completion_lifts}, it suffices to show the $m$-morphisms $c_m \to \mathbb{L}_{S}^\Delta(\cat{C}_{\alpha} \sqcup_{c_k} \Adj_k)$ have adjoints.
        It is immediate that for $a_1, \ldots, a_m \geq 0$, we have $(c_k)_{a_1, \ldots, a_m, 0, \ldots, 0} \simeq (\Adj_k)_{a_1, \ldots, a_m, 0, \ldots, 0}$.
        Hence, we have an equivalence $(\cat{C}_{\alpha})_{a_1, \ldots, a_m, 0, \ldots, 0} \simeq (\cat{C}_{\alpha} \sqcup_{c_k} \Adj_k)_{a_1, \ldots, a_m, 0, \ldots, 0}$.
        So the pushout is already Segal in the first $m$ directions and in the iterative Necklace formula \cite[Thm. D]{BS24}, the first $m$ steps do nothing.
        The last steps do not affect the space of $m$-morphisms either, so we have $(\cat{C}_{\alpha})_{a_1, \ldots, a_m, 0, \ldots, 0} \simeq (\mathbb{L}_{S}^\Delta(\cat{C}_{\alpha} \sqcup_{c_k} \Adj_k))_{a_1, \ldots, a_m, 0, \ldots, 0}$.
        By assumption, every $m$-morphism in $\cat{C}_{\alpha}$ has an adjoint, which concludes the successor step.

        \item \textbf{Limit step:} Let $\lambda \leq \kappa$ be a limit ordinal.
        We can write $\cat{C}_\lambda = \Colim_{\alpha < \lambda} \cat{C}_\alpha$, a colimit of categories with $m$-adjoints.
        By \autoref{prop:colim_adj}, we have $\cat{C}_\lambda \inset \Cat{n+1}^{\{m\}\textnormal{-adj}}$.
    \end{enumerate}
    By the transfinite induction principle, we get $\Free^{S}(\cat{C}) \inset \Cat{n+1}^{\{m\}\textnormal{-adj}} \cap \Cat{n+1}^{S\textnormal{-adj}} = \Cat{n+1}^{S\cup\{m\}\textnormal{-adj}}$.
\end{proof}

\begin{corollary}[Composing Free/Sub Constructions]\label{prop:free_sub_compose}
    Let $k < n$ and $S \subset \{k+1,\ldots , n\}$.
    Then $\Free^{\{k\} \cup S} = \Free^{S} \circ \Free^{\{k\}}$.
\end{corollary}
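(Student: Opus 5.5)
The identity $\Free^{\{k\} \cup S} = \Free^{S} \circ \Free^{\{k\}}$ will follow from a comparison of universal properties. Both sides are functors $\Cat{n+1} \to \Cat{n+1}^{\{k\}\cup S\textnormal{-adj}}$, so the plan is to show that $\Free^{S} \circ \Free^{\{k\}}$ is left adjoint to the inclusion $\Cat{n+1}^{\{k\}\cup S\textnormal{-adj}} \mono \Cat{n+1}$. Uniqueness of adjoints then gives the equivalence.

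\textbf{Step 1: the composite lands in the right subcategory.} For $\cat{C} \inset \Cat{n+1}$, the object $\Free^{\{k\}}(\cat{C})$ lies in $\Cat{n+1}^{\{k\}\textnormal{-adj}}$ by \autoref{prop:Free_Sub_exist}. Since $S \subset \{k+1,\ldots,n\}$, we have $k < \min(S)$, and (when $S$ is nonempty) $S \subset \{2,\ldots,n\}$, because $k \geq 1$ whenever $\{k\}$-adjoints make sense. We can therefore apply \autoref{lem:Free_pres_adj} with $m = k$. It gives
\[
\Free^{S}(\Free^{\{k\}}(\cat{C})) \inset \Cat{n+1}^{S\cup\{k\}\textnormal{-adj}}.
\]
If $S = \emptyset$, the statement is trivial since $\Free^{\emptyset} = \mathrm{id}$.

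\textbf{Step 2: the universal property.} Let $\cat{D} \inset \Cat{n+1}^{\{k\}\cup S\textnormal{-adj}}$. In particular $\cat{D}$ lies both in $\Cat{n+1}^{S\textnormal{-adj}}$ and in $\Cat{n+1}^{\{k\}\textnormal{-adj}}$. Using the two adjunctions of \autoref{prop:Free_Sub_exist} in turn, we get natural equivalences
\[
[\Free^{S}\Free^{\{k\}}(\cat{C}), \cat{D}] \simeq [\Free^{\{k\}}(\cat{C}), \cat{D}] \simeq [\cat{C}, \cat{D}].
\]
The composite equivalence is induced by precomposition with the unit $\cat{C} \epi \Free^{\{k\}}(\cat{C}) \epi \Free^{S}\Free^{\{k\}}(\cat{C})$. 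This exhibits $\Free^{S}\circ\Free^{\{k\}}$, corestricted to $\Cat{n+1}^{\{k\}\cup S\textnormal{-adj}}$, as a left adjoint to the inclusion. The left adjoint $\Free^{\{k\}\cup S}$ also exists by \autoref{prop:Free_Sub_exist}, so the two functors agree, compatibly with their units.

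\textbf{Main obstacle.} The only nontrivial input is Step 1: one has to know that freely adding $S$-adjoints does not destroy the $k$-adjoints already present. This is exactly \autoref{lem:Free_pres_adj}, and the only care needed is checking its hypotheses, namely $m = k < \min(S)$ and $S \subset \{2,\ldots,n\}$. Everything else is formal manipulation of reflective subcategories.
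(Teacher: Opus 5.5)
Your proposal is correct and follows the paper's proof. \autoref{lem:Free_pres_adj} with $m = k$ shows that $\Free^{S}\circ\Free^{\{k\}}$ lands in $\Cat{n+1}^{\{k\}\cup S\textnormal{-adj}}$, and the chain $[\Free^{S}\Free^{\{k\}}(\cat{C}), \cat{D}] \simeq [\Free^{\{k\}}(\cat{C}), \cat{D}] \simeq [\cat{C}, \cat{D}]$ gives the universal property; your check that $k \geq 1$ forces $S \subset \{2,\ldots,n\}$, as the lemma requires, is a detail the paper leaves implicit.
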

\begin{proof}
    Let $\A \inset \Cat{n+1}$.
    By \autoref{lem:Free_pres_adj}, we know that $\Free^{S} \circ \Free^{\{k\}}(\A)$ has $\{k\} \cup S$-adjoints.
    It then suffices to see that it satisfies the correct universal property.
    For $\B \inset \Cat{n+1}^{\{k\}\cup S\textnormal{-adj}}$, we have
    \[
        {[\Free^{S} \circ \Free^{\{k\}}(\A), \B]} \simeq [\Free^{\{k\}}(\A), \B] \simeq [\A, \B]
    \]
    as desired.
\end{proof}

\subsection{Lifting Once-Fully-Adjoint Morphisms}\label{subsec:lifting_adj}

In this subsection, we finalize the categorical prerequisites for the inductive argument at the core of the proof of the main theorem of this paper (\autoref{thm:full_dualizability_criterion}).
The result below formalizes the idea that ``constructing a full-$\{1\}$-adjoint is equivalent to finding a sequence of adjunctions''

\begin{proposition}[Free Full Adjunction]\label{prop:free_full_adj}
    Denote $\mathbb{Z}^{\to\leftarrow}$ the poset whose objects are the integers and which has morphisms $2n \to 2n\pm 1$ for all $n \in \mathbb{Z}$.
    We have a colimit diagram
\[\begin{tikzcd}
	&&&& {\Free^{\{k\}}(c_k)} \\
	\\
	\cdots && {\Adj_k} && {\Adj_k} && {\Adj_k} && \cdots \\
	& {c_k} && {c_k} && {c_k} && {c_k}
	\arrow[curve={height=-12pt}, dashed, two heads, from=3-1, to=1-5]
	\arrow[curve={height=-6pt}, dashed, two heads, from=3-3, to=1-5]
	\arrow[dashed, two heads, from=3-5, to=1-5]
	\arrow[curve={height=6pt}, dashed, two heads, from=3-7, to=1-5]
	\arrow[curve={height=12pt}, dashed, two heads, from=3-9, to=1-5]
	\arrow["R"', two heads, from=4-2, to=3-1]
	\arrow["L", two heads, from=4-2, to=3-3]
	\arrow["R"', two heads, from=4-4, to=3-3]
	\arrow["L", two heads, from=4-4, to=3-5]
	\arrow["R"', two heads, from=4-6, to=3-5]
	\arrow["L", two heads, from=4-6, to=3-7]
	\arrow["R"', two heads, from=4-8, to=3-7]
	\arrow["L", two heads, from=4-8, to=3-9]
\end{tikzcd}\]
We denote by $f^{\vee n}\in c_k \epi \Free^{\{k\}}(c_k)$ the $n$-th cocone projection (from left to right, meaning that $\vee$ corresponds to taking right adjoints), which are all epimorphisms.
\end{proposition}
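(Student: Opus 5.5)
Let $\cat{Z}$ denote the colimit in $\Cat{k+1}$ of the zigzag diagram. The plan is to show that $\cat{Z}$ has $\{k\}$-adjoints and satisfies the universal property of $\Free^{\{k\}}(c_k)$; these two facts together identify it with $\Free^{\{k\}}(c_k)$. The epimorphism claim is handled at the end.

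\textbf{Step 1: $\cat{Z}$ has adjoints for all $k$-morphisms.} I would not apply \autoref{prop:colim_adj} directly, since $\Adj_k$ itself does not have adjoints for all $k$-morphisms: its unit and counit are $(k+1)$-morphisms, but the new $k$-morphisms (composites such as $RL$) are what need attention. Instead, I would use the reduction from the proof of \autoref{prop:colim_adj}. First compute the colimit in $\Psh(\Delta^{k+1})$, then Segalify, then complete. By \autoref{lem:completion_lifts}, it suffices to treat $k$-cells of the Segalification.

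For $k=1$, the $1$-cells of the presheaf colimit are exactly the images of $L$ and $R$ from the various copies of $\Adj_k$. Each of these has both adjoints inside the colimit: the $n$-th generator $f^{\vee n}$ is the right adjoint of $f^{\vee n-1}$ in the $n$-th copy and the left adjoint of $f^{\vee n+1}$ in the next copy. So \autoref{lem:partial_adj_segal} applies: closure of $\cat{I}$ under composition gives all $1$-cells after Segalification. For general $k$, I would reduce to $k=1$ by noting that the whole diagram is $\sigma^{k-1}$ of the $k=1$ diagram. Since $\sigma^{k-1}$ preserves contractible colimits (as in the proof of \autoref{cor:riehl_verity}) and the indexing poset $\mathbb{Z}^{\to\leftarrow}$ is contractible, $\cat{Z}=\sigma^{k-1}(\cat{Z}_1)$, and adjoints of $k$-morphisms in a suspension are suspended adjoints.

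\textbf{Step 2: universal property.} Let $\B$ have $\{k\}$-adjoints. A functor $\cat{Z}\to\B$ is a compatible family of functors $\Adj_k\to\B$ agreeing on the shared $c_k$'s. By \autoref{cor:riehl_verity}, $L^*$ and $R^*$ are monomorphisms onto the components of left and right adjoints, so $[\Adj_k,\B]\simeq[c_k,\B]$ via either projection, because in $\B$ every $k$-morphism is both a left and a right adjoint. Every transition map in the limit diagram computing $[\cat{Z},\B]$ is therefore an equivalence. The limit over the contractible poset $\mathbb{Z}^{\to\leftarrow}$ of a diagram of equivalences is equivalent to any single term, so evaluating at $f^{\vee 0}$ gives $[\cat{Z},\B]\simeq[c_k,\B]$. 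Combined with Step 1, this identifies $\cat{Z}\simeq\Free^{\{k\}}(c_k)$, and the unit $c_k\to\Free^{\{k\}}(c_k)$ is the projection $f^{\vee 0}$.

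\textbf{Step 3: epimorphisms.} The projection $f^{\vee 0}$ is the unit, hence an epimorphism by \autoref{prop:Free_Sub_exist}. Each $f^{\vee n}$ differs from $f^{\vee 0}$ by an automorphism of $\cat{Z}$ that shifts the diagram by an even amount, or composes with the involution $L\leftrightarrow R$ (reflection) for odd amounts, so each $f^{\vee n}$ is also an epimorphism.

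The main obstacle I expect is Step 1 for general $k$. The suspension argument has to be checked carefully, because "adjoints for all $k$-morphisms" in $\sigma^{k-1}\cat{Z}_1$ also involves $k$-morphisms between different objects of $\partial c_k$. These are only identities or composites handled by the suspended $1$-case, but the bookkeeping through Segalification needs care.
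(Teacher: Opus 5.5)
Your proposal is correct and is essentially the paper's argument: the colimit acquires $\{k\}$-adjoints through \autoref{lem:partial_adj_segal} and \autoref{lem:completion_lifts} (after suspending from $k=1$), and \autoref{cor:riehl_verity} turns the zigzag of mapping spaces into a zigzag of equivalences, with your check of the universal property against every $\mathcal{B}$ with $\{k\}$-adjoints simply making explicit the paper's ``mutually inverse by universal properties'' step. Two small corrections: the $1$-cells of the presheaf colimit are all words in the $L$'s and $R$'s of each copy of $\Adj_k$, not only the generators, which your closure-under-composition step already handles; and in Step 3 the even shifts $m \mapsto m+2j$ of $\mathbb{Z}^{\to\leftarrow}$ already carry $f^{\vee 0}$ to every $f^{\vee j}$, so drop the reflection clause, which is not an automorphism of the diagram because it would require $R \dashv L$ in $\Adj_k$.
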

\begin{proof}
   It suffices to show the result for $k=1$, as by \autoref{const:suspension}, suspension is the composition of a left adjoint and the forgetful from the slice, and hence commutes with contractible colimits.
   Firstly, we build the cocone diagram.
   By definition, we know that $\Free^{\{1\}}(c_1)$ is fully-$\{1\}$-adjoint and receives a universal functor $c_1 \to \Free^{\{1\}}(c_1)$.
   The space of cocones is the limit of the diagram $\mathbb{Z}^{\to\leftarrow} \to \Spa$ below:
\[\begin{tikzcd}
	\ldots && {[\Adj, \Free^{\{1\}}(c_1)]} && \cdots \\
	& {[c_1,\Free^{\{1\}}(c_1)]} && {[c_1, \Free^{\{1\}}(c_1)]}
	\arrow["{R^*}", from=1-1, to=2-2]
	\arrow["{L^*}"', from=1-3, to=2-2]
	\arrow["{R^*}", from=1-3, to=2-4]
	\arrow["{L^*}"', from=1-5, to=2-4]
\end{tikzcd}\]
    By \autoref{thm:riehl_verity}, this is an infinite zigzag of weak homotopy equivalences.
    By picking the point in the zero-th space corresponding to the universal functor $c_ 1 \to \Free^{\{1\}}(c_1)$, we get a canonical cocone diagram.

    Secondly, we build a functor $\Free^{\{1\}}(c_1) \to \Colim (\mathbb{Z}^{\to\leftarrow} \to \Cat{2})$.
    By universal property of $\Free^{\{1\}}(c_1)$, it is sufficient to show that $\Colim (\mathbb{Z}^{\to\leftarrow} \to \Cat{2})$ has $\{1\}$-adjoints and to pick a $1$-morphism $c_1 \to \Colim (\mathbb{Z}^{\to\leftarrow} \to \Cat{2})$.
    The first part is a consequence of \autoref{lem:partial_adj_segal} and \autoref{lem:completion_lifts}, as any morphism coming from a copy of $\Adj$ is given by a composition of $L$'s and $R$'s, which have adjoints.
    The second part follows by choosing the $1$-cell $c_1 \to \Colim (\mathbb{Z}^{\to\leftarrow} \to \Cat{2})$ given by the universal cocone projection on the zero-th copy of $c_1$.
    
    It is now easy to see that the two functors $\Free^{\{1\}}(c_1) \to \Colim (\mathbb{Z}^{\to\leftarrow} \to \Cat{2})$ and $\Colim (\mathbb{Z}^{\to\leftarrow} \to \Cat{2}) \to \Free^{\{1\}}(c_1)$ are inverses to each other by examining the universal properties.
\end{proof}

\begin{remark}
    This result can be generalized to construct any $\Free^{\{k, \ldots, n\}}(c_k)$ as a colimit.
    It requires, however, a more careful formulation of \autoref{prop:colim_adj}.
    One can even prove that for $k < n$, the colimit diagrams can be chosen finite, by carefully looking at the notion of redundancy of adjunction data (see \autoref{lem:redundancy} for an example of such a result).
\end{remark}

We get the following immediate corollary, which will be used in the proof of the main theorem of this paper (\autoref{thm:full_dualizability_criterion}).

\begin{corollary}[Lifting Once-Fully-Dualizable Morphisms]\label{cor:lifting_once_fully_dualizable}
    Let $1 \leq k \leq n$.
    Suppose we have a monomorphism $\cat{A} \mono \cat{B} \in \Cat{n+1}$, a map $\Free^{\{k\}}(c_k) \to \cat{B}$ such that all precomposition with $f^{\vee m} \in c_k \to \Free^{\{k\}}(c_k)$ lifts to $\cat{A}$ as in the diagram below.
    Then, there exists a dashed lift if and only if, for all $m$, there exist the dotted lifts:
\[\begin{tikzcd}
	& {\cat{A}} && {\cat{B}} \\
	{c_{k+1}} \\
	& {c_k} && {\Free^{\{k\}}(c_k)} \\
	{c_{k+1}}
	\arrow[hook, from=1-2, to=1-4]
	\arrow[dotted, from=2-1, to=1-2]
	\arrow["{\epsilon(f^{\vee m})}"{description}, from=2-1, to=3-4]
	\arrow[from=3-2, to=1-2]
	\arrow["{f^{\vee m}}"', two heads, from=3-2, to=3-4]
	\arrow[dashed, from=3-4, to=1-2]
	\arrow[from=3-4, to=1-4]
	\arrow[dotted, from=4-1, to=1-2]
	\arrow["{\eta(f^{\vee m})}"{description}, from=4-1, to=3-4]
\end{tikzcd}\]
    In other words: a fully-$\{k\}$-dualizable morphism lifts if all of its units and counits lift.  
\end{corollary}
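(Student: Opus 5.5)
We argue directly from the colimit presentation of $\Free^{\{k\}}(c_k)$ in \autoref{prop:free_full_adj}, together with the fact that $\cat{A} \mono \cat{B}$ is a monomorphism. Being a monomorphism means that for every $\cat{D}$ the map $[\cat{D},\cat{A}] \to [\cat{D},\cat{B}]$ is an inclusion of connected components. Lifting a map $\cat{D} \to \cat{B}$ to $\cat{A}$ is therefore a property rather than extra structure, and any lift is unique if it exists.

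The ``only if'' direction is immediate. Given a dashed lift $\Free^{\{k\}}(c_k) \to \cat{A}$, we compose it with $\epsilon(f^{\vee m})$ and $\eta(f^{\vee m})$, viewed as $(k+1)$-cells $c_{k+1} \to \Free^{\{k\}}(c_k)$. This produces the dotted lifts.

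For the ``if'' direction, \autoref{prop:free_full_adj} expresses $\Free^{\{k\}}(c_k)$ as the colimit of the zigzag of copies of $\Adj_k$ glued along the copies of $c_k$ via $L$ and $R$. Since $[-,\cat{A}] \to [-,\cat{B}]$ is a monomorphism of limit diagrams, it suffices to lift each restriction $\Adj_k \to \Free^{\{k\}}(c_k) \to \cat{B}$ to $\cat{A}$. The compatibilities on the copies of $c_k$ then hold automatically, because lifts through a monomorphism are unique: a compatible family in $\cat{B}$ whose components lie in the sub-components of $\cat{A}$ gives a point of $\lim [-,\cat{A}]$.

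We therefore reduce to the following claim. Let $g\colon \Adj_k \to \cat{B}$ be a map whose restrictions along $L$ and $R$, namely $f^{\vee m}$ and $f^{\vee (m+1)}$, lift to $\cat{A}$, and whose unit and counit $(k+1)$-cells also lift to $\cat{A}$. Then $g$ lifts to $\cat{A}$. To prove this, use \autoref{cor:adj_fin_colim} and the construction in its proof: $\Adj_k$ is a finite colimit of cells $c_i$ that encode $L$, $R$, $\eta$, $\epsilon$ and the $(k+2)$-cells witnessing the triangle identities. Since the map is a monomorphism, it suffices to lift each generating cell and check that the gluing is compatible. The cells $L$ and $R$ lift by hypothesis, and $\eta$ and $\epsilon$ lift by the dotted hypotheses. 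The remaining generators are the triangle-identity $(k+2)$-cells. Their boundaries are composites of cells already lifted, and $\cat{A}$ is closed under composition. We then need to know that a $(k+2)$-cell of $\cat{B}$ whose boundary lies in $\cat{A}$ lies in $\cat{A}$, that is, that the monomorphism is full on top-dimensional data.

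This is the main obstacle, since a general monomorphism of $(\infty,n+1)$-categories need not be locally full. Two routes are available, and the second is the one to pursue first, since it avoids the fullness issue entirely. The first route is to observe that in our applications $\cat{A} \mono \cat{B}$ is the counit $\SubAdj^S(\cat{B}) \mono \cat{B}$, which is full on cells between those it contains. The second route uses \autoref{cor:riehl_verity}: the map $L^*\colon[\Adj_k,-]\to[c_k,-]$ is a monomorphism onto the left adjoints. So it suffices to show that the lifted $k$-cell $f^{\vee m}$ in $\cat{A}$ is a left adjoint in $\cat{A}$, which requires only the unit, counit and triangle identities. The triangle identities are equalities in $\ho_2$ of hom-categories. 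They hold in $\cat{B}$, and they transfer to $\cat{A}$ because $\cat{A} \mono \cat{B}$ is injective on the relevant mapping spaces. The resulting map $\Adj_k \to \cat{A}$ composes with the inclusion to a map agreeing with $g$ on $L$, and hence equals $g$ by \autoref{cor:riehl_verity}.
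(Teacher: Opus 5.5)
Your proof is correct and is essentially the argument the paper leaves implicit when it calls this statement an immediate corollary of \autoref{prop:free_full_adj}: you pass to the limit of mapping spaces over the zigzag to reduce to lifting each copy of $\Adj_k$, use that $[-,\cat{A}]\to[-,\cat{B}]$ is an inclusion of components (so the lifted $\eta,\epsilon$ have the correct boundaries and the triangle identities, being equalities in $\ho_2$, still hold in $\cat{A}$), and conclude with \autoref{cor:riehl_verity}. Commit to your second route and drop the first: the claim that $\SubAdj^S(\cat{B}) \mono \cat{B}$ is full on cells between cells it contains is false for cells whose degree lies in $S$, and you do not need it, because the triangle-identity cells are invertible.
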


\subsection{Invertibility}\label{subsec:invertibility}

In this final subsection, we compute the classifying spaces of free/walking adjunctions.
Contractibility of these spaces exactly corresponds to showing that any invertible morphism is fully-adjointable, which is part of the statement of \autoref{thm:invertibility_criterion}.

\begin{lemma}[Classifying Space of $\Adj$]\label{lem:adj_contractible}
    The localization of $\Adj$ by freely inverting $\eta$ and $\epsilon$ is the contractible category.
\end{lemma}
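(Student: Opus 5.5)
Since $\Adj$ is a $(2,2)$-category (even an $(\infty,2)$-category with only identity equivalences), I read ``freely inverting $\eta$ and $\epsilon$'' as the pushout in $\Cat{2}$ of $\Adj$ along the two maps $c_2 \sqcup c_2 \to \Adj$ picking out $\eta$ and $\epsilon$, against $c_2 \sqcup c_2 \to c_1 \sqcup c_1$ (collapsing each walking $2$-cell to the identity on its source). Call the result $\Adj[\eta^{-1},\epsilon^{-1}]$. I will show it is contractible in two steps. First, it is an $(\infty,1)$-category, namely the walking equivalence $E(c_1)$, i.e.\ the free groupoid on a $1$-cell. Second, $E(c_1)$ is contractible.

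\textbf{Step 1: all $2$-cells become invertible.} By the Schanuel--Street presentation, every $2$-morphism of $\Adj$ is a pasting (horizontal and vertical composite) of whiskerings of $\eta$ and $\epsilon$ and identities. In the localization, $\eta$ and $\epsilon$ are invertible, and whiskering and composition of invertible $2$-cells are invertible. So every $2$-cell of the localization is invertible. Here I want to argue as in \autoref{lem:partial_adj_segal}: compute the pushout as $\mathbb{L}_C\circ\mathbb{L}^\Delta_S$ of the pushout in $\Psh(\Delta^2)$. Every $2$-cell of the Segalification is a pasting of cells coming from $\Adj$ or from the inverted copies. The class of $2$-cells admitting inverses is closed under such pastings, and by \autoref{lem:completion_lifts} completion introduces no new cells to check. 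Hence the localization lies in the image of $\Cat{1}\hookrightarrow\Cat{2}$. This step is the main obstacle: making ``every $2$-cell is a pasting of invertible ones'' rigorous requires a version of \autoref{lem:partial_adj_segal} for invertibility of $2$-cells rather than adjoints of $1$-cells. I would try to deduce it directly from that lemma, since inverses are exactly adjoints whose unit and counit are invertible.

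\textbf{Step 2: identify it with the walking equivalence.} Once the result is known to be an $(\infty,1)$-category, I identify it by its universal property. For $\cat{D}\in\Cat{1}$, a map $\Adj[\eta^{-1},\epsilon^{-1}]\to\cat{D}$ is the same as a map $\Adj\to\cat{D}$, since $\eta$ and $\epsilon$ are automatically invertible in $\cat{D}$. Now $\Adj\to\cat{D}$ factors through $|\Adj|_{>1}$, so I need to understand maps out of $\Adj$ into an $(\infty,1)$-category. By \autoref{thm:riehl_verity}, $[\Adj,\cat{D}]\hookrightarrow[c_1,\cat{D}]$ is the inclusion of the components of left adjoints. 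In the $(\infty,2)$-category $\cat{D}$ with only invertible $2$-cells, a left adjoint is exactly an equivalence, since its unit and counit are invertible. So $[\Adj,\cat{D}]\simeq[E(c_1),\cat{D}]$ naturally in $\cat{D}$, and Yoneda gives $\Adj[\eta^{-1},\epsilon^{-1}]\simeq E(c_1)$.

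\textbf{Step 3: contractibility.} Finally, $E(c_1)$ is contractible. By completeness, the inclusion of a point $c_0\to E(c_1)$ is an equivalence in $\Cat{1}$; this is exactly the Rezk completeness condition, since $[E(c_1),\cat{D}]\simeq\cat{D}^{\simeq}\simeq[c_0,\cat{D}]$. Alternatively, this follows from the formula of \cite[Prop.~2.4]{AF18}, which exhibits $E(T)\to c_0$ as a completion equivalence. This step is routine; the only real content is Step 1.
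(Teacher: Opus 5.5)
Your Steps 2 and 3 are sound, but Step 1 is a genuine gap, as you suspect. The reduction you hope for is not available. \autoref{lem:partial_adj_segal} only handles $1$-cells and composition in the first Segal direction, and \autoref{def:not_segat_adj} only makes sense for $k<n$. To conclude that every $2$-cell of the pushout becomes invertible, you would need a $2$-cell analogue closed under both horizontal and vertical pastings. You would also have to track how the two Segalification steps identify cells such as $L\eta$ or $\epsilon L$ with composites of degenerate cells. These survive in the presheaf pushout as single non-degenerate cells of $\Adj$, not as pastings of the collapsed copies of $\eta,\epsilon$. This is exactly the general pasting statement the paper calls impractical and avoids. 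Since your Yoneda argument in Step 2 only tests against $\cat{D}\in\Cat{1}$, it really does need Step 1 to know that the localization lies in $\Cat{1}$.

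The missing observation is that Step 1 is unnecessary: run Step 2 against arbitrary $\cat{D}\in\Cat{2}$.
\begin{itemize}
\item By completeness, $[c_1,\cat{D}]\to[c_2,\cat{D}]$ is the inclusion of the components of invertible $2$-cells, so $c_2\to c_1$ is an epimorphism. Hence so is its pushout $\Adj\to\Adj[\eta^{-1},\epsilon^{-1}]$.
\item Therefore $[\Adj[\eta^{-1},\epsilon^{-1}],\cat{D}]\hookrightarrow[\Adj,\cat{D}]$ is the union of the components on which $\eta$ and $\epsilon$ go to invertible $2$-cells.
\item Composing with the monomorphism $L^*$ of \autoref{thm:riehl_verity}, you get exactly the components of $[c_1,\cat{D}]$ consisting of left adjoints with invertible unit and counit. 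These are the equivalences, i.e.\ $[c_1,\cat{D}]_{\simeq}\simeq[c_0,\cat{D}]$ by completeness.
\item This is the image of the map induced by $\Adj[\eta^{-1},\epsilon^{-1}]\to c_0$, so Yoneda in $\Cat{2}$ concludes.
\end{itemize}
The fact that the localization is an $(\infty,1)$-category then becomes an output rather than an input.

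This is the paper's proof. It notes that $c_1\to\Adj\to\Adj[\eta^{-1},\epsilon^{-1}]\to\{\bullet\}$ composes to an epimorphism, so the last map is one too. This yields monomorphisms $[c_1,\cat{C}]_{\simeq}\hookrightarrow[\Adj[\eta^{-1},\epsilon^{-1}],\cat{C}]\hookrightarrow[\Adj,\cat{C}]\hookrightarrow[c_1,\cat{C}]$. The first of these is surjective on components by the classical fact that an adjunction with invertible unit and counit is an equivalence.
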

\begin{proof}
    In $\Cat{n}$, the terminal functor $c_1 \to \{\bullet\}$ can be seen as a localization or truncation.
    Indeed, using completeness, precomposing by this map induces the inclusion $[c_1, \cat{C}]_{\simeq} \mono [c_1, \cat{C}]$ of the connected component of all invertible $1$-morphisms (for example, see \cite[Thm. $6.2$ and Prop. $6.4$]{Rez01}). 
    We have a factorization $c_1 \epi \Adj \epi \Adj[\eta^{-1}, \epsilon^{-1}] \to \{\bullet\}$ where the composition is epimorphic.
    This implies that $\Adj[\eta^{-1}, \epsilon^{-1}] \epi \{\bullet\}$ is epimorphic as well and hence, by applying any $[-, \cat{C}]$, we get
    \[
        {[c_1, \cat{C}]_{\simeq} \mono [\Adj[\eta^{-1}, \epsilon^{-1}], \cat{C}] \mono [\Adj, \cat{C}] \mono [c_1, \cat{C}]}
    \]
    To show that $\Adj[\eta^{-1}, \epsilon^{-1}] \epi \{\bullet\}$ is an equivalence, it is sufficient to show that: given any adjunction $L \in \begin{tikzcd}
        a & b
        \arrow[""{name=0, anchor=center, inner sep=0}, curve={height=-6pt}, from=1-1, to=1-2]
        \arrow[""{name=1, anchor=center, inner sep=0}, curve={height=-6pt}, from=1-2, to=1-1]
        \arrow["\dashv"{anchor=center, rotate=-90}, draw=none, from=0, to=1]
    \end{tikzcd} \in R \in \cat{C}$, if $\eta$ and $\epsilon$ are invertible, then $L$ is invertible.
    Invertibility is a property detected by truncation to the homotopy $(1,1)$-category, where this is a classical result of ordinary category theory.
\end{proof}

Similarly, we get the following:

\begin{lemma}[Classifying Space of Higher Walking Adjoints]\label{lem:free_contractible}
    Let $1 \leq k \leq n$ and $S \subset \{k, \ldots, n\}$.
    The functor $\Free^S(c_k) \epi c_{k-1}$ is an epimorphism and exhibits $c_{k-1}$ as the localization at all $i$-morphisms for $i > k$, meaning that it is the reflection map in the localization $|-|_{> k} \in \Cat{n+1} \to \Cat{k}$.
\end{lemma}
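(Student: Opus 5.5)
The strategy is to verify the universal property of the reflection directly. We test against every $(\infty,n+1)$-category $\cat{C}$ in which all $i$-morphisms of the relevant dimensions are invertible, that is, every $\cat{C}$ in the essential image of the inclusion $\Cat{k} \mono \Cat{n+1}$ (suitably reindexed so that the $k$-morphisms and above are invertible). For such $\cat{C}$ we show that precomposition
\[
    [c_{k-1}, \cat{C}] \longrightarrow [\Free^S(c_k), \cat{C}]
\]
is an equivalence. We also show that $\Free^S(c_k) \to c_{k-1}$ is an epimorphism. The map itself comes from the universal property of $\Free^S$ in \autoref{prop:Free_Sub_exist}: $c_{k-1}$, viewed in $\Cat{n+1}$, has only identity $i$-morphisms for $i \geq k$, and identities are trivially self-adjoint, so $c_{k-1} \inset \Cat{n+1}^{S\textnormal{-adj}}$. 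Hence the terminal functor $c_k \to c_{k-1}$ (collapsing the top cell) factors uniquely through the unit $c_k \epi \Free^S(c_k)$.

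\textbf{Step 1: local objects have $S$-adjoints.} If every $i$-morphism of $\cat{C}$ with $i \geq k$ is invertible, then $\cat{C} \inset \Cat{n+1}^{S\textnormal{-adj}}$. Indeed, an invertible $i$-morphism $f$ can be promoted to an adjoint equivalence in the homotopy $2$-category $\ho_2$ of the relevant hom-category. By \autoref{cor:riehl_verity}, this promotion yields a lift $\Adj_i \to \cat{C}$ along both $L$ and $R$.

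\textbf{Step 2: identify the mapping spaces.} By Step 1 and the adjunction $\Free^S \dashv \mathrm{incl}$, we get $[\Free^S(c_k), \cat{C}] \simeq [c_k, \cat{C}]$ via restriction along the unit. It remains to see that $[c_{k-1}, \cat{C}] \to [c_k, \cat{C}]$ is an equivalence. By completeness (\cite[Thm. $6.2$, Prop. $6.4$]{Rez01}, suspended by \autoref{const:suspension}, exactly as in the proof of \autoref{lem:adj_contractible}), this restriction is the inclusion of the components of invertible $k$-morphisms. In our $\cat{C}$ every $k$-morphism is invertible, so the inclusion is an equivalence. The composite of these identifications is precomposition by our map, because everything factors through $c_k \epi \Free^S(c_k) \to c_{k-1}$. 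This gives the reflection statement.

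\textbf{Step 3: epimorphism.} For arbitrary $\cat{D}$, the composite $c_k \epi \Free^S(c_k) \to c_{k-1}$ is the collapse map. Precomposition along the collapse map is the monomorphism $[c_{k-1}, \cat{D}] \mono [c_k, \cat{D}]$ from Step 2. Precomposition along the unit is also a monomorphism, by \autoref{prop:Free_Sub_exist}. The usual cancellation (if $g \circ f$ induces a monomorphism on mapping spaces, then so does the second factor's precomposition) then gives that $[c_{k-1}, \cat{D}] \to [\Free^S(c_k), \cat{D}]$ is a monomorphism. Hence $\Free^S(c_k) \epi c_{k-1}$.

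\textbf{Main obstacle.} I expect Step 1 to be the main difficulty. The content of Step 1 is that invertible morphisms are (homotopy-coherently) adjoint. It must hold at every level $i \inset S$ simultaneously, and that includes the higher cells introduced by freely adding adjoints. I would handle it dimensionwise: adjointness of an $i$-morphism is a property detected in $\ho_2$ of the appropriate hom-category. An equivalence there extends to an adjoint equivalence by the classical argument. \autoref{cor:riehl_verity} then promotes this to an actual lift from $\Adj_i$, with no coherence issues left over.
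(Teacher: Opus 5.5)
\textbf{Gap: the class of local objects was changed.} Your Step 3 is correct and matches the paper's argument for the epimorphism claim. You are also right to invoke that the unit $c_k \epi \Free^S(c_k)$ is an epimorphism, since the $\infty$-categorical cancellation needs it.

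The problem is the ``reindexing'' at the start. The lemma says $\Free^S(c_k)\to c_{k-1}$ is the unit of $|-|_{>k}\colon\Cat{n+1}\to\Cat{k}$. So the local objects you must test against are the $\cat{C}$ whose $i$-morphisms are invertible for $i>k$, with \emph{arbitrary} $k$-morphisms. You only test against those whose $k$-morphisms are also invertible, i.e.\ the image of $\Cat{k-1}$. That proves the reflection statement for $|-|_{>k-1}$, which is strictly weaker and says nothing about adjoints.

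A symptom of this: your argument never uses $k\in S$, and it works verbatim for $S=\emptyset$. There $\Free^{\emptyset}(c_k)=c_k$, and the actual claim fails because $|c_k|_{>k}=c_k$. So the lemma is really about $k\in S$, which is what the paper's proof assumes.

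For the correct test class, Step 1 is false. Take $\cat{C}=c_k$: it lies in $\Cat{k}$, but its $k$-morphism has no adjoint. Hence $[\Free^S(c_k),\cat{C}]\simeq[c_k,\SubAdj^S(\cat{C})]$ is a proper subspace of $[c_k,\cat{C}]$, and Step 2 does not go through. No reindexing was needed. $c_{k-1}$ is the correct answer for $|-|_{>k}$ because, once the $(k+1)$-cells are inverted, the generating $k$-morphism becomes invertible and completeness collapses it.

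\textbf{The missing idea.} You need the converse of what you flagged as the main obstacle. In a $\cat{C}$ whose $(k+1)$-morphisms are invertible, a $k$-morphism with an adjoint is invertible: its unit and counit are invertible, so \autoref{lem:adj_contractible} applies. This is the actual content of the lemma, and the paper proves it in two steps.
\begin{itemize}
\item For $S=\{k\}$, it applies the left adjoint $|-|_{>k}$ to the colimit presentation of $\Free^{\{k\}}(c_k)$ from \autoref{prop:free_full_adj}. Since $|-|_{>k}\circ\sigma^{k-1}\simeq\sigma^{k-1}\circ|-|_{>1}$, \autoref{lem:adj_contractible} sends each $\Adj_k$ to $c_{k-1}$. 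The colimit over the contractible zigzag is then $c_{k-1}$.
\item For $S=\{k\}\sqcup S'$, it writes $\Free^S=\Free^{S'}\circ\Free^{\{k\}}$ (\autoref{prop:free_sub_compose}). Every $\cat{C}\in\Cat{k}$ automatically has $S'$-adjoints, because $S'\subset\{k+1,\ldots,n\}$ and those cells are invertible. This is where your Step 1 reasoning is genuinely needed. It gives $[\Free^S(c_k),\cat{C}]\simeq[\Free^{\{k\}}(c_k),\cat{C}]\simeq[c_k,\cat{C}]_{\simeq}\simeq[c_{k-1},\cat{C}]$.
\end{itemize}
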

\begin{proof}
    As above, the composition $c_k \epi \Free^S(c_k) \to c_{k-1}$ is epimorphic, implying the first part of the statement.
    Note that for $1 \leq l < k$, we have $\Free^{\{l\}}(c_k) = c_k \epi c_{k-1}$ and the result is a consequence of completeness.
    By \autoref{prop:free_sub_compose}, we can assume without loss of generality that $S = \{k\} \sqcup S'$ with $S' \subset \{k+1, \ldots, n\}$.
    We proceed by downward induction on $|S|$.
    \begin{enumerate}
        \item \textbf{Base case $|S| = 1$:} The truncation functor $|-|_{> k}$, being a left adjoint, will preserve the colimit in \autoref{prop:free_full_adj}.
        Because $|-|_{> k} \circ \sigma^{k-1} \simeq \sigma^{k-1} \circ |-|_{>1}$ and because of \autoref{lem:adj_contractible} above, the diagram is sent to the terminal diagram $n \inset \mathbb{Z}^{\to\leftarrow} \mapsto c_k$.
        As $\mathbb{Z}^{\to\leftarrow}$ is contractible, we get $|\Free^{\{k\}}(c_k)|_{> k} \simeq c_{k-1}$.

        \item \textbf{Inductive step:} Let $|S| > 1$.
        Similarly to the argument of \autoref{lem:adj_contractible}, we want to show that the inclusion $[c_k, \cat{C}]_{\simeq} \mono  [|\Free^S(c_k)|_{> k}, \cat{C}]$ is an equivalence.
        By \autoref{prop:free_sub_compose}, we have $[|\Free^S(c_k)|_{> k}, \cat{C}] \simeq [\Free^S(c_k), \cat{C}_{\leq k}] \simeq [\Free^{S'} \circ \Free^{\{k\}}(c_k), \cat{C}_{\leq k}] \simeq [\Free^{\{k\}}(c_k), \SubAdj^{S'}(\cat{C}_{\leq k})] \simeq [\Free^{\{k\}}(c_k), \cat{C}_{\leq k}] \simeq [c_k, \cat{C}_{\leq k}]_{\simeq}$ by the base case above.
    \end{enumerate}
\end{proof}

\section{Dualizability Data as Factorization Homology}\label{sec:factorization_homology}

The goal of this section is to establish the algebraic and geometric elements of the proof of our criteria.
In \autoref{subsec:homological} and \autoref{subsec:handlebodies}, we use factorization homology \cite{AFT17a, AFR18, AFT17} to construct the iterated ``(co)units of (co)units of (co)units of \ldots'', which we call adjunction data.
In \autoref{subsec:dual}, we prove a criterion for existence of adjoints of top-level morphisms in the higher Morita categories (see \cite{Lur09,Sch14, Hau17a, Hau23} for its definition).
Finally, in \autoref{subsec:deligne}, we identify some units in the dualizability data with universal algebra-map to some higher Hochschild Cohomology.

\begin{convention}[Fixing $\V$]
    Throughout the rest of this section, we fix $\V \inset \EAlg{\infty}\left(\Cat{1}^\geom\right)$, a symmetric monoidal $(\infty,1)$-category with geometric realizations and compatible tensor product.
\end{convention}

\subsection{Homological Commutative Diagrams}\label{subsec:homological}

This first subsection establishes a key technical tool to \textit{transfer} algebraic structures ``carried'' by different stratified spaces, which is stated in the form of \autoref{lem:commutative_diagrams}.
We start with a quick overview of the formalism of factorization homology of conically smooth stratified spaces developed by Ayala, Francis, Rozenblyum and Tanaka in \cite{AFT17, AFR18, AFT17a}.
All the stratified spaces we will consider will be conically smooth \cite[Subsec. $4.1$]{AFT17a} and finitary \cite[Def. $8.3.6$]{AFT17a}, so we omit these adjectives in the rest of this paper.
We slightly modify the original notations of \cite{AFT17a} as follows:
\begin{enumerate}
    \item $\disk$ will denote the symmetric monoidal $(1,1)$-category of ``disks'' with open embeddings: finite disjoint unions of basic stratified spaces, see \cite[Subsec. $3.2.$]{AFT17a} for their inductive definition,
    \item $\mfld$ will denote the symmetric monoidal $(1,1)$-category of stratified spaces with open embeddings,
    \item $\Disk$ will denote the symmetric monoidal $(\infty,1)$-category of disks with open embedding spaces see \cite[Subsec. $4.1$]{AFT17a} (denoted $\Snglr$ in the original text),
    \item $\Mfld$ will denote the symmetric monoidal $(\infty,1)$-category of stratified spaces with open embedding spaces (denoted $\Snglr$ in the original text).
\end{enumerate}

We now briefly recall the operadic notions required to talk about factorization homology of stratified spaces.
Once again, we slightly modify the original notations of \cite{AFT17} to fit our exposition.

\begin{definition}[Marked Operad]\label{def:marked_operad}
    An operad $\operad{O} \to \Fin_*$ is said to be \emph{marked} if it is endowed with a collection of coCartesian morphisms called \emph{pre-coCartesian} which contains all inert morphisms.
    For two marked operads $\operad{A}, \operad{B} \to \Fin_*$, the category of algebras $\Alg{\operad{A}}(\operad{B}) \mono [\operad{A}, \operad{B}]_{/\Fin^*}$ is the full-$(\infty,1)$-subcategory of functors over $\Fin^*$ on those functors which send pre-coCartesian morphisms to pre-coCartesian morphisms.
\end{definition}

Note that usual operads can be seen as marked operads by declaring only inert morphisms to be pre-coCartesian,
and symmetric monoidal $(\infty,1)$-categories can be seen as marked operads by declaring all coCartesian morphisms to be pre-coCartesian.
Recall from \cite[Subsubsection $2.4.3$]{Lur17} that given a category $\mathcal{O}$, we can form an operad $\mathcal{O}^\sqcup \to \Fin_*$ whose objects over $\mathbf{n}^+$ are $n$-tuples of objects of $\mathcal{O}$.
We can then directly adapt the following result from \cite{AFT17}.

\begin{lemma}[Pullback of Marked Operads along Right Fibrations, {\cite[Cor. $1.20$]{AFT17}}]\label{lem:pullback_marked_operad}
    Let $\operad{O} \to \Fin_*$ be a marked operad and $\mathcal{E} \to \mathcal{O}$ be a right fibration.
    The pullback
    \[\begin{tikzcd}
        {\operad{E}} & {\mathcal{E}^\sqcup} \\
        {\operad{O}} & {\mathcal{O}^\sqcup} \\
        & {\Fin^*}
        \arrow[dashed, from=1-1, to=1-2]
        \arrow[dashed, from=1-1, to=2-1]
        \arrow["\lrcorner"{anchor=center, pos=0.125}, draw=none, from=1-1, to=2-2]
        \arrow[from=1-2, to=2-2]
        \arrow[from=2-1, to=2-2]
        \arrow[from=2-1, to=3-2]
        \arrow[from=2-2, to=3-2]
    \end{tikzcd}\]
    exhibit $\operad{E}$ as a marked operad with pre-coCartesian morphisms those mapping to pre-coCartesian morphisms in $\operad{O}$.
\end{lemma}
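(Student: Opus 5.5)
The statement to prove is \autoref{lem:pullback_marked_operad}. The plan is to verify the operad axioms for $\operad{E} = \operad{O} \times_{\mathcal{O}^\sqcup} \mathcal{E}^\sqcup$ directly, then the marking. Two preliminary remarks make this feasible.

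First, $\mathcal{E}^\sqcup \to \mathcal{O}^\sqcup$ is a map of operads over $\Fin_*$, so the pullback is a priori an object over $\Fin_*$. The map $\operad{O} \to \mathcal{O}^\sqcup$ is the canonical one recording, over $\mathbf{n}^+$, the $n$-tuple of colours; it is obtained from the inert maps $\rho^i$ and the Segal decomposition $\operad{O}_{\mathbf{n}^+} \simeq \mathcal{O}^{\times n}$.

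Second, the key structural input is that $\mathcal{E}^\sqcup \to \mathcal{O}^\sqcup$ is itself a right fibration over $\Fin_*$. A morphism in $\mathcal{O}^\sqcup$ over $\alpha\colon \mathbf{m}^+ \to \mathbf{n}^+$ is a family of maps $O_i \to O'_{\alpha(i)}$ in $\mathcal{O}$ for the $i$ not sent to the basepoint. Given a target tuple $(E'_j)$, lifts of such a family are unique up to contractible choice, because $\mathcal{E} \to \mathcal{O}$ is a right fibration and the lifting problem splits coordinatewise. Right fibrations are stable under pullback, so $\operad{E} \to \operad{O}$ is a right fibration.

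With these in hand, I would check Lurie's operad axioms for $\operad{E}$ as follows.
\begin{enumerate}
\item \textbf{Inert lifts.} For an inert $\alpha\colon \mathbf{m}^+ \to \mathbf{n}^+$ and $e \in \operad{E}_{\mathbf{m}^+}$ lying over $x \in \operad{O}$, take the coCartesian lift $x \to x'$ in $\operad{O}$. Its image in $\mathcal{O}^\sqcup$ is a family of identities, or more generally equivalences, indexed by the coordinates kept by $\alpha$. It therefore lifts to $\mathcal{E}^\sqcup$ by forgetting coordinates of $e$. The resulting morphism is coCartesian in $\operad{E}$: mapping spaces in $\operad{E}$ are pullbacks of those in $\operad{O}$ along the fibres of the right fibration, and the coCartesian condition on the $\operad{O}$ component transfers to $\operad{E}$. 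This transfer uses that the fibres of $\mathcal{E}^\sqcup \to \mathcal{O}^\sqcup$ over $(O_i)$ are the products $\prod_i \mathcal{E}_{O_i}$.
\item \textbf{Segal condition.} Pulling back along $\operad{O}_{\mathbf{n}^+} \simeq \mathcal{O}^{\times n}$ and $\mathcal{E}^\sqcup_{\mathbf{n}^+} \simeq \mathcal{E}^{\times n}$ gives $\operad{E}_{\mathbf{n}^+} \simeq \mathcal{E}^{\times n}$. The multimapping-space decomposition follows the same way, since mapping spaces in a pullback are pullbacks of mapping spaces.
\end{enumerate}

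For the marking, declare a morphism of $\operad{E}$ pre-coCartesian if its image in $\operad{O}$ is pre-coCartesian. Inert morphisms of $\operad{E}$ map to inert morphisms of $\operad{O}$, so the marked class contains all inerts. The remaining point is that each marked morphism is actually coCartesian in $\operad{E}$, and this is the main obstacle. Existence of a lift is fine: take the coCartesian lift $x \to x'$ in $\operad{O}$ and lift its image to $\mathcal{E}^\sqcup$. The difficulty is that a right fibration lifts morphisms along their \emph{target}, not their source. I would therefore not construct lifts from a given source in $\operad{E}$. Instead I would show that every $\operad{E}$-morphism lying over a pre-coCartesian $\operad{O}$-morphism is coCartesian, by a mapping-space computation. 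For $f\colon e \to e'$ over $\bar f\colon x \to x'$ and any $e''$ over $x''$, the square
\[
\operad{E}(e',e'') \to \operad{E}(e,e''), \qquad \operad{O}(x',x'') \to \operad{O}(x,x'')
\]
has fibres over a point $g$ equal to the spaces of lifts of $g$ (respectively $g\bar f$) with prescribed source and target. These fibres are identified using the right-fibration property at the fixed target $e''$ together with uniqueness of lifts. So the square is a pullback, which makes $f$ relatively coCartesian over $\operad{O}$, and hence coCartesian over $\Fin_*$ by composing with the coCartesianness of $\bar f$. This is exactly the content of \cite[Cor. $1.20$]{AFT17}, and I would follow its argument, checking only that the markings match.
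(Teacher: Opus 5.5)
The step that fails is the one you flag as the main obstacle: showing that every morphism $f\colon e\to e'$ of $\operad{E}$ lying over a pre-coCartesian $\bar f\colon x\to x'$ (over $\alpha$ in $\mathrm{Fin}_*$) is coCartesian. A right fibration gives unique lifts only when the \emph{target} is prescribed. Once both source and target are fixed, the space of lifts is a path space in a fibre.

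Concretely, write $\mathcal{E}_O$ for the fibre of $\mathcal{E}\to\mathcal{O}$ over $O$. Take $g$ in $\mathrm{Map}_{\operad{O}}(x',x'')$ lying over $\beta$, with components $g_j\colon O'_j\to O''_{\beta(j)}$ in $\mathcal{O}^\sqcup$. The fibre of $\mathrm{Map}_{\operad{E}}(e',e'')\to\mathrm{Map}_{\operad{O}}(x',x'')$ over $g$ is $\prod_{\beta(j)\neq *}\mathrm{Map}_{\mathcal{E}_{O'_j}}(E'_j,g_j^*E''_{\beta(j)})$. Precomposition with $f$ sends the $j$-th factor diagonally into $\prod_{i\in\alpha^{-1}(j)}\mathrm{Map}_{\mathcal{E}_{O_i}}(\bar f_i^*E'_j,\bar f_i^*g_j^*E''_{\beta(j)})$. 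This is an equivalence in the inert case (singleton preimages, $\bar f_i$ equivalences), which is why your inert lifts are coCartesian. It is not an equivalence in general.

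In fact the assertion you are trying to prove is false in the paper's own example. In $\uDisk_{/\mathbb{R}^n}$, choose an embedding $\iota\colon\mathbb{R}^n\sqcup\mathbb{R}^n\hookrightarrow\mathbb{R}^n$ with restrictions $\iota_1,\iota_2$. Let $f\colon(\iota_1,\iota_2)\to(\iota)$ be the identity of $\mathbb{R}^n\sqcup\mathbb{R}^n$ over the active map $\mathbf{2}^+\to\mathbf{1}^+$; it lies over a coCartesian morphism of $\mathrm{Disk}^\sqcup$. Take $e''=(\mathbb{R}^n\xrightarrow{\mathrm{id}}\mathbb{R}^n)$. Maps $(\iota)\to e''$ form a contractible space. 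Active maps $(\iota_1,\iota_2)\to e''$ form the homotopy fibre of $\mathrm{Emb}(\mathbb{R}^n\sqcup\mathbb{R}^n,\mathbb{R}^n)\to\mathrm{Emb}(\mathbb{R}^n,\mathbb{R}^n)^2$, which is $\mathrm{Conf}_2(\mathbb{R}^n)\simeq S^{n-1}$. So $f$ is not coCartesian.

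No argument can close this step. Despite the wording of the paper's definition of marked operads, the marking on $\operad{E}$ has to be read as a distinguished class of morphisms containing the inerts, which algebras are required to send to coCartesian morphisms. It is not a class of coCartesian morphisms. The paper gives no argument of its own beyond citing \cite{AFT17}. What is provable, and all that is used later, is that $\operad{E}$ is an operad and that this class contains the inerts.

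There are three further errors to correct.

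\textbf{Right fibration claim.} Your ``key structural input'' is false: $\mathcal{E}^\sqcup\to\mathcal{O}^\sqcup$ is not a right fibration. Over an inert map dropping the $i$-th coordinate, the dropped coordinate of the source is an arbitrary object of $\mathcal{E}_{O_i}$. The space of lifts is therefore $\mathcal{E}_{O_i}$, which may be non-contractible or empty. Only the restriction to morphisms over active maps is a right fibration. Hence $\operad{E}\to\operad{O}$ is not a right fibration either, and in your first step the coCartesianness of inert lifts should come from the fibrewise computation above, not from pullback-stability of right fibrations.

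\textbf{Existence of lifts.} Your remark that ``existence of a lift is fine'' is also wrong: $\mathcal{E}\to\mathcal{O}$ does not let you push objects forward along morphisms. This does no harm only because the definition never asks for such lifts.

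\textbf{The map $\operad{O}\to\mathcal{O}^\sqcup$.} This map is not obtained from the $\rho^i$ and the Segal decomposition alone. An active operation $(O_1,O_2)\to O'$ must be sent to a pair of maps $O_i\to O'$ in $\mathcal{O}$, which requires the nullary operations of $\operad{O}$, i.e.\ unitality. This holds for $\mathrm{Disk}^\sqcup$, where $\emptyset$ is initial.

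With these repairs, your inert-lift and Segal checks are the entire content the lemma can have.
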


\begin{definition}[{\cite[Const. $2.4$]{AFT17}}]\label{def:operads}
    Using \autoref{lem:pullback_marked_operad} above, for $X$ a stratified space, we define
    \begin{enumerate}
        \item $\udisk_{/X}$ as the marked $(1,1)$-operad obtained from the slice right fibration $\disk_{/X} \to \disk$ and the symmetric monoidal structure on $\disk$,
        \item $\umfld_{/X}$ as the marked $(1,1)$-operad obtained from the slice right fibration $\mfld_{/X} \to \mfld$ and the symmetric monoidal structure on $\mfld$,
        \item $\uDisk_{/X}$ as the marked $(\infty,1)$-operad obtained from the slice right fibration $\Disk_{/X} \to \Disk$ and the symmetric monoidal structure on $\Disk$,
        \item $\uMfld_{/X}$ as the marked $(\infty,1)$-operad obtained from the slice right fibration $\Mfld_{/X} \to \Mfld$ and the symmetric monoidal structure on $\Mfld$,
    \end{enumerate}
    To lighten notations, we let $\Alg{X}(\operad{O})$ denote $\Alg{\uDisk_{/X}}(\operad{O})$ for $\operad{O}$ a marked operad.
    More explicitly, the operad $\uDisk_{/X}$ is given as follows:
    \begin{enumerate}
        \item its objects lying over $\mathbf{n}^+$ are collections $(\bigsqcup_{1 \leq i \leq k_j} B_{i,j} \mono X)_{1 \leq j \leq n}$ of embeddings of a finite disjoint union of basics into $X$,
        \item the space of morphisms from $a = (\bigsqcup_{1 \leq i \leq k_j} B_{i,j} \mono X)_{1 \leq j \leq n}$ to $b = (\bigsqcup_{1 \leq i \leq l_j} B'_{i,j} \mono X)_{1 \leq j \leq m}$ over $\phi \in \mathbf{n}^+ \to \mathbf{m}^+ \in  \Fin_*$ of the pullback in spaces 
\[\begin{tikzcd}
	{\Disk(\B)_{/X}(a,b)} & {\{\bullet\}} \\
	{\prod_{1 \leq j' \leq m} \Emb(\bigsqcup_{j \inset \phi^{-1}(j')}\bigsqcup_{1 \leq i \leq k_j} B_{i,j}, \bigsqcup_{1 \leq i \leq l_{j'}} B'_{i,j'})} & {\prod_{j \inset \phi^{-1}(\mathbf{m})}\Emb(\bigsqcup_{1 \leq i \leq k_j} B_{i,j}, X)}
	\arrow[dashed, from=1-1, to=1-2]
	\arrow[dashed, from=1-1, to=2-1]
	\arrow[from=1-2, to=2-2]
	\arrow[""{name=0, anchor=center, inner sep=0}, from=2-1, to=2-2]
	\arrow["\lrcorner"{anchor=center, pos=0.125, rotate=45}, draw=none, from=1-1, to=0]
\end{tikzcd}\]
        \item a morphism above is pre-coCartesian if each of the maps $\bigsqcup_{j \inset \phi^{-1}(j')}\bigsqcup_{1 \leq i \leq k_j} B_{i,j} \mono \bigsqcup_{1 \leq i \leq l_{j'}} B'_{i,j'})$ in the bottom left space is an isotopy equivalence.
    \end{enumerate}

    Note that these constructions are functorial in $X$ with respect to open embeddings, as it directly follows from the slice construction which is itself functorial.
    In particular, we have an action $\Aut(X) \curvearrowright \uDisk_{/X}$ by equivalences of marked operads.
\end{definition}

Using the equivalence of \cite[Thm. $2.43$]{AFT17}, one can define homology theories as follows.

\begin{definition}[Homology Theories {\cite[Def. $2.37$]{AFT17}}]\label{def:homology_theory}
    For $X$ a stratified space, the category of \emph{homology theories} $\cat{H}(\uMfld_{/X}, \V)$ is the full-$(\infty,1)$-subcategory
    \[
        \cat{H}(\uMfld_{/X}, \V) \mono \Alg{\uMfld_{/X}}(\V)
    \]
    on those morphisms of operads $\cat{H}$ such that the following equivalent conditions are satisfied:
    \begin{enumerate}
        \item For all $Z \inset \Mfld_{/X}$, the canonical morphism
        \[
            \Colim\left( \Disk_{/Z} \to \Mfld_{/X} \to \V \right) \longeq \cat{H}(Z)
        \]
        is an equivalence.
        \item For all $Z \cong Z_- \cup_{\mathbb{R} \times Z_0} Z_+$ collar-gluing over $X$, the canonical morphism
        \[
            \left| \cat{H}(Z_-) \otimes \cat{H}(\mathbb{R} \times Z_0)^{\otimes \bullet} \otimes \cat{H}(Z_+) \right|_\bullet \longeq \cat{H}(Z)
        \]
        is an equivalence.
    \end{enumerate}
    Similarly, we define the category of \textit{pre-homology theories} as the full-$(\infty,1)$-subcategory $H(\umfld_{/X}, \V) \mono \Alg{\umfld_{/X}}(\V)$ on those algebras which satisfy the corresponding colimit conditions.
\end{definition}

In the original text of \cite{AFT17}, this definition is given for $\V$ a symmetric monoidal $(\infty,1)$-category with sifted colimits and whose monoidal structure distributes over sifted colimits.
It's a harmless generalization to replace sifted colimits by geometric realizations, as long as we only consider \textit{finitary} stratified spaces, meaning those obtained from basics via a sequence of collar-gluing \cite[Def. $8.3.6$]{AFT17a}, as is the case here.
Indeed, it is proven in \cite[Cor. $2.38$]{AFT17} that for a collar-gluing $X \cong X_- \cup_{\mathbb{R} \times X_0} X_+$, we have a geometric realization
\[
    \left| \Disk_{/X_-} \times \Disk_{/\mathbb{R} \times X_0}^\bullet \times \Disk_{/X_+ } \right|_\bullet \longeq \Disk_{/X}
\]
so that the collection of $\Disk_{/-}$-indexed colimits is generated by geometric realizations, and thus also the collection of $\disk_{/-}$-indexed colimits, as $\disk_{/X} \to \Disk_{/X}$ is a localization \cite[Prop. $2.22$]{AFT17} and hence final.

\begin{corollary}[Disk-Algebras are the coefficients of Homology Theories]\label{cor:algebra_homology}
    Let $X$ be a stratified space.
    By left Kan extension along the inclusions $\disk_{/X} \mono \mfld_{/X}$ and $\Disk_{/X} \mono \Mfld_{/X}$, we have the commutative diagram
\[\begin{tikzcd}
	{\Alg{X}(\V)} & {\cat{H}(\uMfld_{/X}, \V)} & {\Alg{\uMfld_{/X}}(\V)} \\
	\\
	{\Alg{\udisk_{/X}}(\V)} & {H(\umfld_{/X}, \V)} & {\Alg{\umfld_{/X}}(\V)}
	\arrow["\sim"{marking, allow upside down}, from=1-1, to=1-2]
	\arrow[hook, from=1-1, to=3-1]
	\arrow[hook, from=1-2, to=1-3]
	\arrow[hook, from=1-2, to=3-2]
	\arrow[from=1-3, to=3-3]
	\arrow["\sim"{marking, allow upside down}, from=3-1, to=3-2]
	\arrow[hook, from=3-2, to=3-3]
\end{tikzcd}\]
\end{corollary}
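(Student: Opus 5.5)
The diagram has four parts: two horizontal equivalences, two vertical inclusions, and a commuting square. I will treat each in turn, using the left Kan extensions along $\iota\colon\Disk_{/X}\mono\Mfld_{/X}$ and $j\colon\disk_{/X}\mono\mfld_{/X}$.

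\emph{Horizontal equivalences.} The top one is the identification of \cite[Thm. $2.43$]{AFT17}. Restriction along $\iota$ is a functor $\Alg{\uMfld_{/X}}(\V)\to\Alg{X}(\V)$. Operadic left Kan extension along $\iota$ exists because $\V$ has geometric realizations compatible with $\otimes$. By the discussion above, for finitary $X$ the $\Disk_{/Z}$-indexed colimits are generated by geometric realizations, via \cite[Cor. $2.38$]{AFT17}. The value of this left Kan extension on $Z$ is $\Colim(\Disk_{/Z}\to\V)$. This is condition (1) of \autoref{def:homology_theory}, so the left Kan extension lands in $\cat{H}(\uMfld_{/X},\V)$. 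Moreover, condition (1) says exactly that the counit is an equivalence on homology theories. The unit is an equivalence because $\Disk_{/X}\mono\Mfld_{/X}$ is fully faithful, so the colimit over $\Disk_{/B}$ for a disk $B$ has a terminal object. Hence restriction and left Kan extension are inverse equivalences $\Alg{X}(\V)\simeq\cat{H}(\uMfld_{/X},\V)$.

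The bottom equivalence $\Alg{\udisk_{/X}}(\V)\simeq H(\umfld_{/X},\V)$ follows verbatim with $\disk$, $\mfld$ in place of $\Disk$, $\Mfld$. The only input is again that $\disk_{/Z}$-indexed colimits exist in $\V$. This holds because $\disk_{/Z}\to\Disk_{/Z}$ is a localization \cite[Prop. $2.22$]{AFT17}, hence final, so these colimits reduce to the $\Disk_{/Z}$ case.

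\emph{Vertical arrows.} They are given by precomposition with the functors of operads $\udisk_{/X}\to\uDisk_{/X}$ and $\umfld_{/X}\to\uMfld_{/X}$. Both are localizations of the underlying categories, and they are compatible with the markings. A localization gives a fully faithful restriction on functor categories, and the full subcategories of algebras inherit this. So the left vertical arrow is fully faithful, i.e.\ a monomorphism. It remains to check that the middle vertical arrow lands in $H(\umfld_{/X},\V)$. By finality of $\disk_{/Z}\to\Disk_{/Z}$, the colimit condition over $\Disk_{/Z}$ is equivalent to the one over $\disk_{/Z}$.

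\emph{Commutativity.} Each square is built from restrictions along commuting inclusions of operads, so the right square commutes strictly. For the left square, restricting a left Kan extension along $\iota$ to $\mfld_{/X}$ agrees with the left Kan extension along $j$ of the restricted algebra. This uses the same finality $\disk_{/Z}\simeq_{\mathrm{final}}\Disk_{/Z}$ for each $Z$, a Beck--Chevalley type argument.

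\textbf{Main obstacle.} I expect the hardest step to be the existence of the operadic left Kan extension with only geometric realizations, rather than all sifted colimits, together with the check that it is symmetric monoidal. The plan is to argue exactly as in \cite[Thm. $2.43$]{AFT17}. The comparison map $\cat{H}(Z)\otimes\cat{H}(Z')\to\cat{H}(Z\sqcup Z')$ reduces, by induction on a collar-gluing presentation of the finitary spaces $Z$ and $Z'$, to the compatibility of $\otimes$ with geometric realizations in each variable separately.
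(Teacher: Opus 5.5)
Your proposal is correct and follows essentially the same route as the paper. The paper's proof simply defers to \cite[Lem. $2.16$ and $2.17$]{AFT17} for how operadic left Kan extensions along $\uDisk_{/X}\mono\uMfld_{/X}$ and $\udisk_{/X}\mono\umfld_{/X}$ behave, together with the localization $\disk_{/Z}\to\Disk_{/Z}$ and the reduction of $\Disk_{/Z}$-colimits to geometric realizations recalled just before the statement; you are unpacking exactly these ingredients.

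Two minor remarks. First, your assertion that $\umfld_{/X}\to\uMfld_{/X}$ is also a localization is unjustified and not needed: the paper does not claim the right vertical arrow is fully faithful, and full faithfulness of the middle arrow already follows from that of the left one through the commuting square with horizontal equivalences. Second, the bottom row is not literally verbatim. Isotopy equivalences $Z\subset Z'$ in the poset $\mfld_{/X}$ are not invertible, so you should also check that the left Kan extension along $j$ inverts them; this follows from the finality of $\disk_{/Z}\to\Disk_{/Z}$ together with $\Disk_{/Z}\simeq\Disk_{/Z'}$.
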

\begin{proof}
    This is a direct consequence of \cite[Lem. $2.16$ and $2.17$]{AFT17} in which the details of how left Kan extensions behave in the context of marked operads can be found.
\end{proof}

We can then define a convenient way to transfer between homology theories.
This definition is a small abstraction from the behavior of constructible bundles in \cite[Subsec. $2.5$]{AFT17}.

\begin{definition}[Homological Morphisms]\label{def:homological_morphism}
    Let $X,Y$ be stratified spaces together with a marked operad morphism $\alpha\in \umfld_{/Y} \to \umfld_{/X}$.
    We say that this morphism is \emph{homological} if
    \begin{enumerate}[label=(\roman*)]
        \item the functor $\disk_{/Y} \xto[\Disk_{/\alpha(-)}] \Cat{1}$ sends isotopy equivalences to equivalences and,
        \item the following equivalent conditions hold:
        \begin{enumerate}
            \item for all $V \inset \mfld_{/Y}$, the canonical functor below is an equivalence
                \[
                    \Colim \left( \disk_{/V} \xto[\Disk_{/\alpha(-)}] \Cat{1} \right) \longeq \Disk_{/\alpha(V)}
                \]
            \item for all $V \inset \disk_{/Y}$, the canonical projection $X_{V,\alpha} \xto[(\star)] \disk_{/\alpha(V)}$ from the \textit{comma} $(1,1)$-category below is $(\infty,1)$-final
                \[\begin{tikzcd}
                    {\disk_{/V}} & {\mfld_{/\alpha(V)}} \\
                    {X_{V,\alpha}} & {\disk_{/\alpha(V)}}
                    \arrow["\alpha", from=1-1, to=1-2]
                    \arrow[dashed, from=2-1, to=1-1]
                    \arrow["{(\star)}"', dashed, from=2-1, to=2-2]
                    \arrow[between={0.3}{0.7}, Rightarrow, from=2-2, to=1-1]
                    \arrow[hook', from=2-2, to=1-2]
                \end{tikzcd}\]
        \end{enumerate}
    \end{enumerate}
\end{definition}
In the notations of \cite[Subsec. $2.5$]{AFT17}, the comma category $X_{V,\alpha}$ would be described as the limit of the diagram below.
\[\begin{tikzcd}
	&& {X_{V,\alpha}} \\
	\\
	{\disk_{/\alpha(V)}} && {[c_1, \mfld_{/\alpha(V)}]} && {\disk_{/V}} \\
	& {\mfld_{/\alpha(V)}} && {\mfld_{/\alpha(V)}}
	\arrow["{(\star)}"', dashed, from=1-3, to=3-1]
	\arrow[dashed, from=1-3, to=3-3]
	\arrow[dashed, from=1-3, to=3-5]
	\arrow[hook', from=3-1, to=4-2]
	\arrow["\dom", from=3-3, to=4-2]
	\arrow["\cod"', from=3-3, to=4-4]
	\arrow[from=3-5, to=4-4]
\end{tikzcd}\]
\begin{proof}[of the Equivalence of Conditions]
    The proof follows from Joyal's $(\infty,1)$-categorical version of Quillen's Theorem A \cite[\href{https://kerodon.net/tag/02NY}{Thm. 02NY}]{kerodon}.
    Details can be found in the proof of \cite[Lem. $2.27$]{AFT17}, whose statement is essentially identical.
\end{proof}

The proof of \cite[Thm. $2.25$]{AFT17} can be adapted in our context to show the following, by using the ``universal'' algebra $A = \Disk_{/-}$ \cite[Cor. $2.38$]{AFT17}.

\begin{lemma}[Composition stays Homological]\label{lem:homological_composition}
    Homological morphisms are closed under composition.
\end{lemma}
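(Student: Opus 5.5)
Let $\alpha \in \umfld_{/Y} \to \umfld_{/X}$ and $\beta \in \umfld_{/Z} \to \umfld_{/Y}$ be homological morphisms. We show $\alpha \circ \beta$ is homological, verifying conditions (i) and (ii)(a) of \autoref{def:homological_morphism} directly. The guiding idea, following the hint about the universal algebra $\Disk_{/-}$ \cite[Cor. $2.38$]{AFT17}, is that (ii)(a) says the functor $V \mapsto \Disk_{/\alpha(V)}$ behaves as a homology theory on $Y$ with values in $\Cat{1}$. We then show that composing with $\beta$ preserves this property, by a two-step left Kan extension argument.

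For condition (ii)(a), fix $W \inset \mfld_{/Z}$. By (ii)(a) for $\alpha$ applied to $V = \beta(W)$, we have $\Disk_{/\alpha\beta(W)} \simeq \Colim_{U \inset \disk_{/\beta(W)}} \Disk_{/\alpha(U)}$. By (i) for $\alpha$, the functor $F = \Disk_{/\alpha(-)} \in \disk_{/Y} \to \Cat{1}$ inverts isotopy equivalences. By the localization $\disk_{/Y} \to \Disk_{/Y}$ \cite[Prop. $2.22$]{AFT17}, $F$ therefore factors through $\Disk_{/Y}$, and its left Kan extension to $\Mfld_{/Y}$ is, by (ii)(a), again $V \mapsto \Disk_{/\alpha(V)}$. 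In other words, $F$ extends to a $\Cat{1}$-valued homology theory on $Y$ in the sense of condition 1 of \autoref{def:homology_theory}. We then compute
\[
    \Colim_{W' \inset \disk_{/W}} \Disk_{/\alpha\beta(W')} \simeq \Colim_{W' \inset \disk_{/W}} \Colim_{U \inset \disk_{/\beta(W')}} F(U).
\]
Using (ii)(b) for $\beta$, the comma categories $X_{W',\beta} \to \disk_{/\beta(W')}$ are final. This lets us rewrite the double colimit as a single colimit of $F$ over $\disk_{/\beta(W)}$, which is essentially the pasting of the comma squares used in the proof of \cite[Thm. $2.25$]{AFT17}. Applying (ii)(a) for $\alpha$ once more identifies this colimit with $\Disk_{/\alpha\beta(W)}$.

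For condition (i), let $W' \mono W$ be an isotopy equivalence in $\disk_{/Z}$. We must show $\Disk_{/\alpha\beta(W')} \to \Disk_{/\alpha\beta(W)}$ is an equivalence. Condition (i) for $\beta$ shows $\Disk_{/\beta(W')} \to \Disk_{/\beta(W)}$ is an equivalence. Since $F$ factors through $\Disk_{/Y}$, its colimit over $\disk_{/\beta(-)}$ depends only on $\Disk_{/\beta(-)}$: finality of $\disk \to \Disk$ together with the (ii)(a) formula give $\Disk_{/\alpha\beta(W)} \simeq \Colim_{\Disk_{/\beta(W)}} \tilde F$, where $\tilde F$ is the factorization of $F$. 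Hence the map is an equivalence.

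The main obstacle is the Fubini-type step in (ii)(a), namely identifying the iterated colimit with a single colimit over $\disk_{/\beta(W)}$. I would handle it via Quillen's Theorem A \cite[\href{https://kerodon.net/tag/02NY}{Thm. 02NY}]{kerodon}: form the Grothendieck construction of $W' \mapsto \disk_{/\beta(W')}$ over $\disk_{/W}$, then show that its projection to $\disk_{/\beta(W)}$ is final. Finality reduces fiberwise to the weak contractibility of the comma categories, which is exactly what condition (ii)(b) for $\beta$ supplies.
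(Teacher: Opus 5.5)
Your proposal is correct and follows essentially the same route as the paper: (ii)(a) comes from the same square of colimits, with the horizontal maps given by (ii)(a) for $\alpha$ and the Fubini step by $\beta$ being homological, and (i) comes from (i) for $\beta$ combined with the factorization of $\Disk_{/\alpha(-)}$ through the localization $\disk \to \Disk$ and finality. One small correction: the Grothendieck construction of $W' \mapsto \disk_{/\beta(W')}$ over $\disk_{/W}$ is literally the comma category $X_{W,\beta}$ for the outer $W$ (not $X_{W',\beta}$ as your second paragraph says), so (ii)(b) for $\beta$ at $W$ gives the required finality directly, without a separate appeal to Quillen's Theorem A.
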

\begin{proof}
    Let $X,Y,Z$ be stratified spaces and $\umfld_{/Z} \xto[\beta] \umfld_{/Y} \xto[\alpha] \umfld_{/X}$ be composable homological morphisms.  
    Let $W \inset \mfld_{/Z}$.
    Consider the diagram below.
\[\begin{tikzcd}
	{\disk_{/W}} & {\mfld_{/\beta(W)}} \\
	{X_{W,\beta}} & {\disk_{/\beta(W)}} & {\Cat{1}}
	\arrow["\beta", from=1-1, to=1-2]
	\arrow["{\Disk_{/-}}", from=1-2, to=2-3]
	\arrow[dashed, from=2-1, to=1-1]
	\arrow["{(\star)}"', dashed, from=2-1, to=2-2]
	\arrow[between={0.3}{0.7}, Rightarrow, from=2-2, to=1-1]
	\arrow[hook', from=2-2, to=1-2]
	\arrow["{\Disk_{/-}}"', from=2-2, to=2-3]
\end{tikzcd}\]
    As in the proof of \cite[Thm. $2.25$]{AFT17}, we note that the right square is a comma and the left triangle is a left Kan extension, implying that the outer triangle is also a left Kan extension.
    In particular, we have
    \[
        \Colim \left( \disk_{/W} \xto[\Disk_{/\beta(-)}] \Cat{1}\right) \revlongeq \Colim \left(X_{W,\beta} \to \disk_{/\beta(W)} \xto[\Disk_{/-}] \Cat{1}\right) \longeq \Colim\left(\disk_{/\beta(W)} \xto[\Disk_{/-}] \Cat{1}\right)
    \] 
    This gives the commutative diagram below, written with subscript notations to lighten the expressions.
    \[\begin{tikzcd}
        {\Colim_{W' \subseteq W}\left(\Colim_{V \subseteq \beta(W')} \Disk_{/\alpha(V)}\right)} & {\Colim_{W' \subseteq W} \Disk_{/\alpha\circ \beta(W')}} \\
        {\Colim_{V \subset \beta(W)} \Disk_{/\alpha(V)}} & {\Disk_{/\alpha \circ \beta(W)}}
        \arrow["\simeq", "{\alpha \textnormal{ hom.}}"', from=1-1, to=1-2]
        \arrow["\simeq"', "{\beta \textnormal{ hom.}}", from=1-1, to=2-1]
        \arrow[from=1-2, to=2-2]
        \arrow["\simeq", "{\alpha \textnormal{ hom.}}"', from=2-1, to=2-2]
    \end{tikzcd}\]
    where the horizontal functors are equivalences by property $(ii.a)$ for $\alpha$.
    Thus, the right vertical functor is an equivalence as well.
    We are left to prove that $\disk_{/Z} \xto[\Disk_{/\alpha\circ\beta(-)}] \Cat{1}$ sends isotopy equivalences to equivalences.
    If $W \mono W' \in \disk_{/Z}$ is an isotopy equivalence, then by property $(i)$ for $\beta$ the induced functor $\Disk_{/\beta(W)} \to \Disk_{/\beta(W')}$ is an equivalence.
    By property $(i)$ for $\alpha$, the colimits computed in the bottom left of the square above for $W$ (resp. $W'$) fit in the following diagram
    \[\begin{tikzcd}
        {\disk_{/\beta(W)}} & {\Disk_{/\beta(W)}} \\
        {\disk_{/\beta(W')}} & {\Disk_{/\beta(W)}} & {\Cat{1}}
        \arrow["{(\star)}", from=1-1, to=1-2]
        \arrow[from=1-1, to=2-1]
        \arrow["\sim"{marking, allow upside down}, from=1-2, to=2-2]
        \arrow["{\Disk_{/\alpha(-)}}", from=1-2, to=2-3]
        \arrow["{(\star)}"', from=2-1, to=2-2]
        \arrow["{\Disk_{/\alpha(-)}}"', from=2-2, to=2-3]
    \end{tikzcd}\]
    where the functors marked by $(\star)$ are final, and the one marked by $\sim$ is an equivalence.
    Thus, the induced morphism of colimits is an equivalence as well, and $\alpha \circ \beta$ is indeed homological.
\end{proof}

The following result justifies our naming convention for ``homological'' morphisms.

\begin{lemma}[Lifting Homology Theories]\label{lem:lifting_homology}
    Let $X, Y$ be stratified spaces and $\alpha\in \umfld_{/Y} \to \umfld_{/X}$ be homological.
    Then, we have a unique lift
\[\begin{tikzcd}
	{\cat{H}(\uMfld_{/X}, \V)} & {\cat{H}(\uMfld_{/Y}, \V)} \\
	{\Alg{\umfld_{/X}}(\V)} & {\Alg{\umfld_{/Y}}(\V)}
	\arrow[dotted, from=1-1, to=1-2]
	\arrow[hook, from=1-1, to=2-1]
	\arrow[hook, from=1-2, to=2-2]
	\arrow[from=2-1, to=2-2]
\end{tikzcd}\]
\end{lemma}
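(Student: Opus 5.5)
The plan is to identify both vertical inclusions with pre-homology theories, via \autoref{cor:algebra_homology}, and then show that precomposition along $\alpha$ preserves the colimit condition of \autoref{def:homology_theory}. Uniqueness is then automatic, since the right vertical arrow is a fully faithful inclusion of a full subcategory; only existence needs proof. Concretely, let $\cat{H} \inset \cat{H}(\uMfld_{/X}, \V)$ and denote by $A$ its restriction to $\umfld_{/X}$. By \autoref{cor:algebra_homology} we know that $A$ lies in $H(\umfld_{/X},\V)$ and is left Kan extended from $\disk_{/X}$; moreover it factors through the localization $\disk_{/X} \to \Disk_{/X}$. The goal is to show that $\alpha^*A = A \circ \alpha$ lies in $H(\umfld_{/Y},\V)$ and that its restriction to $\disk_{/Y}$ inverts isotopy equivalences. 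Granting this, the equivalences of \autoref{cor:algebra_homology} for $Y$ produce the desired object of $\cat{H}(\uMfld_{/Y},\V)$.

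The key step is the colimit condition. Fix $V \inset \mfld_{/Y}$. We need the canonical map
\[
\Colim_{U \inset \disk_{/V}} A(\alpha(U)) \longrightarrow A(\alpha(V))
\]
to be an equivalence. I would rewrite each term $A(\alpha(U))$ as $\Colim_{\Disk_{/\alpha(U)}} A$, which uses the homology condition (i) for $\cat{H}$. The left-hand side then becomes an iterated colimit. This iterated colimit is the colimit of $A$ over the lax colimit (Grothendieck construction) of $U \mapsto \Disk_{/\alpha(U)}$ over $\disk_{/V}$, followed by the canonical functor to $\Disk_{/\alpha(V)}$.

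Condition (ii.a) of \autoref{def:homological_morphism} says that the honest colimit of $U \mapsto \Disk_{/\alpha(U)}$ in $\Cat{1}$ is $\Disk_{/\alpha(V)}$. Now, the colimit in $\Cat{1}$ is the localization of the Grothendieck construction at the coCartesian edges. So I must check that $A$ inverts those coCartesian edges, which are images of the transition functors; this holds because they are identities on the underlying disk of $X$. Since localizations are final, the iterated colimit therefore agrees with $\Colim_{\Disk_{/\alpha(V)}} A \simeq A(\alpha(V))$. This is the same maneuver as in the proof of \autoref{lem:homological_composition}, run with $A$ in place of the universal algebra $\Disk_{/-}$. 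In fact, the cleanest route is to deduce it from there: $A$ is obtained by postcomposing $\Disk_{/-}$ with the colimit-preserving functor $\Colim(-\to\V)$.

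For isotopy invariance, take an isotopy equivalence $U \mono U'$ in $\disk_{/Y}$. By condition (i) it is sent to an equivalence $\Disk_{/\alpha(U)} \simeq \Disk_{/\alpha(U')}$, and applying $\Colim A$ gives $A(\alpha(U)) \simeq A(\alpha(U'))$. Finally, compatibility with pre-coCartesian edges holds because $\alpha$ is a marked operad morphism, and the symmetric monoidal structure is inherited from $A$.

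I expect the main obstacle to be the identification in the key step: checking carefully that the colimit over the Grothendieck construction, with $A$ pulled back, agrees with the colimit over the $\Cat{1}$-colimit. In particular this requires verifying the functoriality of $U\mapsto (\Disk_{/\alpha(U)}\to\V)$ as a natural cocone. I would handle it via the final-functor criterion (ii.b), mirroring \cite[Thm. $2.25$]{AFT17}.
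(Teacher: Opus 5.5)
Your proposal is correct and follows the paper's proof: uniqueness comes from full faithfulness of the vertical inclusions, and existence from verifying the pre-homology colimit condition at each $V \inset \mfld_{/Y}$ together with isotopy invariance from condition (i); for the colimit condition you rewrite the colimit of $\cat{H}(\alpha(-))$ over $\disk_{/V}$ as a colimit over a Grothendieck construction whose projection to the disks of $\alpha(V)$ is final. The differences are cosmetic: the paper uses the comma category $X_{V,\alpha}$ (the discrete analogue of your Grothendieck construction) with condition (ii.b) directly, whereas you use (ii.a) plus the description of colimits in $\Cat{1}$ as localizations of the unstraightening; you should also drop the suggested shortcut of postcomposing $\Disk_{/-}$ with a colimit-preserving $\Colim(-\to\V)$, since $\V$ only has geometric realizations, so that functor is not available.
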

\begin{proof}
    If such a lift exists, it is necessarily unique as the vertical functors are fully-faithful and hence monomorphisms by \autoref{cor:algebra_homology}.
    To lift a $\cat{H} \inset \cat{H}(\uMfld_{/X}, \V)$, it suffices to check:
    \begin{enumerate}[label=(\alph*)]
        \item for $V \inset \mfld_{/Y}$, we have an equivalence $\Colim \left( \disk_{/V} \xto[H(\alpha(-))] \V \right) \longeq \cat{H}(\alpha(V))$,
        \item for $V \mono V' \in \disk_{/Y}$ an isotopy equivalence, the induced morphism $\cat{H}(\alpha(V)) \simeq \cat{H}(\alpha(V'))$ is an equivalence. 
    \end{enumerate}
    Similarly to the proof of \autoref{lem:homological_composition}, we have a pasting of Kan extensions
\[\begin{tikzcd}
	{\disk_{/V}} & {\mfld_{/\alpha(V)}} \\
	{X_{V,\alpha}} & {\disk_{/\alpha(V)}} & \V
	\arrow["\alpha", from=1-1, to=1-2]
	\arrow["{\cat{H}}", from=1-2, to=2-3]
	\arrow[dashed, from=2-1, to=1-1]
	\arrow["{(\star)}"', dashed, from=2-1, to=2-2]
	\arrow[between={0.3}{0.7}, Rightarrow, from=2-2, to=1-1]
	\arrow[hook', from=2-2, to=1-2]
	\arrow["{\cat{H}}"', from=2-2, to=2-3]
\end{tikzcd}\]
    which induces 
    \[
        \Colim \left( \disk_{/V} \xto[H(\alpha(-))] \V\right) \revlongeq \Colim \left(X_{V,\alpha} \to \disk_{/\alpha(V)} \xto[\cat{H}(-)] \V\right) \longeq \Colim\left(\disk_{/\alpha(V)} \xto[\cat{H}(-)] \V\right) \longeq \cat{H}(\alpha(V))
    \]
    proving $(a)$.
    Now $(b)$ follows directly from the formula above and from condition $(i)$ of \autoref{def:homological_morphism} which implies $\disk_{/\alpha(V)} \simeq \disk_{/\alpha(V')}$.
\end{proof}

We now describe four different classes of maps between stratified spaces which induce homological morphisms, and hence allow us to transfer algebraic structures between them.
We will denote these \textbf{homological maps}.
\begin{enumerate}
    \item \textbf{Open Embeddings:} As explained at the end of \autoref{def:operads}, open embeddings induce morphisms of marked operads and in particular, we get an action $\Aut(X) \curvearrowright \Disk_{/X}$.
    On $\pi_0$, this can be seen as acting through homological marked morphisms.

    \item \textbf{Refinements:} As explained in \cite[Lem. $2.23$]{AFT17}, refinements $\tilde{X} \dasharrow X$ of stratified spaces induce a localization $\uDisk_{/\tilde{X}} \dasharrow \uDisk_{/X}$ of marked operads.
    The same argument proves that for any $\tilde{V} \inset \mfld_{/\tilde{X}}$, the induced functor $\disk_{/\tilde{V}} \to \Disk_{/V}$ is a localization and hence final.
    In particular, the diagram in $(ii.a)$ in \autoref{def:homological_morphism} factors through this functor, and hence we get
    \[
        \Colim \left( \disk_{/\tilde{V}} \to \Disk_{/V} \xto[\Disk_{/-}] \Cat{1} \right) \longeq \Colim \left(\Disk_{/V} \xto[\Disk_{/-}] \Cat{1} \right) \longeq \Disk_{/V}
    \]
    by \cite[Cor. $2.38$]{AFT17}.
    Thus, refinements induce homological morphisms of marked operads.
    
    \item \textbf{Weakly Constructible Bundles:} Recall that a stratified map $\pi \in X \to Y$ is a \textit{weakly constructible bundle} if for each stratum $Y_q \subset Y$, the corestriction $\pi_{| \pi^{-1}(Y_q)}: \pi^{-1}(Y_q) \to Y_q$ is a fiber bundle (see \cite[Def. $3.6.1$]{AFT17a}).
    As explained in \cite[Lem. $2.24$ and Thm. $2.25$]{AFT17}, weakly constructible bundles of stratified spaces induce homological morphisms of marked operads by taking preimages.
    
    \item \textbf{Closed Strata Inclusions:} For $Y \xmono[\textnormal{cl}] X$ the inclusion of a closed union of strata, we have a canonical morphism of operads $\Disk_{/X} \to \Disk_{/Y}$ which ``forgets'' the algebraic data corresponding to the other strata.
    We can define this functor as follows.
    From \cite[Prop. $8.2.5$]{AFT17a}, we get a natural proper constructible bundle $\Link_Y(X) \xto[\pi] X$ inducing a cone construction
    \begin{equation}\label{eq:closed_inclusion}\begin{tikzcd}
        Y & {C(\pi)} & {C(\pi)_{\textnormal{unref.}}} & X
        \arrow[two heads, from=1-2, to=1-1]
        \arrow[dashed, from=1-2, to=1-3]
        \arrow[hook, from=1-3, to=1-4]
    \end{tikzcd}\end{equation}
    where the left map is a constructible bundle, the center one is a refinement and the right one is an open embedding.
    The construction of the right map can be seen as a stratified notion of a tubular neighborhood of $Y$ in $X$.
    Thus, one gets a homological marked morphism $\umfld_{/Y} \to \umfld_{/C(\pi)} \to \umfld_{/X}$.
    Note that the preimage of a basic in $Y$ is basic in $C(\pi)$, so the morphisms above send basics to basics.
    Often in this text, we will consider closed inclusions $Y \xmono[\textnormal{cl}] X$ which are only closed strata inclusions after refining the target (for example the inclusion $\mathbb{R} \xmono[\textnormal{cl}] \mathbb{R}^2$ on the first coordinate).
    The construction is identical to the one described here.
\end{enumerate}

\begin{remark}[Contractible choice of tubular neighborhood]\label{rem:tubular_neighborhood}
    Note that the choice of tubular neighborhood above is contractible.
    More precisely, the neighborhoods are given by considering the functorial ``unzipping'' construction (see \cite[Sec. $7.2$]{AFT17a}) and picking an inward vector field which is non-vanishing on the link (the boundary of the unzipping procedure).
    The space of such vector fields is contractible.
    The construction can be seen as this space of vector field mapping to $\Emb(C(\pi)_{\textnormal{unref.}}, Y)$ which itself maps to the space $\Alg{C(\pi)_{\textnormal{unref.}}}(\uDisk_{/X})_{\leq 0}$.
    Thus, any two choices of tubular neighborhoods will give rise to naturally equivalent functors $\cat{H}(\uMfld_{/X}, \V) \to \cat{H}(\uMfld_{/Y}, \V)$.
\end{remark}

The following result is the key technical tool of this paper on the factorization homology side.

\begin{lemma}[Commutative Diagrams]\label{lem:commutative_diagrams}
    Let $\V \inset \EAlg{\infty}(\Cat{1}^\geom)$.
    Any commutative diagram of homological maps between stratified spaces, which induces a commutative diagram of homological morphisms between the corresponding $\umfld_{/-}$, will also induce a commutative diagram of functors between the corresponding $\Alg{-}(\V) \simeq \cat{H}(\uMfld_{/-}, \V)$.
\end{lemma}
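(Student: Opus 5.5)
The plan is to reduce the statement to the functoriality of restriction along marked operad morphisms, combined with the uniqueness in \autoref{lem:lifting_homology}.

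First, I will make the functor attached to each homological map precise. A homological morphism $\alpha\in \umfld_{/Y} \to \umfld_{/X}$ induces, by precomposition, a functor $\alpha^*\in\Alg{\umfld_{/X}}(\V) \to \Alg{\umfld_{/Y}}(\V)$. This assignment is strictly functorial in $\alpha$: precomposition satisfies $(\alpha\circ\beta)^* = \beta^*\circ\alpha^*$, and a commutative diagram of marked operad morphisms is sent to a commutative diagram of functors. The same holds for natural equivalences between such morphisms, since precomposition is a functor of $(\infty,1)$-categories $[\umfld_{/Y},\umfld_{/X}]_{/\Fin^*} \to \Fun(\Alg{\umfld_{/X}}(\V),\Alg{\umfld_{/Y}}(\V))$.

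Second, I will pass to homology theories. By \autoref{lem:lifting_homology}, each $\alpha^*$ restricts to a functor $\cat{H}(\uMfld_{/X},\V) \to \cat{H}(\uMfld_{/Y},\V)$. Here I use the full inclusion $\cat{H}(\uMfld_{/-},\V) \mono \Alg{\umfld_{/-}}(\V)$, which comes from the right column of \autoref{cor:algebra_homology} composed with restriction along $\umfld_{/-} \to \uMfld_{/-}$; this composite is fully faithful, since a homology theory is left Kan extended from disks. Because this inclusion is a monomorphism of $(\infty,1)$-categories, the full subcategory of functors landing in it is itself full. Hence any commutative diagram among the $\alpha^*$, including all higher coherences, restricts uniquely to a commutative diagram among the lifts. \autoref{lem:homological_composition} guarantees that every composite appearing in the diagram is homological, so each composite also admits a lift. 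By uniqueness, the lift of a composite agrees with the composite of the lifts. Composing with the equivalences $\Alg{-}(\V) \simeq \cat{H}(\uMfld_{/-},\V)$ from \autoref{cor:algebra_homology} then gives the statement.

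Third, I will check that each of the four classes of homological maps actually produces a marked operad morphism $\umfld_{/Y} \to \umfld_{/X}$, in the direction matching the stated hypothesis.
\begin{enumerate}
    \item Open embeddings act by postcomposition.
    \item Refinements act by the localization of \cite[Lem. $2.23$]{AFT17}.
    \item Weakly constructible bundles act by taking preimages.
    \item Closed strata inclusions act via \eqref{eq:closed_inclusion}.
\end{enumerate}
The statement already assumes that the diagram of operad morphisms commutes, so I only need to address the closed-inclusion case. There, the induced morphism depends on a choice of tubular neighborhood, so the diagram may commute only up to a choice. I will invoke \autoref{rem:tubular_neighborhood}: the space of such choices is contractible, so the induced functors are canonically equivalent, and commutativity holds up to coherent equivalence.

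The main obstacle is the coherence in the last step: making sure that strictly commuting diagrams at the level of $(1,1)$-operads $\umfld_{/-}$ give homotopy-coherent commutativity at the level of the functors. I expect to handle this by working at the $(1,1)$-level, where the diagram is strict. I will then push it through the $(\infty,1)$-functor of precomposition, which yields coherent data automatically. Finally, I will use contractibility of the choices of tubular neighborhood to make the closed-inclusion morphisms well defined up to contractible ambiguity.
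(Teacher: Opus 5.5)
Your proposal is correct and matches the paper's proof: closure of homological morphisms under composition (\autoref{lem:homological_composition}) plus uniqueness of lifts through the fully faithful inclusion of homology theories (\autoref{lem:lifting_homology}). Your argument about restricting coherent diagrams of precomposition functors to full subcategories just spells out what the paper compresses into ``uniqueness of the lift gives commutativity.'' Your third step on tubular neighborhoods is not needed for the lemma itself, since commutativity at the level of $\umfld_{/-}$ is a hypothesis; the paper deals with that issue separately, when checking the hypothesis in applications (\autoref{rem:tubular_neighborhood}, \autoref{rem:vector_fields}).
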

\begin{proof}
    As homologicality is closed under composition (\autoref{lem:homological_composition}), the total composition is homological as well.
    \autoref{lem:lifting_homology} then implies that there is a unique lift at the level of homology theories, and hence categories of algebras.
    The uniqueness of the lift precisely gives the commutativity of the diagram.
\end{proof}

\begin{remark}[An example of commutativity]\label{rem:vector_fields}
    For most diagrams of homological maps, the commutativity at the level of homological morphisms is clear from the construction.
    The only subtle case we will encounter here is for diagrams of the form
\[\begin{tikzcd}
	\bullet & \bullet \\
	\bullet & \bullet
	\arrow[two heads, from=1-1, to=1-2]
	\arrow["{\textnormal{cl}}", hook, from=1-1, to=2-1]
	\arrow["{\textnormal{cl}}", hook, from=1-2, to=2-2]
	\arrow[two heads, from=2-1, to=2-2]
\end{tikzcd}\]
    Using \autoref{rem:tubular_neighborhood}, we can pick a vector field for the right closed inclusion and pull it back to the left one.
    This induces a commutative diagram at the level of \autoref{eq:closed_inclusion}, which in turn guarantees that the conditions of \autoref{lem:commutative_diagrams} are satisfied.
\end{remark}

\subsection{Handlebodies as Duality Datum}\label{subsec:handlebodies}

The goal of this subsection is to construct the stratified spaces which allow us to describe the ``duality datum'' of a $k$-morphism.
We fix $n\geq 1$ throughout this section.
We consider various stratifications of $\mathbb{R}^n$ which will ``carry'' the algebraic structure of the duality data and homological maps (see \autoref{def:homological_morphism}) between them which will transfer these structures.
We denote the coordinates of a point $x \inset \mathbb{R}^n$ by $(x_1, \ldots, x_n)$.
The $i^\textnormal{th}$-coordinate should be viewed as the direction of composition of $i$-morphisms in an $(\infty, n)$-category.

The first stratification is to be thought of as the stratification which "carries" the structure of an arbitrary $k$-morphism:

\begin{definition}[Bar-Stratification]
    For $0 \leq k \leq n$, we define $\mathbb{R}^{n}_{\overline{k}}$ to be the stratification of $\mathbb{R}^{n}$ given by the following strata:
    \begin{enumerate}
        \item for $1 \leq i \leq k$, the strata given by $x_{1} = x_{2} = \ldots = x_{i-1} = 0$, and $\pm x_i > 0$,
        \item the stratum given by $x_{1} = x_{2} = \ldots = x_{k-1} = x_k = 0$.
    \end{enumerate}
    This is a total of $2k + 1$ strata (by convention, when $k=0$, there is a single stratum: $\mathbb{R}^n$ itself).
\end{definition}

\begin{remark}[Algebras over Bar-Stratifications]
    If $k < n$, the data of an algebra $M \inset \Alg{\mathbb{R}^{n}_{\overline{k}}}(\V)$ is precisely the data of a $k$-morphism in the $(\infty,n+1)$-category $\Mor_n(\V)$.
    If $k=n$, any such algebra gives an $n$-morphism in $\Mor_n(\V)$, together with a choice of a point (an $\mathcal{E}_0$-structure).
\end{remark}

The (co)domain and composition constructions can be seen as the following operations.
For the rest of the exposition, we fix:
\begin{enumerate}
    \item A smooth function $\rho \in \mathbb{R} \to [0, \infty)$ which is strictly decreasing on $(-\infty, 0]$, such that $\forall n \geq 0, \rho^{(n)}(0) = 0$ and $\forall x \geq 0, \rho(x) = 0$.
        For example, one may use $\rho(x) = e^{1/x}$ for all $x < 0$ and $\rho(x) = 0$ for $x \geq 0$.

    \item An increasing diffeomorphism $\phi \in \mathbb{R} \to \mathbb{R}_{>0}$.
\end{enumerate} 
The choice of these functions is contractible and does not affect the results; we omit the precise statement as it is not necessary for our purposes.

\begin{definition}[(Co)Domain]\label{def:(co)domain}
    For $1 \leq k \leq n$, we have an open embedding $\cod \in \mathbb{R}^{n}_{\overline{k-1}} \mono \mathbb{R}^{n}_{\overline{k}}$ which sends
    \[
        x_k \mapsto \phi(x_k)
    \]
    This induces an equivalence $\mathbb{R}^{n}_{\overline{k-1}} \simeq \left(\mathbb{R}^{n}_{\overline{k}}\right)_{x_k > 0}$.
    Similarly, one has $\dom \in \mathbb{R}^{n}_{\overline{k-1}} \mono \mathbb{R}^{n}_{\overline{k}}$ given by $x_k \mapsto -\phi(-x_k)$
\end{definition}

\begin{definition}[Composition]\label{def:composition}
    For $1 \leq k \leq n$, we define $\mathbb{R}^{n}_{\overline{k} \circ \overline{k}}$ to be the stratification of $\mathbb{R}^{n}$ given by the following strata:
    \begin{enumerate}
        \item for $1 \leq i < k$, the strata given by $x_{1} = x_{2} = \ldots = x_{i-1} = 0$, and $\pm x_i > 0$,
        \item the strata given by $x_{1} = x_{2} = \ldots = x_{k-1} = 0$ and $x_k < -1$, $x_k = -1$, $-1 < x_k < 1$, $x_k = 1$, $1 < x_k$.
    \end{enumerate}
    This is a total of $2k + 3$ strata.
    We have a constructible bundle $\mathbb{R}^{n}_{\overline{k} \circ \overline{k}} \xepi[\circ] \mathbb{R}^{n}_{\overline{k}}$ which sends
    \[
        x_k \mapsto x_k \left( \rho(1 - x_k^2) + \sum_{i=1}^{k-1} x_i^2 \right)
    \]
    intuitively pinching the two $(n-k)$-dimensional strata.
\end{definition}

\begin{warning}[Order of Composition]\label{warning:order}
    Note that the composition in $\Mor_1(\V)$ of $A \xto[M] B \xto[N] C$ is given by $N \circ M = M \otimes_B N$.
    The conventions are reversed: categorical composition is the reverse ordering compared to tensoring and factorization homology.
\end{warning}

The second stratification of interest is to be thought of as the stratification which ``carries'' the structure of some specific duality datum of an arbitrary $k$-morphism.

\begin{definition}[Duality Data]\label{def:duality_data}
    Let $d < n$ and a word $\sigma \inset \{\epsilon, \eta\}^d$ of length $d$ in the letters ``$\epsilon$, $\eta$'' is called a $d$-duality datum.
    We denote $|\eta| = 1$ and $|\epsilon| = -1$.
    To lighten notations, we let $\{\sigma = \eta\} = \{|\sigma |= 1\} = \{i \mid \sigma_i = \eta\}$ and $\{\sigma = \epsilon\} = \{|\sigma |= -1\} = \{i \mid \sigma_i = \epsilon\}$.
\end{definition}

\begin{definition}[Handlebody-Stratification]
    Let $1 + d \leq k + d \leq n$ and $\sigma = \sigma_{k+d-1} \ldots \sigma_{k} \inset \{\epsilon, \eta\}^d$ be a $d$-duality datum.
    We define $\mathbb{R}^{n}_{\overline{k},\sigma}$ to be the stratification of $\mathbb{R}^{n}$ given by the following strata:
    \begin{enumerate}
        \item \textbf{Bar-strata:} for $1 \leq i \leq k - 1$, the strata given by $x_{1} = x_{2} = \ldots = x_{i-1} = 0$, and $\pm x_i > 0$,
        \item \textbf{Handlebody-strata:} $x_{1} = x_{2} = \ldots = x_{k-1} = 0$, and 
        \begin{enumerate}
            \item $x_{k+d} > \sum_{i=k}^{k+d-1} |\sigma_i| x_i^2$,
            \item $x_{k+d} = \sum_{i=k}^{k+d-1} |\sigma_i| x_i^2$,
            \item $x_{k+d} < \sum_{i=k}^{k+d-1} |\sigma_i| x_i^2$.
        \end{enumerate}
    \end{enumerate}
    This is a total of $2k + 1$ strata.
\end{definition}

Note that the stratification above is isomorphic to the $k$-bar-stratification by sending $x_{k+d}$ to $x_{k+d} - \sum_{i=k}^{k+d-1} |\sigma_i| x_i^2$, and by permuting the order of the coordinates.
In particular, algebras over $\mathbb{R}^{n}_{\overline{k},\sigma}$ are also (pointed) $k$-morphisms in $\Mor_n(\V)$, although maybe in a less canonical way (as it depends on the choice of permutation).

Finally, the third stratification is to be thought of as the stratification which ``carries'' the understanding that the $\sigma$-duality datum of a $k$-morphism forms a $(k+d)$-morphism.
We encourage the reader to skip this definition on a first reading, and to come back to it after \autoref{def:full_pinch}.
Indeed, the stratification might seem quite complicated, but it is simply the minimal one which makes the ``full-pinch'' map a stratified one.

\begin{definition}[Fully-Barred Handlebody-Stratification]\label{def:bar_handlebody_strata}
    Let $1 + d \leq k + d \leq n$ and $\sigma = \sigma_{k+d-1} \ldots \sigma_{k} \inset \{\epsilon, \eta\}^d$ be a $d$-duality datum.
    We define $\mathbb{R}^{n}_{\overline{k},\overline{\sigma}}$ to be the stratification of $\mathbb{R}^{n}$ given by the following strata:
    \begin{enumerate}
        \item \textbf{Bar-strata:} for $1 \leq i \leq k - 1$, the strata given by $x_{1} = x_{2} = \ldots = x_{i-1} = 0$, and $\pm x_i > 0$,
        \item \textbf{Handlebody-Bar-strata:}
        For $k \leq i < k+d$, write $\Delta_i = |\sigma_i| \left(x_{k+d} - \sum_{j=i}^{k+d-1} |\sigma_i| x_i^2\right)$ with the convention $\Delta_{k+d} = -1$.
        For $k \leq j \leq k+d$, we have the following strata
        \begin{enumerate}
            \item the $2$ strata given by $\pm x_{j} > 0$, $\Delta_j < 0$ and $x_u = 0$ for $u \inset \{1, \ldots, j-1\}$,
            \item for $k \leq i < j$, the $3$ strata given by $x_u = 0$ for $u \inset \{1, \ldots, k-1\} \sqcup \{|\sigma| = |\sigma_i|\}\cap\{k, \ldots, i-1\} \sqcup \{|\sigma| = -|\sigma_i|\}\cap\{k, \ldots, j-1\}$ and by either
            \begin{enumerate}
                \item $\Delta_i > 0$, or
                \item $\Delta_i = 0$ and $\pm x_i > 0$. 
            \end{enumerate}
        \end{enumerate}
        Finally, we have the stratum given by $x_1= \ldots = x_{k+d}=0$.
    \end{enumerate}
\end{definition}

Note that we have an obvious refinement map $\mathbb{R}^{n}_{\overline{k},\overline{\sigma}} \dashrightarrow \mathbb{R}^{n}_{\overline{k},\sigma}$.

We now define some "pinch maps" which will help formalize the idea that the $\sigma$-duality datum of an arbitrary $k$-morphism is a $(k+d)$-morphism.

\begin{definition}[Last Coordinate Pinch]
    As above, let $1 + d \leq k + d \leq n$ and $\sigma = \sigma_{k+d-1} \ldots \sigma_{k}\inset \{\epsilon, \eta\}^d$ be a $d$-duality datum.
    We define a stratified map $\pi_k \in \mathbb{R}^{n}_{\overline{k},\overline{\sigma}} \to \mathbb{R}^{n}_{\overline{k+1},\overline{\sigma_{k+d-1} \ldots \sigma_{k+1}}}$ called the ``pinch map'' by the following:
    \begin{enumerate}
        \item it leaves unchanged all coordinates except the $k^{\textnormal{th}}$-one,
        \item it sends the $k^{\textnormal{th}}$-coordinate $x_{k}$ to
        \[
            x_{k} \longmapsto x_{k} \left( \rho\left(|\sigma_k|\left(x_{k+d} - \sum_{i=k}^{k+d-1} |\sigma_i| x_i^2\right)\right) + \sum_{i=1}^{k-1} x_i^2\right)
        \]
    \end{enumerate}
\end{definition}

\begin{figure}[H]
\centering
\begin{tikzpicture}
    \def\scaleFactor{0.5}  
    \def\imageSeparation{10}  
    
    \node (img1) at (0, 0) {\includegraphics[scale=\scaleFactor]{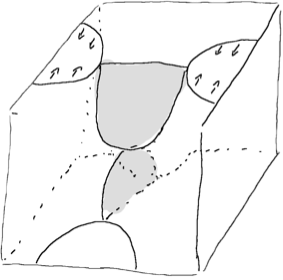}};
    
    \node (img2) at (\imageSeparation, 0) {\includegraphics[scale=\scaleFactor]{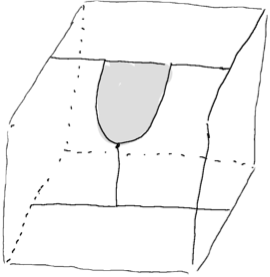}};
    
    \draw[->>, thick] (img1) -- (img2) node[midway, above] {$\pi_1$};
\end{tikzpicture}
\caption{
    Pinching $\mathbb{R}^{3}_{\overline{1},\overline{\epsilon,\eta}} \xepi[\pi_1] \mathbb{R}^{3}_{\overline{1},\overline{\eta}}$.
    The small arrows indicate the collapsing that occurs ``inside'' the handle.
}
\end{figure}

While we only need the easier notion of weakly constructible bundle for our purposes, one can easily see that these maps are constructible bundles.

\begin{lemma}
    As above, let $1 + d \leq k + d \leq n$ and $\sigma = \sigma_{k+d-1} \ldots \sigma_{k} \inset \{\epsilon, \eta\}^d$ be a $d$-duality datum.
    The pinch
    \[
        \mathbb{R}^{n}_{\overline{k},\overline{\sigma}} \xepi[\pi_k] \mathbb{R}^{n}_{\overline{k+1},\overline{\sigma_{k+d-1} \ldots \sigma_{k+1}}}
    \]
    is a weakly constructible bundle, \textit{i.e.} a stratified fiber bundle when corestricted to each stratum of the codomain.
\end{lemma}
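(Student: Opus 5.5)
The plan is to verify the definition directly: the map $\pi_k$ changes only the $k^{\textnormal{th}}$ coordinate, so the fibers can be computed explicitly in that single variable. Write $g(x) = \rho\bigl(|\sigma_k|\Delta'\bigr) + \sum_{i<k} x_i^2$, where $\Delta' = x_{k+d} - \sum_{i=k}^{k+d-1}|\sigma_i| x_i^2$. Then $\pi_k(x)_k = x_k\, g(x)$, and $g$ depends on $x_k$ only through the term $-|\sigma_k| x_k^2$ inside $\rho$.

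\textbf{Step 1: check that $\pi_k$ is stratified.} I will go through the strata of $\mathbb{R}^{n}_{\overline{k},\overline{\sigma}}$ one at a time and identify the stratum of $\mathbb{R}^{n}_{\overline{k+1},\overline{\sigma'}}$, with $\sigma' = \sigma_{k+d-1}\ldots\sigma_{k+1}$, into which each one lands. The quantities $\Delta_j$ for $j>k$ do not involve $x_k$, and $x_u$ for $u\neq k$ are unchanged. The sign of $x_k$ is preserved wherever $g>0$, while $\pi_k(x)_k = 0$ exactly where $g=0$. The locus $g=0$ is precisely $\{x_1=\dots=x_{k-1}=0,\ |\sigma_k|\Delta'\ge 0\}$, since $\rho$ vanishes exactly on $[0,\infty)$. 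This is the "inside of the handle", i.e. the stratum $\Delta_k\geq 0$ appearing in \autoref{def:bar_handlebody_strata}. The bookkeeping to do is to confirm that the index sets $\{|\sigma|=\pm|\sigma_i|\}$ in the definition shift correctly when $k$ is replaced by $k+1$. This is tedious but mechanical, and \autoref{def:bar_handlebody_strata} was designed to be exactly the minimal stratification making this work.

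\textbf{Step 2: trivialize over each target stratum.} Fix a stratum $T$ of the target and a point $y\in T$. The fiber $\pi_k^{-1}(y)$ is the set of $x_k$ with $x_k g(x_k, y_{\hat k}) = y_k$. There are two cases.

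\begin{enumerate}
    \item \textbf{$y_k\neq 0$, or some $y_i\neq 0$ with $i<k$.} Here $g>0$ and the function $h(x_k)=x_k g$ has
    \[
        h'(x_k) = g + 2|\sigma_k|x_k^2\rho'(\cdot)\cdot|\sigma_k|\cdot\ldots
    \]
    Because $\rho$ is decreasing, $h$ is strictly increasing, so the fiber is a single point that varies smoothly. Over strata where $\sum_{i<k}y_i^2>0$, $\pi_k$ is therefore a diffeomorphism.
    \item \textbf{$y_k=0$ and $x_1=\dots=x_{k-1}=0$.} The fiber is the set of $x_k$ with $\rho(|\sigma_k|\Delta')=0$, together with $x_k=0$. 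For $\sigma_k=\eta$ this set is $\{x_k^2 \le y_{k+d}-\sum_{i>k}|\sigma_i|y_i^2\}$, an interval, a point, or just $\{0\}$. For $\sigma_k=\epsilon$ it is the complement of an open interval, a union of two closed half-lines. Its endpoints are $\pm\sqrt{\,\cdot\,}$ of a quantity whose sign is exactly $\pm\Delta_{k+1}$-type data constant on $T$.
\end{enumerate}

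On each target stratum the fiber type is therefore constant, stratified by its endpoints and interior. Rescaling $x_k$ by $\sqrt{|\cdots|}$ gives an explicit local trivialization. Smoothness of this trivialization holds because the quantity under the root stays bounded away from zero on the strata where it is nonzero, and is identically zero on the strata where the fiber degenerates to a point.

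\textbf{Main obstacle.} I expect the hardest part to be confirming that every fiber stratification is constant along each target stratum. In particular, I must check that the boundary points $x_k=\pm\sqrt{\cdot}$ and $x_k=0$ of the fiber land in the strata of $\mathbb{R}^{n}_{\overline{k},\overline{\sigma}}$ listed in item (ii)(b) uniformly, rather than jumping between strata. I would handle this by writing the explicit rescaling homeomorphism fiberwise and observing that it preserves the defining equalities $\Delta_k=0$, $\pm x_k>0$. Conical smoothness is not required, since only a weakly constructible bundle is claimed.
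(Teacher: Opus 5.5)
Your overall strategy (compute the fiber of $\pi_k$ in the single variable $x_k$, and show it is either a point, where $\pi_k$ is a diffeomorphism onto the stratum, or a collapsed interval) is the same as the paper's. The paper lists the preimages of the target strata and finds either diffeomorphisms or maps equivalent to $[-1,1]\to\{0\}$.

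However, your Step 2 contains a concrete error for $\sigma_k=\epsilon$. The argument of $\rho$ is
\[
|\sigma_k|\Bigl(x_{k+d}-\sum_{i=k}^{k+d-1}|\sigma_i|x_i^2\Bigr)=C-x_k^2,\qquad C=|\sigma_k|\Bigl(x_{k+d}-\sum_{i=k+1}^{k+d-1}|\sigma_i|x_i^2\Bigr),
\]
because $|\sigma_k|^2=1$: the sign $|\sigma_k|$ also multiplies the $x_k^2$ term.

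So for \emph{both} letters the collapsed locus is $\{x_k^2\le C\}$. The fiber over a target point with $y_1=\dots=y_k=0$ is $[-\sqrt{C},\sqrt{C}]$ if $C>0$ and $\{0\}$ if $C\le 0$. It is never the complement of an open interval.

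Your own Case 1 already forces this. With the correct derivative $\partial_{x_k}(C-x_k^2)=-2x_k$, one gets $h'(x_k)=g-2x_k^2\rho'(C-x_k^2)\ge g\ge 0$. Hence $h$ is nondecreasing with $h(0)=0$, and $h^{-1}(0)$ is an interval containing $0$; it cannot be $\{0\}$ together with two half-lines. Your displayed derivative has the opposite sign in front of $\rho'$, from the same slip, and with that sign ``$\rho$ decreasing'' would not give monotonicity. As written, your fiber description and trivialization for $\sigma_k=\epsilon$ are for the wrong set.

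The fix also makes the two letters uniform. On the open set where $C>0$, rescaling $x_k$ by $\sqrt{C}$ trivializes $\pi_k$ as $[-1,1]\times T\to T$. The endpoints are $\{\Delta_k=0,\ \pm x_k>0\}$ and the interior is $\{\Delta_k>0\}$, which is exactly the paper's ``$[-1,1]\to\{0\}$''.

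What remains is the step you left vague: that the sign of $C$ is constant on each target stratum. This is where the letters $\sigma_j$ enter. On a target stratum where also $y_u=0$ for $k<u<j$, one has $C=|\sigma_k||\sigma_j|\Delta_j$, with $\Delta_j$ the target's function. So on the strata $\{\pm y_j>0,\ \Delta_j<0\}$ the fiber is an interval exactly when $\sigma_k\neq\sigma_j$, which is the case distinction in the paper's proof.

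The same identity on the strata $\{\Delta_i>0\}$ (where $y_u=0$ for all $k<u<i$) gives $C=|\sigma_k||\sigma_i|\Delta_i$. So interval fibers also appear there when $\sigma_i=\sigma_k$; for instance, for $k=1$ and $\sigma=\eta\eta$ over $\{y_1=0,\ y_3>y_2^2\}$. Organizing the check by the sign of $C$, as you propose, handles this automatically. The paper's stratum-by-stratum list instead describes those preimages as diffeomorphic.
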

\begin{proof}
    Consider a stratum of the codomain $\mathbb{R}^{n}_{\overline{k+1},\overline{\sigma_{k+d-1} \ldots \sigma_{k+1}}}$.
    We have to check that the corestriction of pinch map on this stratum is a stratified fiber bundle over it.
    To simplify notations, we write $\overline{x}_k$ for the new $k^{\textnormal{th}}$-coordinate after applying the pinch map.
    We denote the strata using a ``path'' notation indicating the order of the choices.
    \begin{enumerate}
        \item \textbf{Bar-strata:} The bar-strata given by $i.\pm$ are unchanged for $i \leq k-1$.
        The preimages of the bar-strata $k.\pm$ are given by the handlebody-bar-strata $k.a.\pm$.
        For both these cases, the pinch map is diffeomorphic.

        \item \textbf{Handlebody-Bar-Strata:} Let $k + 1 \leq j \leq k+d$. If $\sigma_k = \sigma_j$, the preimage of the handlebody-bar-strata $j.a.\pm$ are given by the handlebody-bar-strata $j.a.\pm$, and the pinch is diffeomorphic.
        If $\sigma_k \neq \sigma_j$, the preimages of the handlebody-bar-strata $j.a.\pm$ are given by the union of the three handlebody-bar-strata $j.k.\_$, where the pinch is pointwise given by a map equivalent to $[-1,1] \to \{0\}$.
        On these strata, one can check that the pinch is a fiber bundle.
        For $k+1 \leq i \leq j$, the preimages of the handlebody-bar-strata $j.i.\_$ are given by the corresponding handlebody-bar-strata $j.i.\_$, where the pinch is diffeomorphic.

        \item \textbf{Bottom-Bar-Stratum:} Corestricted to the stratum given by $x_1 = \ldots = x_{k+d} = 0$, the pinch map is the identity, and hence a fiber bundle.
    \end{enumerate}
\end{proof}

In particular, we can compose these pinch maps to get the corollary below.

\begin{definition}[Full Pinch]\label{def:full_pinch}
    As above, let $1 + d \leq k + d \leq n$ and $\sigma = \sigma_{k+d-1} \ldots \sigma_{k} \inset \{\epsilon, \eta\}^d$ be a $d$-duality datum.
    We define the full pinch as the compositions
    \[
        \mathbb{R}^{n}_{\overline{k},\overline{\sigma}} \epi \mathbb{R}^{n}_{\overline{k+1},\overline{\sigma_{k+d-1} \ldots \sigma_{k+1}}} \epi \ldots \epi \mathbb{R}^{n}_{\overline{k+d-1},\overline{\sigma_{k+d-1}}} \epi \mathbb{R}^{n}_{\overline{k+d},\overline{\emptyset}} = \mathbb{R}^{n}_{\overline{k+d}}
    \]
    We define $\sigma \in \Alg{\mathbb{R}_{\overline{k}}^n}(\V) \to \Alg{\mathbb{R}_{\overline{k+d}}^n}(\V) $ to be the composition of the functors induced by the homological maps (see \autoref{subsec:homological})
    \[
        \mathbb{R}_{\overline{k}}^n \longeq \mathbb{R}^{n}_{\overline{k},\sigma} \dashleftarrow \mathbb{R}^{n}_{\overline{k},\overline{\sigma}} \longepi \mathbb{R}^{n}_{\overline{k+d}}
    \]
\end{definition}

\begin{corollary}
    As above, let $1 + d \leq k + d \leq n$ and $\sigma \inset \{\epsilon, \eta\}^d$ be a $d$-duality datum.
    The full pinch $\mathbb{R}^{n}_{\overline{k},\overline{\sigma}} \to \mathbb{R}^{n}_{\overline{k+d}}$ is a weakly constructible bundle, \textit{i.e.} a fiber bundle when restricted to each stratum of the codomain.
\end{corollary}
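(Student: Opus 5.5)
The plan is to reduce the statement to the preceding lemma about the single pinch maps $\pi_k$, together with the observation that weakly constructible bundles are closed under composition. The full pinch is by definition the composite
\[
\mathbb{R}^{n}_{\overline{k},\overline{\sigma}} \xepi[\pi_k] \mathbb{R}^{n}_{\overline{k+1},\overline{\sigma_{k+d-1} \ldots \sigma_{k+1}}} \xepi[\pi_{k+1}] \ldots \xepi[\pi_{k+d-1}] \mathbb{R}^{n}_{\overline{k+d}}.
\]
Each factor is an instance of the preceding lemma, applied to the data $(k+j, \sigma_{k+d-1}\ldots\sigma_{k+j})$ with $0 \leq j < d$. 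The hypothesis $1 + (d-j) \leq (k+j) + (d-j) \leq n$ is clearly preserved, so every factor is a weakly constructible bundle. I will argue by induction on $d$: the case $d=0$ is the identity, and $d=1$ is exactly the lemma.

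For the inductive step, it suffices to show that if $f \in X \to Y$ and $g \in Y \to Z$ are weakly constructible bundles, then so is $g \circ f$. Fix a stratum $Z_q \subset Z$. Since $g$ is a stratified map, $g^{-1}(Z_q)$ is a union of strata $Y_p$ of $Y$, and $g_{|g^{-1}(Z_q)} \in g^{-1}(Z_q) \to Z_q$ is a stratified fiber bundle. Over each $Y_p$ the map $f$ restricts to a fiber bundle $f^{-1}(Y_p) \to Y_p$. Both maps are then locally trivial over $Z_q$: choose a trivializing neighborhood $U \subset Z_q$ for $g$, so that $g^{-1}(U) \cong U \times F$ with $F$ stratified. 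The remaining task is to show that $f^{-1}(U \times F) \to U$ is a stratified fiber bundle.

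This is the main obstacle. A composite of stratified fiber bundles, where the second is only a bundle stratum by stratum, need not obviously be locally trivial over $U$, because the triviality of $f$ is only given over each stratum $U \times F_p$ separately. Proving this in full generality would require the constructible-bundle technology of \cite{AFT17a}. The cleaner route is to avoid the abstract composition argument and use the explicit formulas instead. The full pinch modifies only the coordinates $x_k, \ldots, x_{k+d-1}$, each by multiplication by a smooth nonnegative factor built from $\rho$. So for a stratum $Z_q$ of $\mathbb{R}^n_{\overline{k+d}}$ I will compute its preimage directly. Following the case analysis in the proof of the preceding lemma, the preimage is a union of handlebody-bar-strata of $\mathbb{R}^{n}_{\overline{k},\overline{\sigma}}$. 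On each of these the map is either a diffeomorphism or collapses some coordinates $x_i$ in a product of intervals $\prod [-1,1] \to \{0\}$. The collapses vary smoothly, and equivariantly, in the remaining coordinates, which are untouched and parametrize $Z_q$. This product form in the remaining coordinates provides a global trivialization over $Z_q$, which gives the fiber bundle property directly.

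If the direct computation becomes too heavy, the fallback is to invoke that constructible bundles are closed under composition \cite{AFT17a}. The preceding text remarks that these pinches are in fact constructible bundles, so this closure property would give the result.
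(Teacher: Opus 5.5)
Your fallback is exactly the paper's proof. The paper proves the corollary in one line by citing \cite[Prop.~6.1.8]{AFR18a} for the factorization $\pi_{k+d-1}\circ\cdots\circ\pi_k$. That reference is the stratified homotopy hypothesis paper rather than \cite{AFT17a}; in context it supplies closure of constructible bundles under composition. The paper also relies implicitly on the remark before the lemma that each pinch is a genuine constructible bundle, not merely a weakly constructible one. Your worry that the weak notion is not obviously closed under composition is well placed: $f$ is only trivial over each stratum of $g^{-1}(Z_q)$ separately. That is why the argument goes through the stronger notion.

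Your primary route, the direct computation, is a genuinely different approach. It would make the corollary independent of the unproved ``one can easily see'' remark, at the price of a case analysis over all strata of $\mathbb{R}^n_{\overline{k},\overline{\sigma}}$. As sketched, though, it misdescribes the geometry in two places.

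\begin{itemize}
\item The fibres are not always products of intervals. For $d=2$ and $\sigma=\eta\eta$, take the stratum $\{x_1=\cdots=x_{k+1}=0,\ x_{k+2}=c>0\}$ of $\mathbb{R}^n_{\overline{k+2}}$. Over each of its points the fibre is the closed disc $\{x_1=\cdots=x_{k-1}=0,\ x_k^2+x_{k+1}^2\le c\}$, collapsed to that point. So the fibre type must be computed stratum by stratum and trivialised by a $c$-dependent radial rescaling. You then need to check that this rescaling preserves the induced stratification.
\item The coordinates among $x_k,\ldots,x_{k+d-1}$ that are not collapsed are not untouched. Each is multiplied by a factor such as $\rho(\cdots)+\sum_{i<k+j}x_i^2$, which depends on the other, already pinched, coordinates. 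So the parametrisation of $Z_q$ is not by untouched coordinates. You must verify, one pinch at a time, that on the relevant region the map is a diffeomorphism onto the stratum. This holds because the factor is positive there and non-decreasing in $|x_{k+j}|$, so the modified coordinate is strictly monotone.
\end{itemize}

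With these repairs the explicit route works. The citation buys brevity; the computation buys self-containment.
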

\begin{proof}
    This is a direct consequence of \cite[Prop. $6.1.8$]{AFR18a}.
\end{proof}

We briefly check that, for $d = 1$, the notation $\eta(f)$ and $\epsilon(f)$ introduced above do indeed exhibit an adjunction in the higher Morita category.

\begin{lemma}[Duality Data Form Adjunctions]\label{lem:graphical_adjunction}
    Let $k < n$ and $f \inset \Alg{\mathbb{R}_{\overline{k}}^n}(\V)$ be a $k$-morphism in $\Mor_n(\V)$.
    Let $f^{\circ}$ be the $k$-morphism induced by $x_k \mapsto -x_k$ on $\mathbb{R}^{n}_{\overline{k}}$.
    Then, the $(k+1)$-morphisms $\eta(f)$ and $\epsilon(f)$ are respectively the unit and counit of an adjunction $f \dashv f^{\circ}$ in $\Mor_n(\V)$.
\end{lemma}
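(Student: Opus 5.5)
The plan is to reduce to a computation in the homotopy $2$-category of the relevant hom-category of $\Mor_n(\V)$ and to check there the two triangle identities. By \autoref{cor:riehl_verity} (and the definition of adjoint via $\ho_2$), it suffices to produce $2$-cells $\eta(f) \in \id \Rightarrow f^{\circ}\circ f$ and $\epsilon(f) \in f \circ f^{\circ} \Rightarrow \id$ in $\ho_2\big(\Mor_n(\V)(a_0,b_0)\ldots(a_{k-2},b_{k-2})\big)$ and to verify $(\epsilon(f) f)\circ(f\eta(f)) \simeq \id_f$ and $(f^{\circ}\epsilon(f))\circ(\eta(f) f^{\circ})\simeq \id_{f^\circ}$. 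Using \autoref{const:suspension} and the fact that all the stratifications involved only modify the coordinates $x_k,\ldots,x_{k+1}$ beyond the bar-strata in directions $<k$, I would first reduce to the case $k$ arbitrary with $d=1$, treating the first $k-1$ coordinates as spectators.

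First step: identify source and target. Restricting the algebra $\eta(f) \in \Alg{\mathbb{R}^n_{\overline{k+1}}}(\V)$ along $\dom$ and $\cod$ of \autoref{def:(co)domain} (in the $x_{k+1}$ direction), I would check, using \autoref{lem:commutative_diagrams}, that the corresponding region $x_{k+1}<0$ (resp. $>0$) of $\mathbb{R}^n_{\overline{k},\overline{\eta}}$ pinches to the identity $k$-morphism (empty bar) on one side and, on the other side, to the parabola $x_{k+1} = x_k^2$ sliced at a positive level, which is two parallel $(n-k)$-dimensional strata meeting $x_k=\pm\sqrt{x_{k+1}}$. The latter is identified with $\mathbb{R}^n_{\overline{k}\circ\overline{k}}$ via an isotopy, with the left sheet carrying $f$ and the right sheet the reflection $f^{\circ}$; the composition map $\circ$ of \autoref{def:composition} then gives $f^{\circ}\circ f$ (keeping \autoref{warning:order} in mind). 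Symmetrically for $\epsilon(f)$ with the parabola opening downward.

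Second step: the triangle identities. The composite $(\epsilon(f) f)\circ(f\eta(f))$ is, by construction of composition and whiskering in $\Mor_n(\V)$, the algebra obtained by pulling back along a pinch from a stratification of $\mathbb{R}^n$ whose $(n-k)$-dimensional stratum is an S-shaped ("zigzag") curve in the $(x_k,x_{k+1})$-plane times the remaining coordinates. I would write down this zigzag stratification explicitly, exhibit an isotopy of stratified spaces straightening it to the graph $x_k=0$ (i.e. to the stratification pulled back from $\mathbb{R}^n_{\overline{k}}$ along the projection forgetting $x_{k+1}$, which represents $\id_f$), and check that this isotopy is compatible with the pinch maps, so that \autoref{lem:commutative_diagrams} yields an equivalence of the induced algebras. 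The open-embedding/isotopy invariance and refinement invariance of homological maps is what makes these straightenings produce equivalences rather than merely maps.

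The main obstacle is the second step: making precise that the categorical composites in $\Mor_n(\V)$ (built from the Segal-space model of \cite{Hau17a,Hau23}) agree with factorization homology over the glued/pinched zigzag stratification. I would handle it by expressing each whiskering and vertical composite as a diagram of weakly constructible bundles (the composition maps $\circ$ in directions $k$ and $k+1$ together with the pinches $\pi_k$), checking commutativity on the level of $\umfld_{/-}$ by hand (the only delicate squares being handled as in \autoref{rem:vector_fields}), and then invoking \autoref{lem:commutative_diagrams} once for the whole diagram; only the identification in $\ho_2$ is needed, so no higher coherences must be tracked.
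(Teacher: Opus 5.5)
Your proposal follows the paper's proof. The paper identifies $\cod(\eta(f))\simeq f^{\circ}\circ f$ and $\dom(\epsilon(f))\simeq f\circ f^{\circ}$ ``by inspection'' from \autoref{def:(co)domain} and \autoref{def:composition}, as in your first step, and then invokes ``the usual graphical calculus for adjunctions'' for the triangle identities, which is what your snake-straightening step spells out.

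One caution when you write that step out: the pinches collapse whole regions, so no straightening isotopy can make the relevant diagram of homological morphisms commute on the nose, and a single application of \autoref{lem:commutative_diagrams} will not suffice. You will need an extra input, such as unitality of the compositions in $\Mor_n(\V)$; the paper also leaves this point implicit.
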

\begin{proof}
    By inspection, $\cod(\eta(f)) \simeq f^{\circ} \circ f$ and $\dom(\epsilon(f)) \simeq f \circ f^{\circ}$ using \autoref{def:(co)domain} and \autoref{def:composition}.
    The other domain and codomain can similarly be identified.
    The triangle identities then follow from the usual graphical calculus for adjunctions.
\end{proof}

Note that, by construction, we have the exchange property $\left(f^{\circ}\right)^{\circ} \simeq f$ and $\eta(f^{\circ}) \simeq \epsilon(f)^{\circ}$.
We then have a direct corollary of \autoref{prop:free_full_adj} and \autoref{lem:graphical_adjunction} above.

\begin{corollary}[Units in Morita]\label{cor:units_in_morita}
    Let $k < n$ and $f \inset \Alg{\mathbb{R}_{\overline{k}}^n}(\V)$ be a $k$-morphism in $\Mor_n(\V)$.
    We have a lift $c_k \epi \Free^{\{k\}}(c_k) \to \Mor_n(\V)$ where all the induced (co)units are given by either of the following four algebras: $\eta(f)$, $\epsilon(f)$, $\eta(f^{\circ})$, $\epsilon(f^{\circ})$.
\end{corollary}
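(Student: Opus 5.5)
We will use the colimit presentation of $\Free^{\{k\}}(c_k)$ from \autoref{prop:free_full_adj}. A functor $\Free^{\{k\}}(c_k) \to \Mor_n(\V)$ is the same as a point in the limit of the zigzag
\[
\cdots \to [\Adj_k,\Mor_n(\V)] \xrightarrow{L^*} [c_k,\Mor_n(\V)] \xleftarrow{R^*} [\Adj_k,\Mor_n(\V)] \to \cdots
\]
By \autoref{cor:riehl_verity}, each of these maps is a monomorphism onto the components of left (resp.\ right) adjoint $k$-morphisms. Such a point therefore amounts to a sequence of $k$-morphisms $(f_m)_{m\in\mathbb{Z}}$ together with, for each $m$, an adjunction $f_m \dashv f_{m+1}$. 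For each such adjunction the data lies in a \emph{space} that is either empty or contractible-over-the-base, so no further coherences need to be chosen.

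Accordingly, we set $f_0 = f$ and define $f_m$ to be $f$ for $m$ even and $f^{\circ}$ for $m$ odd. For the adjunction $f_{2j} \dashv f_{2j+1}$, that is $f \dashv f^{\circ}$, we use \autoref{lem:graphical_adjunction}, which provides the unit $\eta(f)$ and counit $\epsilon(f)$. For the adjunction $f_{2j+1} \dashv f_{2j+2}$, that is $f^{\circ} \dashv (f^{\circ})^{\circ} \simeq f$, we apply \autoref{lem:graphical_adjunction} to the $k$-morphism $f^{\circ}$. This gives unit $\eta(f^{\circ})$ and counit $\epsilon(f^{\circ})$, and we use the exchange identification $(f^{\circ})^{\circ}\simeq f$ to identify the right adjoint with $f$ itself.

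Each of these gives a map $\Adj_k \to \Mor_n(\V)$ whose $L$- and $R$-restrictions agree, up to the chosen equivalences, with the prescribed $f_m$. By the monomorphism property of \autoref{cor:riehl_verity}, these glue to a point of the limit. By the universal property of the colimit in \autoref{prop:free_full_adj}, this point yields the desired functor $\Free^{\{k\}}(c_k) \to \Mor_n(\V)$. Its restriction along $f^{\vee 0}$ is $f$, so it is the required lift of $c_k \epi \Free^{\{k\}}(c_k)$. By construction, the (co)units $\eta(f^{\vee m})$ and $\epsilon(f^{\vee m})$ appearing in \autoref{cor:lifting_once_fully_dualizable} are exactly the (co)units of the $m$-th adjunction, hence each is one of $\eta(f),\epsilon(f),\eta(f^{\circ}),\epsilon(f^{\circ})$.

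The main subtlety is making sure the adjunctions glue compatibly at each $c_k$ in the zigzag. That is, the right adjoint produced by the $m$-th adjunction must coincide with the left adjoint used in the $(m+1)$-st. This is precisely where the equivalence $(f^{\circ})^{\circ}\simeq f$ enters, along with the fact that identifications of $k$-morphisms, rather than any higher data, are all that is needed. The latter holds by \autoref{cor:riehl_verity}: once the underlying $k$-morphisms agree, the space of compatible adjunction structures is contractible. One may also note, via the exchange property $\eta(f^{\circ})\simeq\epsilon(f)^{\circ}$, that the list of four (co)units is closed under $(-)^{\circ}$, which is consistent with the $2$-periodicity of the sequence.
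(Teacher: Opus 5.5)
Your proposal is correct and is exactly the unpacking the paper intends when it calls this a direct corollary of \autoref{prop:free_full_adj} and \autoref{lem:graphical_adjunction}: you alternate $f, f^{\circ}$ along the zigzag, use the adjunctions $f \dashv f^{\circ}$ and $f^{\circ} \dashv (f^{\circ})^{\circ} \simeq f$, and observe that gluing over the one-dimensional indexing poset only requires these identifications of $k$-morphisms. One small slip: your displayed zigzag has $L^*$ and $R^*$ swapped relative to the paper's orientation and to your own convention $f_m \dashv f_{m+1}$, which is harmless here by $2$-periodicity.
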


We now prove an associativity lemma that can be understood as ``$\sigma(\eta(M)) \simeq \sigma\eta(M)$'' (resp. for $\epsilon$).
This lemma is key to our inductive process in the proof of \autoref{thm:full_dualizability_criterion}.

\begin{lemma}[Left Associativity]\label{lem:associativity}
    Let $1 + d \leq k + d \leq n$ with $d \geq 2$ and let $\sigma = \sigma_{k+d-1} \ldots \sigma_{k} \inset \{\epsilon, \eta\}^d$ be a $d$-duality datum.
    Define $\sigma' = \sigma_{k+d-1} \ldots \sigma_{k+2} \inset \{\epsilon, \eta\}^{d-1}$.
    We have a commutative diagram
\[\begin{tikzcd}
	& {\Alg{\mathbb{R}^n_{\overline{k+1}}}(\V)} \\
	{\Alg{\mathbb{R}^n_{\overline{k}}}(\V)} && {\Alg{\mathbb{R}^n_{\overline{k+d}}}(\V)}
	\arrow["{\sigma'}", from=1-2, to=2-3]
	\arrow["{\sigma_{k+1}}", from=2-1, to=1-2]
	\arrow["\sigma"', from=2-1, to=2-3]
\end{tikzcd}\]
\end{lemma}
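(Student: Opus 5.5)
Both routes around the triangle are composites of functors induced by homological maps between stratified spaces. The plan is therefore to realise the triangle as a commutative diagram of such maps and then invoke \autoref{lem:commutative_diagrams}.

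Unwinding \autoref{def:full_pinch}, the functor $\sigma$ is induced by the zigzag
\[
\mathbb{R}^n_{\overline{k}} \longeq \mathbb{R}^n_{\overline{k},\sigma} \dashleftarrow \mathbb{R}^n_{\overline{k},\overline{\sigma}} \xepi[\pi_k] \mathbb{R}^n_{\overline{k+1},\overline{\sigma_{k+d-1}\ldots\sigma_{k+1}}} \epi \cdots \epi \mathbb{R}^n_{\overline{k+d}} .
\]
The composite $\sigma'\circ\sigma_{k+1}$ (where $\sigma_{k+1}$ denotes the length-one datum applied at level $k$, as in the statement) is induced by a concatenation of two zigzags:
\[
\mathbb{R}^n_{\overline{k}} \longeq \mathbb{R}^n_{\overline{k},\sigma_{k+1}} \dashleftarrow \mathbb{R}^n_{\overline{k},\overline{\sigma_{k+1}}} \epi \mathbb{R}^n_{\overline{k+1}} \longeq \mathbb{R}^n_{\overline{k+1},\sigma'} \dashleftarrow \mathbb{R}^n_{\overline{k+1},\overline{\sigma'}} \epi \mathbb{R}^n_{\overline{k+d}} .
\]
The aim is to build a single intermediate stratified space $W$ (or a small diagram of them) with homological maps to both zigzags, so that every square or triangle commutes strictly at the level of stratified maps. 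The natural candidate for $W$ is the fully-barred handlebody $\mathbb{R}^n_{\overline{k},\overline{\sigma}}$, possibly after a common refinement by the strata of $\mathbb{R}^n_{\overline{k},\overline{\sigma_{k+1}}}$.

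\textbf{Step 1: the refinement comparison.} Produce an isotopy, built from the open embedding that rescales $x_{k+d}$ and permutes coordinates, which carries the handlebody stratification for $\sigma$ onto the one obtained by first forming the $\sigma_{k+1}$-handle and then the $\sigma'$-handle. Concretely, both are stratifications cut out by quadrics in $x_k,\ldots,x_{k+d}$. After the substitution $x_{k+d}\mapsto x_{k+d}-|\sigma_k|x_k^2$ (the shearing used to identify handlebody and bar stratifications), the remaining quadric is the $\sigma'$-one in the shifted coordinates. This identifies $\mathbb{R}^n_{\overline{k},\sigma}$ with the image of $\mathbb{R}^n_{\overline{k+1},\sigma'}$ under the first pinch, and the identifications are open embeddings isotopic to the identity. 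Open embeddings act on algebras through equivalences, so they can be inserted freely.

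\textbf{Step 2: the pinch comparison.} Check that the first pinch $\pi_k$ of the full pinch agrees, on the refined space, with the pinch $\mathbb{R}^n_{\overline{k},\overline{\sigma_{k+1}}}\epi \mathbb{R}^n_{\overline{k+1}}$ followed by the refinement $\mathbb{R}^n_{\overline{k+1},\overline{\sigma'}} \dashrightarrow \mathbb{R}^n_{\overline{k+1},\sigma'}$. In other words, the square formed by the refinement, the two pinches and the refinement commutes strictly. The reason is that both pinches have the formula $x_k\mapsto x_k(\rho(\cdots)+\sum_{i<k} x_i^2)$ and only modify $x_k$. The remaining pinches $\pi_{k+1},\ldots,\pi_{k+d-1}$ then literally coincide with the full pinch of $\sigma'$.

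\textbf{Step 3: conclusion.} With all squares commuting as homological maps, \autoref{lem:commutative_diagrams} gives the commutative triangle of functors on algebras. Here \autoref{lem:homological_composition} and \autoref{lem:lifting_homology} guarantee that composites of refinements, weakly constructible bundles and open embeddings are homological with unique lifts.

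\textbf{Main obstacle.} I expect Step 1 to be the hard part: exhibiting explicitly that the two quadric-defined stratifications are related by a stratified isotopy compatible with the pinch. This matters because the two shapes differ a priori: $\rho$ vanishes in different regions and the sign conventions $\Delta_i$ differ. If strict equality fails, I would first try to replace it by a zigzag of refinements and isotopy equivalences. Those induce equivalences on $\Alg{-}(\V)$ by condition (i) of \autoref{def:homological_morphism}, and that would still suffice to conclude commutativity.
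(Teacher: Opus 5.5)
Your architecture matches the paper's. You realise both composites as zigzags of homological maps, join them by a strictly commuting diagram, and apply \autoref{lem:commutative_diagrams}. Your remark that $\pi_{k+1},\dots,\pi_{k+d-1}$ are literally the full pinch of $\sigma'$ is also correct. The gap is in Steps 1--2, which carry all of the content.

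Write $\sigma_k$ for the first letter and $\sigma'=\sigma_{k+d-1}\cdots\sigma_{k+1}$. Your shear $x_{k+d}\mapsto x_{k+d}-|\sigma_k|x_k^2$ removes the $x_k$-dependence of the quadric, which is exactly the direction $\pi_k$ collapses. It leaves the cylinder $\mathbb{R}_{x_k}\times(\sigma'\text{-quadric})$. The shear depends on $x_k$, while $\pi_k$ rescales $x_k$ by a non-constant factor (zero where $\rho$ vanishes). Comparing $(k+d)$-th coordinates therefore shows it cannot intertwine $\pi_k$ with any pinch that only modifies $x_k$. In particular it cannot match the length-one pinch where the second zigzag begins.

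Two further points in these steps do not work as written:
\begin{itemize}
\item ``The image of $\mathbb{R}^n_{\overline{k+1},\sigma'}$ under the first pinch'' runs against the direction of $\pi_k$.
\item The composite in Step 2, namely the pinch $\mathbb{R}^n_{\overline{k},\overline{\sigma_k}}\to\mathbb{R}^n_{\overline{k+1}}$ followed by $\mathbb{R}^n_{\overline{k+1},\overline{\sigma'}}\dashrightarrow\mathbb{R}^n_{\overline{k+1},\sigma'}$, does not compose. Saying that both pinches only modify $x_k$ hides precisely the difference between their $\rho$-arguments.
\end{itemize}

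The missing idea is to shear away the $\sigma'$-part instead. Let $\psi$ put $y=x_{k+d}-\sum_{i=k+1}^{k+d-1}|\sigma_i|x_i^2$ in slot $k+1$ and move $x_{k+1},\dots,x_{k+d-1}$ up one slot. This $\psi$ does not involve $x_k$. Since $x_{k+d}-\sum_{i=k}^{k+d-1}|\sigma_i|x_i^2=y-|\sigma_k|x_k^2$, it intertwines $\pi_k$ with the length-one pinch on the nose. It has three further properties:
\begin{itemize}
\item It carries $\mathbb{R}^n_{\overline{k},\sigma}$ to the length-one handle stratification.
\item It carries $\mathbb{R}^n_{\overline{k+1},\sigma'}$ to $\mathbb{R}^n_{\overline{k+1}}$, which is the identification used to define the functor $\sigma'$.
\item It pulls $\mathbb{R}^n_{\overline{k},\overline{\sigma_k}}$ back to the paper's partially-barred stratification $\mathbb{R}^n_{\overline{k},\overline{\sigma_k},\sigma'}$. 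This is the coarsest refinement of $\mathbb{R}^n_{\overline{k},\sigma}$ over which $\pi_k$ is a stratified map to $\mathbb{R}^n_{\overline{k+1},\sigma'}$.
\end{itemize}
By minimality, $\mathbb{R}^n_{\overline{k},\overline{\sigma}}$ refines it, so no extra common refinement is needed. This space is the middle row of the paper's diagram, and with it every cell commutes strictly.

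Finally, drop your fallback. A refinement induces a localization of operads, so restricting algebras along it is fully faithful but not an equivalence. Condition (i) of \autoref{def:homological_morphism} concerns only isotopy equivalences. A zigzag through refinements therefore cannot be inverted on $\Alg{-}(\V)$.
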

\begin{proof}
    One can define a ``partially-bar'' handlebody stratification $\mathbb{R}^{n}_{\overline{k},\overline{\sigma_k},\sigma'}$, refining $\mathbb{R}^{n}_{\overline{k},\sigma}$ and minimal such that the pinch map $\mathbb{R}^{n}_{\overline{k},\overline{\sigma_k},\sigma'} \xto[\pi_k] \mathbb{R}^{n}_{\overline{k+1},\sigma'}$ is a map of stratified spaces.
    We then have a commutative diagram of homological maps between stratified spaces
\[\begin{tikzcd}
	{\mathbb{R}^n_{\overline{k}}} & {\mathbb{R}^n_{\overline{k}, \sigma_{k+1}}} & {\mathbb{R}^n_{\overline{k}, \overline{\sigma_{k+1}}}} & {\mathbb{R}^n_{\overline{k+1}}} \\
	{\mathbb{R}^n_{\overline{k}, \sigma}} && {\mathbb{R}^n_{\overline{k}, \overline{\sigma_{k+1}}, \sigma'}} & {\mathbb{R}^n_{\overline{k+1}, \sigma'}} \\
	{\mathbb{R}^n_{\overline{k}, \overline{\sigma}}} &&& {\mathbb{R}^n_{\overline{k+1}, \overline{\sigma'}}} & {\mathbb{R}^n_{\overline{k+d}}}
	\arrow["\sim"{marking, allow upside down}, no head, from=1-1, to=1-2]
	\arrow["\sim"{marking, allow upside down}, no head, from=1-1, to=2-1]
	\arrow[dashed, from=1-3, to=1-2]
	\arrow[two heads, from=1-3, to=1-4]
	\arrow["\sim"{marking, allow upside down}, no head, from=1-3, to=2-3]
	\arrow["\sim"{marking, allow upside down}, no head, from=1-4, to=2-4]
	\arrow[dashed, from=2-3, to=2-1]
	\arrow[two heads, from=2-3, to=2-4]
	\arrow[dashed, from=3-1, to=2-1]
	\arrow[dashed, from=3-1, to=2-3]
	\arrow[two heads, from=3-1, to=3-4]
	\arrow[dashed, from=3-4, to=2-4]
	\arrow[two heads, from=3-4, to=3-5]
\end{tikzcd}\]
    Clearly, this diagram satisfies the condition of \autoref{lem:commutative_diagrams} and hence induces a commutative diagram of functors between the corresponding categories of algebras.
    The outer top composition induces $\sigma' \circ \sigma_{k+1}$, while the outer bottom composition induces $\sigma$.
\end{proof}

\subsection{Dualizability of Iterated Bimodules}\label{subsec:dual}

This subsection is solely devoted to the result below.
We adapt here an argument from \cite[Prop. $4.6.2.13$]{Lur17} (which is the $n = 1$ case of our result below), combining it with the idea from \cite[Cor. $3.8$]{Ram24a} (which gives conditions for when an (op)lax-module functor might be an module functor).

\begin{construction}[Actions for \autoref{prop:existence_dual}]\label{constr:actions_existence_dual}
    Let $M$ be a $\mathbb{R}^n_{\overline{n}}$-algebra in $\V$.
    We have a homological map given by the closed strata inclusion $\mathbb{R}_{\leq 0} \xmono[\textnormal{cl}] \mathbb{R}^n_{\overline{n}}$ of the last coordinate (resp. for $\geq$).
    By \autoref{lem:lifting_homology}, this induces a $\mathbb{R}_{\leq 0}$-algebra (resp. $\mathbb{R}_{\geq 0}$-algebra) that we denote $A \curvearrowright M$ (resp. $M \curvearrowleft B$).
    This algebra can be seen as $1$-morphisms $M \in A \to \mathbbm{1}_{\V} \in \Mor_1(\V)$ (resp. $M \in \mathbbm{1}_{\V} \to B \in \Mor_1(\V)$).
    We call $A$ (resp. $B$) the domain (resp. codomain) of $M$.
    Note that this definition corresponds to using \autoref{lem:lifting_homology} on \autoref{def:(co)domain}, and additionally forgetting the extra structure of a $\mathbb{R}^{n}_{\overline{n-1}}$-algebra by pushing forward through the fiber bundle $\mathbb{R}^{n}_{\overline{n-1}} \epi \mathbb{R}$ projecting on the last coordinate.
\end{construction}

\begin{proposition}[Dualizability for Iterated Bimodules]\label{prop:existence_dual}
    Let $M$ be a $\mathbb{R}^n_{\overline{n}}$-algebra in $\V$, seen as an $n$-morphism in $\Mor_n(\V)$.
    Using the notations of \autoref{constr:canonical_action_units} above,
    \begin{enumerate}
        \item $M$ is a left adjoint in $\Mor_n(\V)$ if and only if the induced $\mathbb{R}_{\geq 0}$-algebra $M \curvearrowleft B$, seen as a $1$-morphism $M \in \mathbbm{1}_{\V} \to B \in \Mor_1(\V)$, is a left adjoint,
        \item $M$ is a right adjoint in $\Mor_n(\V)$ if and only if the induced $\mathbb{R}_{\leq 0}$-algebra $A \curvearrowright M$, seen as a $1$-morphism $A \in \Mor_1(\V) \to M \in \mathbbm{1}_{\V}$, is a right adjoint.
    \end{enumerate}
\end{proposition}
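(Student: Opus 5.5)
The plan is to reduce to the case $n=1$ by identifying the relevant mapping $(\infty,1)$-category in $\Mor_n(\V)$ with a category of modules, and then to adapt the argument of \cite[Prop. $4.6.2.13$]{Lur17}. I treat only the left adjoint case; the right adjoint case follows by applying the reflection $x_n \mapsto -x_n$, which exchanges $A$ and $B$ (compare the construction of $f^{\circ}$ in \autoref{lem:graphical_adjunction}).

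\textbf{Step 1: identify the hom-category.} Let $\partial M$ be the pair of parallel $(n-1)$-morphisms $P, Q$ forming the boundary of $M$. The hom-$(\infty,1)$-category $\Mor_n(\V)(\ldots)(P,Q)$ is the category of $\mathbb{R}^n_{\overline{n}}$-algebras with fixed restrictions $P$ and $Q$ to $x_n<0$ and $x_n>0$. By \autoref{cor:algebra_homology}, such an algebra is a homology theory. Collar-gluing along the last coordinate, using condition (2) of \autoref{def:homology_theory}, identifies it with a bimodule $M$ over the algebras $\int_{\mathbb{R}^{n-1}\times\{\pm\}}$ attached to the $(n-1)$-strata. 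Vertical composition of $n$-morphisms is then the relative tensor product over these algebras, by \autoref{def:composition} together with \autoref{lem:commutative_diagrams}.

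\textbf{Step 2: reduce to dualizable modules.} Pushing forward along the fiber bundle $\mathbb{R}^n_{\overline{n-1}} \epi \mathbb{R}$, as in \autoref{constr:actions_existence_dual}, turns the pair $P, Q$ into the algebras $A$ and $B$. The bimodule $M$ becomes an $A$-$B$ bimodule: precisely, $M\curvearrowleft B$ and $A\curvearrowright M$ with compatible actions. Adjunction data for $M$ in $\Mor_n(\V)$ consists of a candidate $M^\vee$ together with a unit and a counit, which are maps of $\mathbb{R}^n_{\overline{n}}$-algebras. I will show that such data is equivalent to duality data for $M$ as a right $B$-module in $\Mor_1(\V)$. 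Take $M^\vee = \Hom_B(M,B)$, which inherits an $A$-action by functoriality in $M$. The counit $M\otimes_A M^\vee \to \ldots$ and the unit $\ldots \to M^\vee\otimes_B M$ are then the standard evaluation and coevaluation maps.

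\textbf{Main obstacle.} The hard part is the converse: showing that the coevaluation $\mathbbm{1} \to M\otimes_B M^\vee$, obtained in $\Mor_1(\V)$, is automatically compatible with all the $\mathbb{R}^n_{\overline{n-1}}$-algebra structure, that is, that it is a map of $A$-bimodules in the iterated sense. This is exactly where the Ramzi-type argument \cite[Cor. $3.8$]{Ram24a} enters. The evaluation map is strictly $A$-linear, so the coevaluation acquires a canonical lax structure. The triangle identities then force this lax structure to be strong, since the comparison maps are inverted by the mates of invertible maps. I would formalize this by applying \autoref{thm:riehl_verity}, or \autoref{cor:riehl_verity}, to the $(\infty,2)$-category of $A$-modules in $\V$ and its lax variant. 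The goal is to show that being a left adjoint is detected after forgetting the $A$-structure. For the forward direction, I would apply the homological closed-strata forgetful functor of \autoref{constr:actions_existence_dual}, which preserves adjunctions because it is compatible with composition by \autoref{lem:commutative_diagrams}.
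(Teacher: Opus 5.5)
Your outline (reflect to get the second statement, then combine \cite[Prop. $4.6.2.13$]{Lur17} with a Ramzi-type argument) names the right ingredients, but the step that carries the content is missing.

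An $n$-morphism with fixed boundary is not just an $A$-$B$-bimodule in $\V$. The paper pushes forward along $(x_1,\ldots,x_n)\mapsto (x_n,\sqrt{\sum_{i<n}x_i^2})$ and uses the iterated description of $\Mor_n$. This exhibits $M$ as a bimodule in $\cat{W}=\Mod_W(\V)$, where $W$ is the $\mathcal{E}_2$-algebra carried by the region $r>0$ off the $x_n$-axis. The adjunction you need therefore lives in $\Mor_1(\cat{W})$. Already for $n=2$ and $P=Q=\mathrm{id}_C$, these $n$-morphisms are $\mathcal{E}_2$-$C$-modules, not $C$-$C$-bimodules.

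Your Step 1 never produces $W$. Your candidate dual, the $B$-dual of $M$ with an $A$-action ``by functoriality'', is only a $B$-$A$-bimodule in $\V$. Nothing identifies the two $W$-actions arriving through $B$ and through $A$. So it is not yet an $n$-morphism, and your unit and counit are not yet $2$-cells of $\Mor_1(\cat{W})$.

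In the paper, \cite[Prop. $4.6.2.13$]{Lur17} applied inside $\cat{W}$ disposes of $A$. The real question then becomes: if the composite $\Mod_B\xrightarrow{F}\cat{W}\to\V$ induced by $M$ has a $\V$-linear left adjoint, does $F$ itself have a $\cat{W}$-linear one? Existence is shown by observing that the $w\in\cat{W}$ for which $\cat{W}(w,F(-))$ is corepresentable include the free modules $W\otimes v$ and are closed under geometric realizations. Linearity is then Ramzi's argument.

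The mechanism you propose for this step does not work as stated. Triangle identities do not force the oplax structure on the left adjoint of a strictly linear functor to be strong. The mate (trace) construction of \cite[Prop. $3.4$]{Ram24a} inverts the structure map $G(w\otimes x)\to w\otimes G(x)$ only when $w$ is dualizable in the acting monoidal category. One then needs that category to be generated under colimits by dualizable objects; in the paper, $\cat{W}$ is generated by its unit $W$. Formulated over $A$-modules in $\V$, as you do, no such generation is available.

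Nor does \autoref{thm:riehl_verity} help. It says the space of adjunction data extending a given left adjoint is contractible. It does not say that an adjunction lifts along the lax monoidal forgetful functor $\cat{W}\to\V$, or that lax structures become strong.

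A smaller point on the forward direction: $M\mapsto (M\curvearrowleft B)$ is not a functor of $(\infty,2)$-categories, since it resets the source to $\mathbbm{1}_{\V}$. So ``compatible with composition'' is not the reason. Rather, forgetting $A$ is precomposition with the left adjoint $\mathbbm{1}_{\V}\to A$ given by $A$ as a right $A$-module. Forgetting $W$ is composition with the free--forgetful adjunction $\V\rightleftarrows\cat{W}$. Left adjoints are preserved because adjunctions compose.
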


\begin{warning}[Adjointable]\label{warning:adjointable}
    Note that Lurie uses the term ``left adjointable'' to mean what we call here ``being a right adjoint'' or ``having a left adjoint''.
\end{warning}

\begin{proof}
    The second statement follows from the first by the ``$n$-op'' operation $x_{n} \mapsto - x_{n}$.
    Consider the following composition of constructible bundles $\mathbb{R}^n_{\overline{n}} \epi \left(\mathbb{R}^2_{\overline{2}}\right)_{r \geq 0} \epi \mathbb{R}^1_{\overline{1}}$, where the coordinates of the middle copy of $\mathbb{R}^2$ are denoted $(x_n, r)$, defined by
    \[
        (x_1, \ldots, x_n) \longmapsto \left(x_n, \sqrt{\sum_{i=1}^{n-1} x_i^2} \right) \longmapsto x_n
    \]
    Using the iterated construction of the Morita category \cite[Cor. $7.6$]{Hau23}, or the factorization homology construction of composition in $\Mor_n(\V)$, we see that $M$ is a left adjoint in $\Mor_n(\V)$ if and only if it is a left adjoint as a $1$-morphism in $\Mor_1(\Mod_W(\V))$ where $W$ is the $\mathcal{E}_2$-algebra supported on $r > 0$ in the pushforward $(x_n, r)^*(M)$.
    By \cite[Prop. $4.6.2.13$]{Lur17}, this is equivalent to the induced $\mathbb{R}_{\geq 0}$-algebra being a left adjoint in $\Mor_1(\Mod_W(\V))$.
    It remains to show that this is equivalent to being a left adjoint in $\Mor_1(\V)$.
    One direction comes from stability of adjunctions under composition, so it suffices to show that being a left adjoint in $\Mor_1(\V)$ implies being a left adjoint in $\Mor_1(\Mod_W(\V))$.
    The first part of the argument relies on a similar idea to the proof of \cite[Prop. $4.6.2.13$]{Lur17}, while the second part is a consequence of (a harmless generalization) of the argument in \cite[Cor. $3.8$]{Ram24a}.

    Let $\cat{W} = \Mod_W(\V)$ and let $B$ denote the induced $\mathbb{R}_{\geq 0}$-algebra.
    Note that $\Mod_B(\V) \simeq \Mod_B(\cat{W})$, so we can abuse notation and denote both categories by $\Mod_B$.
    Consider the following lifting problem:
\begin{equation}\label{eq:W_lift}
\begin{tikzcd}
	&& {\cat{W}} \\
	\\
	\V &&&& {\Mod_B}
	\arrow[""{name=0, anchor=center, inner sep=0}, curve={height=6pt}, from=1-3, to=3-1]
	\arrow[""{name=1, anchor=center, inner sep=0}, "{W \otimes -}"', curve={height=6pt}, from=3-1, to=1-3]
	\arrow[""{name=2, anchor=center, inner sep=0}, "{B^{\vee}\otimes -}", curve={height=-6pt}, from=3-1, to=3-5]
	\arrow[""{name=3, anchor=center, inner sep=0}, "{B \otimes -}", curve={height=-6pt}, from=3-5, to=1-3]
	\arrow[""{name=4, anchor=center, inner sep=0}, curve={height=6pt}, dashed, from=3-5, to=1-3]
	\arrow[""{name=5, anchor=center, inner sep=0}, "{B \otimes -}", curve={height=-6pt}, from=3-5, to=3-1]
	\arrow["\dashv"{anchor=center, rotate=141}, draw=none, from=1, to=0]
	\arrow["\dashv"{anchor=center, rotate=-90}, draw=none, from=2, to=5]
	\arrow["\dashv"{anchor=center, rotate=-148}, draw=none, from=4, to=3]
\end{tikzcd}
\end{equation}
    where we'd like to show the dotted left adjoint exists.
    We can consider the full subcategory $\cat{W}' \mono \cat{W}$ on the objects $w$ such that $\cat{W}(w, B \otimes -) \in \Mod_B \to \Spa$ is corepresentable.
    Clearly, $\cat{W}'$ contains the free $W$-algebras (the image of the functor $\V \to \cat{W}$) and is closed under geometric realizations.
    Hence, $\cat{W}' = \cat{W}$ and the left adjoint exists.

    Automatically, this left adjoint preserves colimits.
    To show that it is given by tensoring with some lift $B^{\vee} \inset \cat{W}$, it suffices to check that this adjoint is a $\cat{W}$-module map.
    This follows from the argument in \cite[Cor. $3.8$]{Ram24a}.
    As a left adjoint to a module map, it is automatically an oplax-$\cat{W}$ map.
    Because $\cat{W}$ is generated under colimits by its unit $W$, it is in particular generated under colimits by dualizable objects.
    The ``trace map'' construction of \cite[Prop. $3.4$]{Ram24a} then shows that the oplax structure maps are equivalences.
\end{proof}

In particular, we immediately get the following corollary in the category of spectra.

\begin{corollary}[Dualizability for Iterated Bimodules in Spectra]\label{cor:existence_dual_spectra}
    Let $M$ be a $\mathbb{R}^n_{\overline{n}}$-algebra in $\Spe$, seen as an $n$-morphism in $\Mor_n(\Spe)$.
    Using the notations of \autoref{constr:canonical_action_units} above,
    \begin{enumerate}
        \item $M$ is a left adjoint in $\Mor_n(\V)$ if and only if it is compact as a module over the induced $\mathbb{R}_{> 0}$-algebra $B$, if and only if it is perfect as a module over the induced $\mathbb{R}_{> 0}$-algebra $B$,
        \item $M$ is a right adjoint in $\Mor_n(\V)$ if and only if it is compact as a module over the induced $\mathbb{R}_{< 0}$-algebra $A$, if and only if it is perfect as a module over the induced $\mathbb{R}_{< 0}$-algebra $A$.
    \end{enumerate}
\end{corollary}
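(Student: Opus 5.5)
The plan is to reduce everything to the case $n=1$ using \autoref{prop:existence_dual}, and then to use standard facts about module categories over ring spectra. I only treat the first statement. The second one follows either by the same argument with $A$ in place of $B$, or by applying the ``$n$-op'' symmetry $x_n \mapsto -x_n$, as in the proof of \autoref{prop:existence_dual}.

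\textbf{Step 1: reduction to $n=1$.} By \autoref{prop:existence_dual}, $M$ is a left adjoint in $\Mor_n(\Spe)$ if and only if the induced $\mathbb{R}_{\geq 0}$-algebra $M \curvearrowleft B$, viewed as a $1$-morphism $M \in \mathbbm{1} \to B$ in $\Mor_1(\Spe)$, is a left adjoint. The $\mathbb{R}_{\geq 0}$-algebra structure consists of the $\mathcal{E}_1$-algebra $B$ carried by the open stratum $\mathbb{R}_{>0}$, together with $M$ carried by the closed point as a one-sided $B$-module. So the question becomes when a $B$-module $M$ gives a left adjoint $\mathbbm{1} \to B$ in the Morita category of $\Spe$.

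\textbf{Step 2: left adjoint iff compact.} I would use the standard description of adjoints in $\Mor_1$, which is the $n=1$ content of \cite[Prop. $4.6.2.13$]{Lur17}. The bimodule $M$ induces the colimit-preserving functor $-\otimes M \in \Spe \to \Mod_B$, and $M$ is a left adjoint in $\Mor_1(\Spe)$ exactly when this functor has a right adjoint which again preserves colimits and is $\Spe$-linear. Here the right adjoint always exists and is $\Map_B(M,-)$, with values in $\Spe$. Linearity over $\Spe$ is automatic, because $\Spe$ is generated under colimits by its unit, so by the argument of \cite[Cor. $3.8$]{Ram24a} already used above the lax structure is an equivalence. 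It therefore remains to check that $\Map_B(M,-)$ preserves colimits. Since $\Mod_B$ is stable, preserving finite colimits is automatic, so the condition reduces to preserving filtered colimits, which is precisely compactness of $M$ in $\Mod_B$. For the converse, if $M$ is compact then $\Map_B(M,-)$ is colimit-preserving and exact. It is then given by tensoring with the dual bimodule $M^\vee = \Map_B(M,B)$, and this provides the right adjoint $1$-morphism.

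\textbf{Step 3: compact iff perfect.} For an $\mathcal{E}_1$-ring spectrum $B$, the compact objects of $\Mod_B$ are exactly the perfect modules, i.e.\ the thick subcategory generated by $B$ \cite[Prop. $7.2.4.2$]{Lur17}. This gives the last equivalence.

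The step I expect to need the most care is Step 2. One has to match the left/right conventions of \autoref{warning:order} and \autoref{warning:adjointable}, to check that it is compactness over $B$ that appears (and not dualizability over $\Spe$), and to make sure that ``compact'' is taken in $\Mod_B$ for the correct side of the action.
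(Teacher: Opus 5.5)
Your proposal is correct and follows the same route as the paper. You first reduce to the $n=1$ case via \autoref{prop:existence_dual}, then apply the standard characterization of dualizable modules over ring spectra. The paper cites \cite[$7.2.4.4$]{Lur17} for that characterization, which packages your Steps 2 and 3 (left adjoint $\iff$ compact $\iff$ perfect) into one statement. Your argument via the right adjoint $\mathrm{Map}_B(M,-)$ is an unpacking of that citation rather than a different argument.
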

\begin{proof}
    This is a direct consequence of \cite[$7.2.4.4$]{Lur17} which characterizes dualizable modules in $\Spe$.
\end{proof}

\subsection{Centers for Dualizable Algebras}\label{subsec:deligne}

We recall here the content of \cite[Sec. $5.3.1$]{Lur17} and detail how it applies to our setting.

\begin{lemma}[Units as Maps to Centers]\label{lem:unit_center}
    Let $(\cat{A},\cat{M}) \inset \Alg{\mathbb{R}_{\geq 0}}\left(\Cat{1}^{\geom}\right)$ and $(A,M)$ a $\uDisk_{/\mathbb{R}_{\geq 0}}$-algebra in $(\cat{A},\cat{M})$.
    Suppose that we can extend the diagram below with the following dashed right adjoints.
\begin{equation}\label{eq:center_lift}\begin{tikzcd}
	&& {\Mod_A(\cat{A})} \\
	\\
	\\
	{\cat{A}} &&&& {\cat{M}}
	\arrow[""{name=0, anchor=center, inner sep=0}, curve={height=-6pt}, from=1-3, to=4-1]
	\arrow[""{name=1, anchor=center, inner sep=0}, "{-\otimes_A M}", curve={height=-6pt}, from=1-3, to=4-5]
	\arrow[""{name=2, anchor=center, inner sep=0}, "{-\otimes A}", curve={height=-6pt}, from=4-1, to=1-3]
	\arrow[""{name=3, anchor=center, inner sep=0}, "{-\otimes M}", curve={height=-6pt}, from=4-1, to=4-5]
	\arrow[""{name=4, anchor=center, inner sep=0}, "{\hom_{\cat{M}}(M,-)}", curve={height=-6pt}, dashed, from=4-5, to=1-3]
	\arrow[""{name=5, anchor=center, inner sep=0}, "{\hom_{\cat{M}}(M,-)}", curve={height=-6pt}, dashed, from=4-5, to=4-1]
	\arrow["\dashv"{anchor=center, rotate=-154}, draw=none, from=1, to=4]
	\arrow["\dashv"{anchor=center, rotate=-90}, draw=none, from=3, to=5]
	\arrow["\dashv"{anchor=center, rotate=-27}, draw=none, from=2, to=0]
\end{tikzcd}\end{equation}
    Then, when evaluating at $\mathbbm{1}_{\cat{A}}$, the monad of the bottom adjunction gives the center of $M$ (seen as an object in $\cat{A}$) and the induced map of monads gives the universal $A \to \hom_{\cat{M}}(M,M)$ corresponding to the action of $A \curvearrowright M$ (seen as a morphism in $\cat{A}$).
\end{lemma}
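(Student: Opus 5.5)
The plan is to reduce everything to the endomorphism-object formalism of \cite[Sec. $4.7.1$ and $5.3.1$]{Lur17}, identifying both monads explicitly on $\mathbbm{1}_{\cat{A}}$.

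\textbf{Step 1: the bottom monad.} Since $-\otimes M \dashv \hom_{\cat{M}}(M,-)$, the monad $\hom_{\cat{M}}(M, -\otimes M)$ sends $\mathbbm{1}_{\cat{A}}$ to $\hom_{\cat{M}}(M,M)$. This object corepresents $a \mapsto \cat{M}(a\otimes M, M)$. It is therefore the endomorphism object of $M$ for the left action $\cat{A}\curvearrowright\cat{M}$, which by \cite[Sec. $4.7.1$]{Lur17} carries a universal algebra structure: it is terminal among algebras acting on $M$. In the setting of \cite[Sec. $5.3.1$]{Lur17} (taking $\cat{M}$ with its $\cat{A}$-action), this is precisely the center $\mathfrak{Z}(M)$. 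We also check that the monad multiplication, evaluated at the unit, agrees with the composition-induced algebra structure on the endomorphism object. This is standard: both structures come from the same endomorphism monad $\End(-\otimes M)$ restricted along $\mathbbm{1}_{\cat{A}}$.

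\textbf{Step 2: the top adjunction and the comparison.} The composite $\cat{A}\to\Mod_A(\cat{A})\to\cat{M}$ is $(-\otimes A)\otimes_A M \simeq -\otimes M$, so right adjoints compose accordingly and $\hom_{\cat{M}}(M,-)$ factors through the forgetful functor $\Mod_A(\cat{A})\to\cat{A}$. This factorization induces a map of monads $T_A = -\otimes A \to T_M = \hom_{\cat{M}}(M,-\otimes M)$. Concretely, it is the composite $T_A \to T_A\circ(\text{forget}\circ\hom_{\cat M}(M,-\otimes_A M))\circ(-\otimes A)\simeq T_M$, built from the unit of the top adjunction. Evaluated at $\mathbbm{1}_{\cat{A}}$, it is a map of algebras $A\to\hom_{\cat{M}}(M,M)$. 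Its adjoint $A\otimes M\to M$ is obtained by unwinding the units: it is $A\otimes M\to A\otimes_A M\simeq M$, which is the action map $A\curvearrowright M$ given by the $\uDisk_{/\mathbb{R}_{\geq0}}$-algebra structure.

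\textbf{Step 3: universality.} We then use the universal property from Step 1. Algebra maps $A\to\hom_{\cat M}(M,M)$ are equivalent to $A$-module structures on $M$ compatible with the action \cite[Thm. $4.7.1.34$]{Lur17}, so there is a unique such map inducing the given action. Since the monad morphism of Step 2 is a map of algebras whose adjoint is the action, it must be this universal map.

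\textbf{Main obstacle.} The hard part is the coherence claimed in Step 2: that the monad map is an algebra map and not merely a map of underlying objects, and that its adjoint is the action with all higher coherences. I would handle this by working with the $\Mod_A(\cat{A})$-module structure. The lift $\Mod_A(\cat{A})\to\cat{M}$ is $\cat{A}$-linear, and the maps between endomorphism monads of $\cat{A}$-linear functors are computed by \cite[Sec. $4.7.3$]{Lur17}. This would reduce the problem to the $\cat{A}$-linear Barr--Beck comparison.
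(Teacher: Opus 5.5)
This is essentially the paper's argument: $\hom_{\cat{M}}(M,M)$ is the endomorphism object of $M$, hence its center by \cite[Sec. $5.3.1$]{Lur17}, and the map of monads is identified through its adjoint $A\otimes M\to M$, which unwinds to the action. (Your displayed composite in Step 2 has a spurious leading $T_A\circ$; the monad map is just the unit of the top adjunction, whiskered by $-\otimes A$ on one side and the forgetful functor on the other.) The lemma only concerns $A\to\hom_{\cat{M}}(M,M)$ as a morphism in $\cat{A}$, so the paper avoids your Step 3 and the coherence obstacle entirely: the universal property of the morphism object already determines such a map by its adjoint $A\otimes M\to M$, which you computed in Step 2.
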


Note that we state and use this claim only regarding the map $A \to \hom_{\cat{M}}(M,M)$ as a map in $\cat{A}$.
It is also true as a map in $\EAlg{1}(\cat{A})$, using the induced $\mathcal{E}_1$-algebra structure on the map of monads, but we will not need this fact here.

\begin{proof}
    By definition, $\hom(M,M)$ is an endomorphism object for $M$ \cite[Def. $4.2.1.28$]{Lur17}, and hence a center for $M$ \cite[Lem. $5.3.1.11$]{Lur17}.
    By the universal property of morphism objects, to identify maps $A \to \hom_{\cat{M}}(M,M)$, it suffices to identify the induced maps $A \otimes M \to M$.
    It is then given by construction that the map of monads induces the correct action map on $M$.
\end{proof}

\begin{remark}[Lift of the Action of $A$ on the Dual]
    One can actually prove the top dashed arrow exists if and only if the bottom dashed arrow exists.
    This is essentially contained in \cite[Sec. $5.3.1$]{Lur17}.
    One way to see this is to observe that the left hand side adjunction is monadic and following the ideas of \cite[Sec. $4.7.3$]{Lur17}, we have:
    a (left) action of $A$ on $M$ gives a (right) action of the monad $- \otimes A$ on $- \otimes M$,
    which in turn is equivalent to a (left) action of the monad $- \otimes A$ on $\hom_{\cat{M}}(M,-)$,
    which in turn is equivalent to the existence of the dotted lift.
\end{remark}

To complete the picture, we propose the following approach to re-prove the results of \cite[Sec. $5.3.1$]{Lur17}.
Now suppose $(\cat{A},\cat{M}) \inset \Alg{\mathbb{R}_{\leq 0} \times \mathbb{R}^n}(\Cat{1}^{\geom})$ and $(A,M)$ a $\uDisk_{/\mathbb{R}_{\leq 0} \times \mathbb{R}^n}$-algebra in $(\cat{A},\cat{M})$.
Consider a similar diagram, but replacing $\cat{M}$ with $\Mod^{\multisided}_M(\cat{M})$, and assume again the existence of the dashed right adjoints.
All the left adjoint functors are $\mathcal{E}_n$-monoidal.
Then, as can also be seen by \cite[Cor. $C$]{HHLN24}, all the right adjoint functors are canonically lax-$\mathcal{E}_n$-monoidal and the diagram induces
\[\begin{tikzcd}
	&& {\EAlg{n}(\Mod_A(\cat{A}))} \\
	\\
	\\
	{\EAlg{n}(\cat{A})} &&&& {\EAlg{n}(\Mod^{\multisided}_M(\cat{M}))} & {\EAlg{n}(\cat{M})_{M \backslash}}
	\arrow[""{name=0, anchor=center, inner sep=0}, curve={height=-6pt}, from=1-3, to=4-1]
	\arrow[""{name=1, anchor=center, inner sep=0}, "{-\otimes_A M}", curve={height=-6pt}, from=1-3, to=4-5]
	\arrow[""{name=2, anchor=center, inner sep=0}, "{-\otimes A}", curve={height=-6pt}, from=4-1, to=1-3]
	\arrow[""{name=3, anchor=center, inner sep=0}, "{-\otimes M}", curve={height=-6pt}, from=4-1, to=4-5]
	\arrow[""{name=4, anchor=center, inner sep=0}, "{\hom_{M}^{\multisided}(M,-)}", curve={height=-6pt}, dashed, from=4-5, to=1-3]
	\arrow[""{name=5, anchor=center, inner sep=0}, "{\hom_{M}^{\multisided}(M,-)}", curve={height=-6pt}, from=4-5, to=4-1]
	\arrow["\sim", no head, from=4-5, to=4-6]
	\arrow["\dashv"{anchor=center, rotate=-149}, draw=none, from=1, to=4]
	\arrow["\dashv"{anchor=center, rotate=-90}, draw=none, from=3, to=5]
	\arrow["\dashv"{anchor=center, rotate=-30}, draw=none, from=2, to=0]
\end{tikzcd}\]
Informally, the induced monad of the bottom adjunction, evaluated at the identity $\mathbbm{1}_{\cat{A}}$, upgrades to $\EAlg{1}(\EAlg{n}(\cat{A})) \simeq \EAlg{n+1}(\cat{A})$ and can be identified with the $\mathcal{E}_n$-center of $M$.
The induced map of monad, evaluated at the identity $\mathbbm{1}_{\cat{A}}$, can be identified with the universal $\mathcal{E}_{n+1}$-map to the center $A \to \hom^{\multisided}_M(M,M)$.
Thankfully, we do not need here to access any of the $\mathcal{E}_{n+1}$-monoidal structures of these maps, so the following result suffices for our purposes.

\begin{lemma}[Units as Maps to $\mathcal{E}_n$-Centers]\label{lem:unit_En_center}
    As above, let $(\cat{A},\cat{M}) \inset \Alg{\mathbb{R}_{\leq 0} \times \mathbb{R}^n}(\Cat{1}^{\geom})$ and $(A,M)$ a $\uDisk_{/\mathbb{R}_{\leq 0} \times \mathbb{R}^n}$-algebra in $(\cat{A},\cat{M})$.
    Suppose that we can extend the diagram below with the following dashed right adjoints.
\begin{equation}\label{eq:En_center_lift}\begin{tikzcd}
	&& {\Mod_A(\cat{A})} \\
	\\
	\\
	{\cat{A}} &&&& {\Mod^{\multisided}_M(\cat{M})}
	\arrow[""{name=0, anchor=center, inner sep=0}, curve={height=-6pt}, from=1-3, to=4-1]
	\arrow[""{name=1, anchor=center, inner sep=0}, "{-\otimes_A M}", curve={height=-6pt}, from=1-3, to=4-5]
	\arrow[""{name=2, anchor=center, inner sep=0}, "{-\otimes A}", curve={height=-6pt}, from=4-1, to=1-3]
	\arrow[""{name=3, anchor=center, inner sep=0}, "{-\otimes M}", curve={height=-6pt}, from=4-1, to=4-5]
	\arrow[""{name=4, anchor=center, inner sep=0}, "{\hom_{M}^{\multisided}(M,-)}", curve={height=-6pt}, dashed, from=4-5, to=1-3]
	\arrow[""{name=5, anchor=center, inner sep=0}, "{\hom_{M}^{\multisided}(M,-)}", curve={height=-6pt}, dashed, from=4-5, to=4-1]
	\arrow["\dashv"{anchor=center, rotate=-151}, draw=none, from=1, to=4]
	\arrow["\dashv"{anchor=center, rotate=-90}, draw=none, from=3, to=5]
	\arrow["\dashv"{anchor=center, rotate=-27}, draw=none, from=2, to=0]
\end{tikzcd}\end{equation}
    Then, when evaluating at $\mathbbm{1}_{\cat{A}}$, the monad of the bottom adjunction gives the $\mathcal{E}_n$-center of $M$ (seen as an object in $\cat{A}$) and the induced map of monads gives the universal $A \to \hom_{\cat{M}}(M,M)$ corresponding to the action of $A \curvearrowright M$ (seen as a morphism in $\cat{A}$). 
\end{lemma}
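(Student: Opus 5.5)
The plan is to reduce the statement to \autoref{lem:unit_center}, applied in the symmetric monoidal category $\Mod^{\multisided}_M(\cat{M})$ of $\mathcal{E}_n$-$M$-modules. By the equivalence $\Mod^{\multisided}_M(\cat{M}) \simeq \Mod_{\int_{S^{n-1}\times\mathbb{R}} M}(\cat{M})$ (Lurie's description of $\mathcal{E}_n$-modules as left modules over the factorization homology of the sphere), the question becomes a statement about ordinary left modules over an $\mathcal{E}_1$-algebra.

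First I would use the homological maps of \autoref{subsec:homological} to repackage the data. The $\uDisk_{/\mathbb{R}_{\leq 0}\times\mathbb{R}^n}$-algebra $(A,M)$ is pushed forward along the constructible bundle $\mathbb{R}_{\leq 0}\times\mathbb{R}^n \epi \mathbb{R}_{\leq 0}$, given by projection on the first coordinate. This produces a $\uDisk_{/\mathbb{R}_{\leq 0}}$-algebra in the pair $(\cat{A}, \Mod^{\multisided}_M(\cat{M}))$, whose module part is $M$ regarded as the unit $\mathcal{E}_n$-$M$-module and whose algebra part is the $\mathcal{E}_1$-algebra $A$ acting on it. By \autoref{lem:commutative_diagrams}, forgetting the extra $\mathbb{R}^n$-directions commutes with this pushforward. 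Hence the action $A\curvearrowright M$ obtained this way agrees with the canonical one, and the bottom functor $-\otimes M \colon \cat{A}\to\Mod^{\multisided}_M(\cat{M})$ is the free-module functor composed with the unit.

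Second, I would apply \autoref{lem:unit_center} verbatim, with $\cat{M}$ replaced by $\Mod^{\multisided}_M(\cat{M})$. Evaluated at $\mathbbm{1}_{\cat{A}}$, the monad of the bottom adjunction is $\hom^{\multisided}_M(M,M)$, and this object is an endomorphism object for $M$ in $\Mod^{\multisided}_M(\cat{M})$ relative to $\cat{A}$. The map of monads then identifies with the map $A\to\hom^{\multisided}_M(M,M)$ that is adjoint to the action $A\otimes M\to M$ in $\Mod^{\multisided}_M(\cat{M})$.

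The main obstacle is identifying this endomorphism object with the $\mathcal{E}_n$-center. I would invoke \cite[Sec. $5.3.1$]{Lur17}, specifically the identification of the $\mathcal{E}_n$-center (the centralizer of $\mathrm{id}_M$) with the endomorphism object of $M$ in $\Mod^{\multisided}_M$ (Lurie's Thm.~5.3.1.30 and Lem.~5.3.1.11, adapted from presentable categories to geometric realizations). Since the lemma only concerns the underlying object and map in $\cat{A}$, no $\mathcal{E}_{n+1}$-coherences need to be tracked. It remains to check that Lurie's universal map agrees with the one coming from the action. This check reduces, via the universal property of morphism objects, to comparing the two maps $A\otimes M\to M$, and both are induced by the same $\uDisk$-algebra structure.
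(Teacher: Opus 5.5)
Your proposal is correct and follows the paper's argument: the paper also applies \autoref{lem:unit_center} with $\cat{M}$ replaced by $\Mod^{\multisided}_M(\cat{M})$, then uses the proof of \cite[Thm. $5.3.1.30$]{Lur17} to identify the endomorphism object $\hom^{\multisided}_M(M,M)$ with the $\mathcal{E}_n$-center. Your preliminary step (pushing forward along the projection, and the detour through $\int_{S^{n-1}\times\mathbb{R}} M$) is not needed, because the hypothesis that $-\otimes_A M$ lands in $\Mod^{\multisided}_M(\cat{M})$ already supplies the $\cat{A}$-tensored setting that \autoref{lem:unit_center} requires; note also that $\Mod^{\multisided}_M(\cat{M})$ only needs to be tensored over $\cat{A}$, not symmetric monoidal.
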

\begin{proof}
    This is an immediate consequence of \autoref{lem:unit_center} and of the proof of the main Theorem of Lurie's section on centers \cite[Thm. $5.3.1.30$]{Lur17}.
\end{proof}

In our case, the $- \otimes_A M$ functor on the top right side of the diagram can be defined explicitly with factorization homology as follows.

\begin{remark}[$\cat{E}_n$-``Bimodule'' Version]\label{rem:en_bimodule_version}
    Let $(A,M,B) \inset \Alg{\mathbb{R}^{n+1}_{\overline{1}}}(\V)$.
    We can consider $(\cat{A}, \cat{M}) = (\V, \Mod_B(\V))$ above, which canonically inherits the correct algebra structure via the collapsing of the $x_1 \geq 0$ region $\mathbb{R}^{n+1}_{\overline{1}} \epi \mathbb{R}_{\leq 0} \times \mathbb{R}^{n}$.
    The functor $\Mod_A(\V) \xto[-\otimes_A M] \Mod_M^{\multisided}(\Mod_B(\V))$ above is then given by the bimodule action induced by the homological maps
\[\begin{tikzcd}
	{\mathbb{R}^{n+1}_{\overline{1}}} & {\left(\mathbb{R}_{\leq 0} \times \{0\}^n\right) \bigsqcup_{\{0\}^{n+1}} \left(\mathbb{R}_{\geq 0} \times \mathbb{R}^{n}\right)} & {\mathbb{R}_{\overline{1}}}
	\arrow["{\textnormal{cl.}}"', hook', from=1-2, to=1-1]
	\arrow[two heads, from=1-2, to=1-3]
\end{tikzcd}\]
    where the second map is the constructible bundle induced by taking the radius in the $x_{1} > 0$ region.
\end{remark}

Using this, we can now prove the main result of this subsection, which we formulate with the following constructon.

\begin{construction}[Actions in \autoref{prop:identifying_units}]\label{constr:canonical_action_units}
    Let $0 \leq k < n$ and $a, b \geq 0$ with $a + b = n-k = d$.
    Let $M$ be a $\mathbb{R}^n_{\overline{k}}$-algebra in $\V$.
    We have the homological maps
\[\begin{tikzcd}
	{\mathbb{R}^{n}_{\overline{k}}} \\
	{(\{0\}^{a+1} \times \mathbb{R}^b \subset \{0\} \times \mathbb{R}^d \subset \mathbb{R}_{\geq 0} \times \mathbb{R}^d) \bigsqcup_{\{0\}^{a+1} \times \mathbb{R}^b} (\mathbb{R}_{\leq 0} \times \{0\}^a \times \mathbb{R}^b)} & {\mathbb{R}^{b+1}_{\overline{1}}}
	\arrow["{\textnormal{cl}}", hook', from=2-1, to=1-1]
	\arrow[two heads, from=2-1, to=2-2]
\end{tikzcd}\]
    where the vertical inclusion is induced by the inclusion of the last $d+1$ coordinates $\mathbb{R}^{d+1}_{\overline{1}} \xmono[\textnormal{cl.}] \mathbb{R}^{n}_{\overline{k}}$. 
    If $k > 0$, we denote $A$ (resp. $B$) the induced $\mathbb{R}_{<0}$-algebra (resp. $\mathbb{R}_{>0}$-algebra) described in \autoref{constr:actions_existence_dual}.
    If $k = 0$, we simply denote $A = B = \mathbbm{1}_{\V}$ the unit of $\V$.
    By \autoref{lem:lifting_homology}, these homological maps induce a $\mathbb{R}^{b+1}_{\overline{1}}$-algebra $\int_{(S^{a-1}, D^a)} (M, A) \EActright{b} M \EActleft{b} B$.
    Using \autoref{rem:en_bimodule_version} and \autoref{lem:unit_En_center}, we then get a \textbf{universal $\cat{E}_{b+1}$-map}
    \[
        \int_{(S^{a-1}, D^a)} (M, A) \longto \HC{b}^B(M)
    \]
    where $\HC{b}^B(M)$ is the $\mathcal{E}_b$-center with $(\cat{A}, \cat{M}) =(\V, \Mod_B(\V))$.
\end{construction}

\begin{proposition}[Identifying Units with Universal Maps to Centers]\label{prop:identifying_units}
    Let $n,k,d,a,b,M,A,B$ be as described in \autoref{constr:canonical_action_units} above.
    Let $\sigma = \sigma_{k+d-1} \ldots \sigma_{k} \inset \{\epsilon, \eta\}^d$ be a $d$-duality datum such that $a = |\{ k \leq i < k+d \mid \sigma_i = \epsilon\}|$.
    Suppose that $\sigma(M)$, seen as a $n$-morphism in $\Mor_n(\V)$, has a right adjoint $\sigma(M)^{\vee}$.
    Then, the image in $\V$ of the unit of the adjunction can be identified with the image of $\V$ of the universal $\mathcal{E}_{b+1}$-map
    \[
        \int_{(S^{a-1}, D^a)} (M, A) \longto \HC{b}^B(M)
    \]
    described above.
\end{proposition}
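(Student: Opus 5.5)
We first reduce to a top-level statement in $\Mor_n(\V)$. By \autoref{def:full_pinch}, $\sigma(M)$ is the $\mathbb{R}^n_{\overline{n}}$-algebra obtained by pushing $M$ forward along $\mathbb{R}^n_{\overline{k}} \simeq \mathbb{R}^n_{\overline{k},\sigma} \dashleftarrow \mathbb{R}^n_{\overline{k},\overline{\sigma}} \epi \mathbb{R}^n_{\overline{n}}$. Once we know the domain and codomain algebras of $\sigma(M)$ (\autoref{constr:actions_existence_dual}), \autoref{prop:existence_dual} tells us how to compute its right adjoint. Namely, $\sigma(M)$ is a left adjoint exactly when $\sigma(M) \curvearrowleft B_\sigma$ is a left adjoint in $\Mor_1(\V)$, and the right adjoint is the dual $\hom_{B_\sigma}(\sigma(M), -)$. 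The image in $\V$ of the unit is then the map $A_\sigma \to \hom_{B_\sigma}(\sigma(M),\sigma(M))$ induced by the bimodule structure, which is exactly the situation of \autoref{lem:unit_center}. The plan is therefore:
\begin{enumerate}
\item identify the $\mathbb{R}_{\leq 0}$- and $\mathbb{R}_{\geq 0}$-algebras of $\sigma(M)$ geometrically;
\item match them, together with their actions, with the $\mathbb{R}^{b+1}_{\overline{1}}$-algebra of \autoref{constr:canonical_action_units};
\item invoke \autoref{lem:unit_En_center} through \autoref{rem:en_bimodule_version}.
\end{enumerate}

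\textbf{Geometric identification.} The closed inclusion $\mathbb{R}_{\leq 0} \xmono[\textnormal{cl}] \mathbb{R}^n_{\overline{n}}$ of the last coordinate pulls back along the full pinch to a closed union of strata of $\mathbb{R}^n_{\overline{k},\overline{\sigma}}$. We track which strata sit over the half-line $x_n<0$ (respectively $x_n>0$) through each pinch $\pi_j$. The coordinates with $\sigma_i = \epsilon$ give a handle region whose preimage is a $D^a$ glued to the $A$-region along $S^{a-1}$. The coordinates with $\sigma_i=\eta$ remain free directions and contribute the $\mathbb{R}^b$ factor. Pushing forward along the radial constructible bundle, as in \autoref{rem:en_bimodule_version}, turns the domain algebra into $\int_{(S^{a-1},D^a)}(M,A)$ with its $\mathcal{E}_{b+1}$-structure. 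The same bookkeeping shows that the codomain is $B$, and that the middle datum is $M$ viewed as an $\mathcal{E}_b$-multisided $B$-module.

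We then write down one diagram of homological maps. On one side is the pinch/refinement span followed by the closed inclusions and radial bundles of \autoref{constr:actions_existence_dual}; on the other is the closed inclusion and bundle of \autoref{constr:canonical_action_units}. The two composites agree as stratified maps up to the closed-inclusion/bundle squares treated in \autoref{rem:vector_fields}. By \autoref{lem:commutative_diagrams}, the induced functors on algebras agree. So $(A_\sigma, \sigma(M), B_\sigma)$ is identified with $\int_{(S^{a-1},D^a)}(M,A) \EActright{b} M \EActleft{b} B$, compatibly with the actions.

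\textbf{Conclusion and main obstacle.} With $(\cat{A},\cat{M}) = (\V,\Mod_B(\V))$, the existence of the right adjoint $\sigma(M)^\vee$ gives the dashed adjoints in \eqref{eq:En_center_lift}, via the argument in the proof of \autoref{prop:existence_dual} (the left adjoint is a $\cat{W}$-module functor). \autoref{lem:unit_En_center} then identifies the unit evaluated at $\mathbbm{1}_\V$ with the universal map to the $\mathcal{E}_b$-center $\HC{b}^B(M)$. One point still needs care: the unit of $\sigma(M) \dashv \sigma(M)^\vee$ in $\Mor_n(\V)$ corresponds, after forgetting to $\V$, to the unit in $\Mor_1(\Mod_W(\V))$. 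This uses the iterated description \cite[Cor. $7.6$]{Hau23} as in \autoref{prop:existence_dual}.

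The main obstacle is the first step: verifying carefully that the closed strata of $\mathbb{R}^n_{\overline{k},\overline{\sigma}}$ over the negative half-line, after the successive pinches, really form $(S^{a-1},D^a)$ times $\mathbb{R}^b$ with the correct $A$-collar, independently of the order of the letters in $\sigma$. I would first treat $d=1$ (both $\sigma=\eta$ and $\sigma=\epsilon$) by direct inspection. I would then induct on $d$ using \autoref{lem:associativity}, checking that each new letter either adds a free $\mathbb{R}$ direction ($\eta$) or thickens the disk by one dimension ($\epsilon$).
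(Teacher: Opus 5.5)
Your categorical reduction matches the paper's. You pass from the unit in $\Mor_n(\V)$ to the unit in $\Mor_1(\cat{W})$ using the lifting argument of \autoref{prop:existence_dual}, read it as a map of monads, and conclude with \autoref{lem:unit_En_center} once $-\otimes_{A_\sigma}\sigma(M)$ is matched with the functor of \autoref{rem:en_bimodule_version}.

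The gap is in the geometric identification, which carries the real content, and there your picture is wrong. The $\eta$-directions do not appear over the negative half-line, and $B_\sigma\neq B$ as soon as $b>0$. Take $k=n-1$, $\sigma=\eta$, so $a=0$ and $b=1$. On the locus $x_1=\cdots=x_{n-2}=0$ the pinch is $x_{n-1}\mapsto x_{n-1}\rho(x_n-x_{n-1}^2)$. The preimage of the last-coordinate line is therefore $\{x_{n-1}=0\}\cup\{x_n\geq x_{n-1}^2\}$.
\begin{itemize}
\item Over $x_n<0$ this is a single point of the $A$-region, so $A_\sigma=A$.
\item Over $x_n>0$ it is the interval $|x_{n-1}|\leq\sqrt{x_n}$ with $M$ at the endpoints, so $B_\sigma=\int_{(S^0,D^1)}(M,B)$. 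This is as it must be for $\eta(M)\colon\mathrm{id}\Rightarrow M^\circ\circ M$.
\end{itemize}
In general your description of the $\epsilon$-side is right. But the $\eta$-directions fill out the \emph{codomain} side, and the $\epsilon$-coordinates are forced to vanish there (and vice versa). Hence $A_\sigma=\int_{(S^{a-1},D^a)}(M,A)$ only as an $\mathcal{E}_1$-algebra, and $B_\sigma=\int_{(S^{b-1},D^b)}(M,B)$. So the statement you isolate as the main obstacle is false as formulated. Likewise, ``$(A_\sigma,\sigma(M),B_\sigma)$ is identified with the $\mathbb{R}^{b+1}_{\overline{1}}$-algebra'' compares algebras over different spaces. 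This is not cosmetic: with $B_\sigma=B$ you would land in $\hom_B(M,M)$, not in $\HC{b}^B(M)$.

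What is missing is where the $\mathcal{E}_{b+1}$-structure and the multisided module structure come from, since neither is visible on the preimage of the line. The paper works with the space $X$ lying over the line. This is an $(a+1)$-dimensional half-space in the $\epsilon$-directions glued at the origin to a $(b+1)$-dimensional half-space in the $\eta$-directions, embedded in $\mathbb{R}^n_{\overline{k},\overline{\sigma}}$ by a coordinate permutation $(i)$. The paper then places $X$ inside the thicker space $Y\subset\mathbb{R}^n_{\overline{k}}$ of \autoref{constr:canonical_action_units}, where the $\eta$-directions are adjoined on the domain side.

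The comparison goes through the square formed by $X\subset Y$, $Y\to\mathbb{R}^{b+1}_{\overline{1}}$, and $X\to Z\subset\mathbb{R}^{b+1}_{\overline{1}}$, where $Z=(\mathbb{R}_{\leq 0}\times\{0\}^b)\cup(\mathbb{R}_{\geq 0}\times\mathbb{R}^b)$. This is the kind of square \autoref{rem:vector_fields} handles. The radial collapse $Z\to\mathbb{R}_{\overline{1}}$ of \autoref{rem:en_bimodule_version} is precisely what produces $B_\sigma$. It therefore gives $\Mod_{B_\sigma}\simeq\Mod^{\multisided}_M(\Mod_B(\V))$ with $\sigma(M)\mapsto M$.

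Independence of the order of the letters then comes for free. The same permutation, with a sign flip, gives an automorphism $(ii)$ of $\mathbb{R}^n_{\overline{k}}$ that is isotopic to the identity. So no induction on $d$ via \autoref{lem:associativity} is needed. Such an induction would in any case force you to re-identify the (co)domains of the intermediate $\sigma_k(M)$, which are identities and composites.
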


Note that the unit of the adjunction described above can also be seen as a $\mathcal{E}_{b+1}$-map by generalizing the idea that any unit is a $\mathcal{E}_{1}$-map.
Our statement however only concerns itself with the underlying map in $\V$, stripped of any algebraic structures.
For the purpose of proving \autoref{thm:invertibility_criterion}, we only need to understand when these maps are equivalences.
Because the forgetful functor $\EAlg{b+1}(\V) \to \V$ is conservative, this result is sufficient.

\begin{proof}
    Let $\overline{A}$ and $\overline{B}$ be the induced $\mathbb{R}_{\leq 0}$ and $\mathbb{R}_{\geq 0}$-algebras, as in \autoref{constr:actions_existence_dual}, applied to $\sigma(M)$.
    One can dualize the diagram \eqref{eq:W_lift} and use it to extend the diagram \eqref{eq:center_lift} to the following:
\[\begin{tikzcd}
	{\Mod_{\overline{A}}} \\
	\\
	\\
	{\cat{W}} &&&& {\Mod_{\overline{B}}} \\
	\\
	\\
	\V
	\arrow[""{name=0, anchor=center, inner sep=0}, curve={height=-6pt}, from=1-1, to=4-1]
	\arrow[""{name=1, anchor=center, inner sep=0}, "{-\otimes_{\overline{A}} M}", curve={height=-6pt}, from=1-1, to=4-5]
	\arrow[""{name=2, anchor=center, inner sep=0}, "{-\otimes \overline{A}}", curve={height=-6pt}, from=4-1, to=1-1]
	\arrow[""{name=3, anchor=center, inner sep=0}, "{-\otimes M}", curve={height=-6pt}, from=4-1, to=4-5]
	\arrow[""{name=4, anchor=center, inner sep=0}, curve={height=-6pt}, from=4-1, to=7-1]
	\arrow[""{name=5, anchor=center, inner sep=0}, "{\hom_{\Mod_{\overline{B}}}(M,-)}", curve={height=-6pt}, from=4-5, to=1-1]
	\arrow[""{name=6, anchor=center, inner sep=0}, "{\hom_{\Mod_{\overline{B}}}(M,-)}", curve={height=-6pt}, from=4-5, to=4-1]
	\arrow[""{name=7, anchor=center, inner sep=0}, "{\hom_{\Mod_{\overline{B}}}(M,-)}", curve={height=-6pt}, from=4-5, to=7-1]
	\arrow[""{name=8, anchor=center, inner sep=0}, "{-\otimes W}", curve={height=-6pt}, from=7-1, to=4-1]
	\arrow[""{name=9, anchor=center, inner sep=0}, "{-\otimes M}", curve={height=-6pt}, from=7-1, to=4-5]
	\arrow["\dashv"{anchor=center, rotate=-139}, draw=none, from=1, to=5]
	\arrow["\dashv"{anchor=center}, draw=none, from=2, to=0]
	\arrow["\dashv"{anchor=center, rotate=-90}, draw=none, from=3, to=6]
	\arrow["\dashv"{anchor=center}, draw=none, from=8, to=4]
	\arrow["\dashv"{anchor=center, rotate=-49}, draw=none, from=9, to=7]
\end{tikzcd}\]
    Under the downward forgetful functor $\cat{W} \to \V$, the unit of the top adjunction evaluated at $\overline{A}$ is equivalent to the induced map of monad of the top triangle, evaluated at $\mathbbm{1}_{\cat{W}}$, which in turn is equivalent to the induced map of monad of the outer triangle, evaluated at $\mathbbm{1}_{\cat{V}}$.
    By the above discussion, this unit can be identified with the canonical map to some center.
    
    It now suffices to identify the functor $\Mod_{\overline{A}} \xto[- \otimes_{\overline{A}} M] \Mod_{\overline{B}}$ with the one induced by the action described in \autoref{rem:en_bimodule_version} (where the ``$n$'' there is our ``$b$'').
    By construction, the functor $- \otimes_{\overline{A}} M$ above corresponds to the bimodule structure induced by the restriction to the last coordinate $\mathbb{R}_{\overline{1}} \xmono[\textnormal{cl}] \mathbb{R}^n_{\overline{n}}$:
\[\begin{tikzcd}
	&& X & {\mathbb{R}_{\overline{1}}} \\
	{\mathbb{R}^n_{\overline{k}}} & {\mathbb{R}^n_{\overline{k}, \sigma}} & {\mathbb{R}^n_{\overline{k}, \overline{\sigma}}} & {\mathbb{R}^n_{\overline{n}}}
	\arrow[two heads, from=1-3, to=1-4]
	\arrow["{\textnormal{cl}}", "{(i)}"', hook, from=1-3, to=2-3]
	\arrow["{\textnormal{cl}}", hook, from=1-4, to=2-4]
	\arrow["\sim"{description}, no head, from=2-2, to=2-1]
	\arrow[dashed, from=2-3, to=2-2]
	\arrow[two heads, from=2-3, to=2-4]
\end{tikzcd}\]
    where $X = \mathbb{R}_{\leq 0} \times \mathbb{R}^a \times \{0\}^b \bigsqcup_{\{0\}^{d+1}}  \mathbb{R}_{\geq 0} \times \{0\}^a \times \mathbb{R}^b \epi \mathbb{R}_{\overline{1}}$ and
    the inclusion $(i)$ is induced by:
    \begin{enumerate}
        \item permuting the coordinates by mapping $\{1, \ldots, a+1\} \mapsto \{ k \leq i < k+d \mid \sigma_i = \epsilon\}$, and mapping $\{a+2, \ldots, d+1\} \mapsto \{ k \leq i < k+d \mid \sigma_i = \eta\}$,
        \item if $a > 0$, flipping the sign of the second coordinate if the sign of the permutation is negative.
    \end{enumerate}
    We then have the following diagram (see \autoref{fig:XY} below for a visual interpretation of the upper part of the diagram):
\begin{equation}\label{eq:XY}
\begin{tikzcd}
	&&& {\mathbb{R}^{b+1}_{\overline{1}}} \\
	&& Y & {\left(\mathbb{R}_{\leq 0} \times \{0\}^b\right) \bigsqcup_{\{0\}^{b+1}} \left(\mathbb{R}_{\geq 0} \times \mathbb{R}^{b}\right)} \\
	{\mathbb{R}^n_{\overline{k}}} & {\mathbb{R}^d_{\overline{1}}} & X & {\mathbb{R}_{\overline{1}}} \\
	{\mathbb{R}^n_{\overline{k}}} & {\mathbb{R}^n_{\overline{k}, \sigma}} & {\mathbb{R}^n_{\overline{k}, \overline{\sigma}}} & {\mathbb{R}^n_{\overline{n}}}
	\arrow[two heads, from=2-3, to=1-4]
	\arrow["{\textnormal{cl}}"', hook', from=2-3, to=3-2]
	\arrow["{\textnormal{cl}}"', hook, from=2-4, to=1-4]
	\arrow[two heads, from=2-4, to=3-4]
	\arrow["\sim"{marking, allow upside down}, "{(ii)}"', no head, from=3-1, to=4-1]
	\arrow["{\textnormal{cl}}"', hook', from=3-2, to=3-1]
	\arrow["{\textnormal{cl}}"', hook', from=3-3, to=2-3]
	\arrow[two heads, from=3-3, to=2-4]
	\arrow["{\textnormal{cl}}", hook', from=3-3, to=3-2]
	\arrow[two heads, from=3-3, to=3-4]
	\arrow["{\textnormal{cl}}", "{(i)}"', hook, from=3-3, to=4-3]
	\arrow["{\textnormal{cl}}", hook, from=3-4, to=4-4]
	\arrow["\sim"{description}, no head, from=4-2, to=4-1]
	\arrow[dashed, from=4-3, to=4-2]
	\arrow[two heads, from=4-3, to=4-4]
\end{tikzcd}\end{equation}
    where $Y = (\{0\}^{a+1} \times \mathbb{R}^b \subset \{0\} \times \mathbb{R}^d \subset \mathbb{R}_{\geq 0} \times \mathbb{R}^d) \bigsqcup_{\{0\}^{a+1} \times \mathbb{R}^b} (\mathbb{R}_{\leq 0} \times \{0\}^a \times \mathbb{R}^b)$ is the intermediate space in the statement of the result,
    and $(ii)$ is the stratified automorphism induced by the same permutation and sign-flip as $(i)$.
    In particular, $(ii)$ is isotopic to the identity.
    Using notably \autoref{rem:vector_fields}, each face of the diagram can be seen to satisfy the condition of \autoref{lem:commutative_diagrams}, resulting in a commutative diagram of functors between categories of algebra.
    Using the isotopy $(ii) \sim \mathbbm{1}$, the left/top side corresponds to the action described in the statement.
    The right side corresponds to the construction of \autoref{rem:en_bimodule_version}.
    The bottom corresponds to the original description coming from \autoref{def:bar_handlebody_strata}. 
    Putting all these together, we get the desired identification.
\end{proof}

\begin{figure}[H]
\centering
\begin{tikzpicture}
    \def\scaleFactor{0.2}  
    \def\imageSeparation{5}  
    
    \node (img1) at (0, 0) {\includegraphics[scale=\scaleFactor]{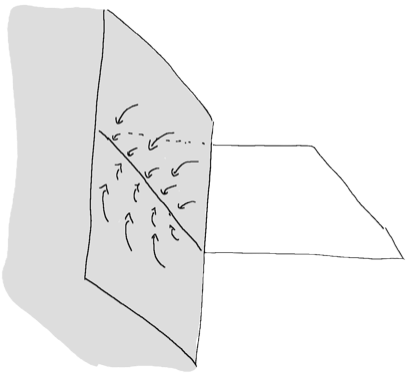}};

    \node (img2) at (0, -\imageSeparation) {\includegraphics[scale=\scaleFactor]{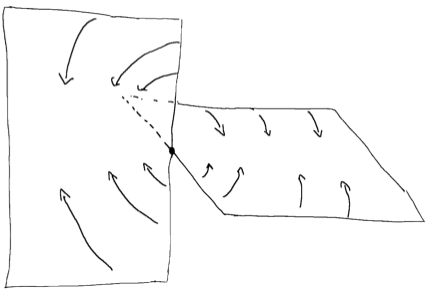}};
    
    \node (img3) at (\imageSeparation, \imageSeparation) {\includegraphics[scale=\scaleFactor]{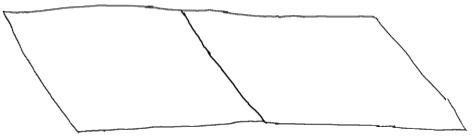}};

    \node (img4) at (\imageSeparation, 0) {\includegraphics[scale=\scaleFactor]{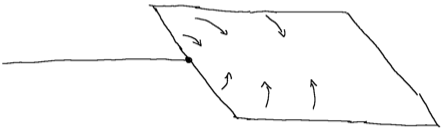}};

    \node (img5) at (\imageSeparation, -\imageSeparation) {\includegraphics[scale=\scaleFactor]{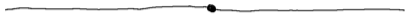}};
    
    \draw[{Hooks[right]}->, thick] (img2) -- (img1) node[midway, left] {cl};
    \draw[{Hooks[right]}->, thick] (img4) -- (img3) node[midway, left] {cl};
    \draw[->>, thick] (img1) -- (img3);
    \draw[->>, thick] (img2) -- (img4);
    \draw[->>, thick] (img2) -- (img5);
    \draw[->>, thick] (img4) -- (img5);
\end{tikzpicture}
\caption{
    Visualization of the diagram \eqref{eq:XY}.
}
\label{fig:XY}
\end{figure}

\section{Proof of the Criteria}\label{sec:proofs}

In this final section, we prove our main Theorems (\autoref{thm:full_dualizability_criterion} and \autoref{thm:invertibility_criterion}).
The core of the argument is inductive: as proved in \autoref{sec:adjoints}, to show that some object is fully-dualizable (resp. invertible), knowing that it already is dualizable, it suffices to show its (co)units are fully-dualizable (resp. invertible).
The tools developed in \autoref{sec:factorization_homology} then allow to precisely describe what happens when one iterates the ``$f \mapsto \textnormal{ (co)units of } f$'' construction in the higher Morita categories.
Combining both elements, we immediately obtain our criteria.

\subsection{Theorem A}\label{subsec:thm_A}

Before heading into the proof, we establish a small well-known lemma.
The idea behind this lemma is that the data exhibiting morphisms as higher full-adjoints is highly redundant.
This property is sometimes known as ambidexterity in the literature.

\begin{lemma}[Redundancy of Duality]\label{lem:redundancy}
    Let $\cat{C} \inset \Cat{3}$ and suppose we have an adjunction $L \in \begin{tikzcd}
        + & -
        \arrow[""{name=0, anchor=center, inner sep=0}, curve={height=-6pt}, from=1-1, to=1-2]
        \arrow[""{name=1, anchor=center, inner sep=0}, curve={height=-6pt}, from=1-2, to=1-1]
        \arrow["\dashv"{anchor=center, rotate=-90}, draw=none, from=0, to=1]
    \end{tikzcd} \in R $ such that the units and counits $\epsilon, \eta$ admit both left and right adjoints.
    Then $\epsilon$ and $\eta$ are fully-adjoint.
\end{lemma}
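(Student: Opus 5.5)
The plan is to show that the adjoints of $\epsilon$ and $\eta$ fit into \emph{periodic} sequences of $2$-adjunctions. We then feed these sequences into \autoref{prop:free_full_adj} (with $k=2$) and conclude with the universal property of $\SubAdj$ from \autoref{prop:Free_Sub_exist}.

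\textbf{Step 1: build two adjunctions $R \dashv L$.} Write $\eta\in 1_+ \Rightarrow RL$ and $\epsilon \in LR \Rightarrow 1_-$, and let $\eta^L,\eta^R,\epsilon^L,\epsilon^R$ be the assumed adjoints in the hom-$(\infty,2)$-categories. First I check that $(\epsilon^R, \eta^R)$ is the unit–counit pair of an adjunction $R \dashv L$. The types match: $\epsilon^R \in 1_- \Rightarrow LR$ and $\eta^R\in RL\Rightarrow 1_+$. For the triangle identities, start from the triangle identity $(\epsilon L)\circ(L\eta)\simeq \id_L$ of $L\dashv R$ and pass to right adjoints. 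Whiskering by a $1$-morphism preserves adjunctions, so $(\epsilon L)^R\simeq \epsilon^R L$ and $(L\eta)^R \simeq L\eta^R$. Right adjoints reverse vertical composition, and $\id^R\simeq\id$. Hence $(L\eta^R)\circ(\epsilon^R L)\simeq \id_L$, and symmetrically for $R$. These are exactly the triangle identities for $R\dashv L$ with unit $\epsilon^R$ and counit $\eta^R$. Since all this is an identity in $\ho_2$ of the relevant hom-categories, it suffices by the definition of adjoint (and \autoref{cor:riehl_verity}) to check it there. Passing instead to left adjoints yields a second adjunction $R\dashv L$ with unit $\epsilon^L$ and counit $\eta^L$.

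\textbf{Step 2: identify the two adjunctions.} By \autoref{thm:riehl_verity}, the space of adjunctions $\Adj\to \cat{C}$ with a prescribed left adjoint $R$ is either empty or contractible. This contractibility includes the whole datum (right adjoint, unit, counit), not just the right adjoint $L$. Since both adjunctions of Step 1 have left adjoint $R$ and right adjoint $L$, I expect to obtain $\epsilon^L\simeq\epsilon^R$ and $\eta^L\simeq \eta^R$ compatibly. This is the step I expect to be the main obstacle. The issue is that contractibility identifies the two adjunction data up to an automorphism of $L$, so a priori only up to twisting the unit and counit. To handle it, I would fix the equivalence on the right adjoint to be the identity of $L$, by restricting along $L\in c_1\to\Adj$ as well as $R$. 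Then the residual space of choices of $(\text{unit},\text{counit})$ with both $1$-morphisms fixed is contractible; this is the classical uniqueness of units and counits once both adjoints are fixed, which I would check in $\ho_2$.

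\textbf{Step 3: conclude.} Granting Step 2, $\epsilon^R\simeq\epsilon^L$ has right adjoint $\epsilon$ and left adjoint $\epsilon$. The sequence $\ldots\dashv \epsilon\dashv\epsilon^R\dashv\epsilon\dashv\epsilon^R\dashv\ldots$ is therefore a $\mathbb{Z}^{\to\leftarrow}$-indexed family of $2$-adjunctions in $\cat{C}$. It defines a cocone on the diagram of \autoref{prop:free_full_adj}, and hence a functor $\Free^{\{2\}}(c_2)\to\cat{C}$ whose zeroth projection is $\epsilon$. The same argument applies to $\eta$. Because $\Free^{\{2\}}(c_2)$ has $\{2\}$-adjoints, this functor factors through the counit $\SubAdj^{\{2\}}(\cat{C})\mono\cat{C}$ by \autoref{prop:Free_Sub_exist}. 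Thus $\epsilon$ and $\eta$ are fully-adjoint.
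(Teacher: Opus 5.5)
\textbf{Gap in Step 2.} Your Steps 1 and 3 follow the paper's own argument. The paper also passes to adjoints of the triangle identities, obtains two adjunctions $R\dashv L$, and then asserts $\epsilon^l\simeq\epsilon^r$ in one line ``by uniqueness of adjoints.'' You were right to distrust that identification, but your repair in Step 2 is false. Fixing both $1$-morphisms does not rigidify the datum. In the homotopy $2$-category, the unit--counit pairs exhibiting a fixed $R\dashv L$ form a torsor under $\mathrm{Aut}(L)$, via $(u,c)\mapsto\bigl((\theta R)\circ u,\ c\circ(R\theta^{-1})\bigr)$. Distinct $\theta$ give distinct units, because $\theta\mapsto(\theta R)\circ u$ is the mate bijection $\mathrm{End}(L)\cong\mathrm{Hom}(1_-,LR)$. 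What is unique is the counit once the unit is fixed, not the pair once both $1$-morphisms are fixed.

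No other argument can give $\epsilon^L\simeq\epsilon^R$ either, because it is false in general. Take the delooping of $\mathrm{Mor}_1(\mathrm{Mod}_k)$ with $L=A$ a smooth and proper algebra. The two adjoints of $\epsilon$ are the $A$-bimodules $A^\vee=\mathrm{Hom}_k(A,k)$ and $A^!=\mathrm{Hom}_{A\otimes A^{\mathrm{op}}}(A,A\otimes A^{\mathrm{op}})$, and these are mutually inverse. So they agree only if the Serre functor $S=-\otimes_A A^\vee$ of $\mathrm{Perf}(A)$ satisfies $S^2\simeq\mathrm{id}$. For the path algebra of the $A_2$ quiver, $S^3\simeq[1]$, hence $S^6\simeq[2]\not\simeq\mathrm{id}$, so $S^2\not\simeq\mathrm{id}$. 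Consequently the periodic chain $\epsilon\dashv\epsilon^R\dashv\epsilon\dashv\cdots$ of your Step 3, which amounts to $\epsilon^{RR}\simeq\epsilon$, does not exist in general.

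\textbf{How to repair it.} Keep $\theta$ instead of trying to kill it. Step 2 legitimately gives an invertible $\theta\colon L\Rightarrow L$ with $\epsilon^L\simeq(\theta R)\circ\epsilon^R$ and $\eta^L\simeq\eta^R\circ(R\theta^{-1})$, and that is enough. An invertible $2$-morphism has its inverse as both left and right adjoint, and adjoints of composites are composites of adjoints. Hence:
\begin{itemize}
\item $\epsilon^R\simeq(\theta^{-1}R)\circ\epsilon^L$ has right adjoint $\epsilon\circ(\theta R)$;
\item $\epsilon^L$ has left adjoint $\epsilon\circ(\theta^{-1}R)$.
\end{itemize}
More systematically, the class of $2$-morphisms of the form $\alpha\circ\epsilon\circ\beta$ or $\alpha\circ\epsilon^R\circ\beta$, with $\alpha,\beta$ invertible, is closed under taking left and right adjoints. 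This yields a generally non-periodic chain $\cdots\dashv\epsilon\circ(\theta^{-1}R)\dashv\epsilon^L\dashv\epsilon\dashv\epsilon^R\dashv\epsilon\circ(\theta R)\dashv(\theta^{-1}R)\circ\epsilon^R\dashv\cdots$. The colimit description in \autoref{prop:free_full_adj} never requires periodicity. So your Step 3 goes through unchanged with this chain, and the same argument applies to $\eta$.
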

\begin{proof}
    Let $\epsilon^r \dashv \epsilon \dashv \epsilon^l$ and $\eta^r \dashv \eta \dashv \eta^l$.
    By taking left adjoints to each triangle identities, we can see that $\epsilon^l$ and $\eta^l$ are respectively a unit and a counit for $R \dashv L$.
    Similarly, by taking right adjoints to each triangle identities, we can see that $\epsilon^r$ and $\eta^r$ are respectively a unit and a counit for the same adjunction $R \dashv L$.
    By uniqueness of adjoints, we get $\epsilon^l \simeq \epsilon^r$ and $\eta^l \simeq \eta^r$, which then implies the result.
\end{proof}

We can now prove our main Theorem.

\begin{construction}[Actions in \autoref{thm:full_dualizability_criterion}]\label{constr:canonical_actions_thm_A}
    Let $\V \inset \EAlg{\infty}(\Cat{1}^\geom)$ be a symmetric monoidal $(\infty,1)$-category with geometric realizations and compatible tensor product.
    Let $0 \leq k < n$ and $M$ be a $k$-morphism in $\Mor_n(\cat{V})$.
    Let $A$ (resp. $B$) be the (co)domain of $M$ if $1 \leq k$, or otherwise be the identity $\mathbbm{1}_\V$ if $k=0$.
    Recall that by convention, we let $S^{-1} = \emptyset$.
    For $0 \leq i \leq n-k$, the homological maps
    \[\begin{tikzcd}
        {\mathbb{R}^n_{\overline{k}}} & {\{0\}^{k-1} \times \mathbb{R}_{\geq 0} \times \mathbb{R}^i \times \{0\}^{n-k-i}} & {\mathbb{R}_{\geq 0}}
        \arrow["{\textnormal{cl}}", hook', from=1-2, to=1-1]
        \arrow["r", two heads, from=1-2, to=1-3]
    \end{tikzcd}\]
    (resp. with $\leq$) where $r$ is the radius function, induce a $\cat{E}_1$-algebra structure on $\int_{(S^{i-1}, D^i)} (M, A)$ (resp. $\int_{(S^{i-1}, D^i)} (M, B)$) acting on $M$.
    We denote these the \textbf{canonical $\int_{(S^{i-1}, D^i)} (M, A) \curvearrowright M$ and $M \curvearrowleft \int_{(S^{i-1}, D^i)} (M, B)$ actions}.
    See \autoref{fig:canonical_actions_thm_A} below for a visual interpretation of these actions.
\end{construction}

\begin{theorem}[Full-Dualizability Criterion]\label{thm:full_dualizability_criterion}
    Let $\V \inset \EAlg{\infty}(\Cat{1}^\geom)$, $0 \leq k < n$ and $M \in c_k \to \Mor_n(\cat{V})$ be as in \autoref{constr:canonical_actions_thm_A} above.
    Then $M$ is fully-dualizable if and only if, for all $0 \leq i \leq n-k$, the canonical actions
    \begin{equation*}[rCl]
        \int_{(S^{i-1}, D^i)} (M, A) \curvearrowright &M& \\
        &M& \curvearrowleft \int_{(S^{i-1}, D^i)} (M, B)
    \end{equation*}
    exhibit $M$ as a dualizable module, where $A$ (resp. $B$) is the (co)domain of $M$ (see \autoref{constr:canonical_actions_thm_A} above for details on these actions).
\end{theorem}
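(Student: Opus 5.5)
The plan is a downward induction on the level $k$, reducing full-dualizability of $M$ to ordinary adjointability of the top-level $n$-morphisms $\sigma(M)$, as $\sigma$ ranges over duality data. Concretely, I would show the following. $M$ is fully-dualizable if and only if, for every $d$-duality datum $\sigma \in \{\epsilon,\eta\}^{d}$ with $d = n-k$, the $n$-morphism $\sigma(M) \in \Alg{\mathbb{R}^n_{\overline{n}}}(\V)$ admits both a left and a right adjoint in $\Mor_n(\V)$. Here $M$ ranges over a $k$-morphism and all of its iterated duals $M^{\circ}$, but by the exchange property $\eta(f^{\circ}) \simeq \epsilon(f)^{\circ}$ these give nothing new.

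For the forward implication, $\SubAdj^{\{k+1,\ldots,n\}}(\Mor_n(\V))$ contains $M$. By \autoref{lem:graphical_adjunction} and \autoref{cor:units_in_morita} it then contains $\eta(M)$ and $\epsilon(M)$, and iterating with \autoref{lem:associativity} it contains every $\sigma(M)$. Each of these therefore has both adjoints.

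For the converse, I argue by induction on $n-k$. When $k = n-1$, the morphism $M$ already has the adjoint $M^{\circ}$ by \autoref{lem:graphical_adjunction}, so \autoref{cor:units_in_morita} produces a map $\Free^{\{k\}}(c_k) \to \Mor_n(\V)$. By \autoref{cor:lifting_once_fully_dualizable}, applied to the monomorphism $\SubAdj^{\{n\}}(\Mor_n(\V)) \mono \Mor_n(\V)$, it suffices that the four (co)units $\eta(M)$, $\epsilon(M)$, $\eta(M^{\circ})$, $\epsilon(M^{\circ})$ have both adjoints. \autoref{lem:redundancy} shows that having left and right adjoints is enough at this top level.

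For general $k$, I apply the inductive hypothesis to the $(k+1)$-morphisms $\eta(M)$ and $\epsilon(M)$. \autoref{lem:associativity} identifies $\sigma'(\sigma_k(M))$ with $\sigma(M)$, so their full-dualizability reduces to the same family of conditions. I then lift using \autoref{cor:lifting_once_fully_dualizable} into $\SubAdj^{\{k+1,\ldots,n\}}$, which combines with \autoref{prop:free_sub_compose} to give lifts for all higher levels.

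It remains to translate "$\sigma(M)$ has a left/right adjoint" into the stated module condition. By \autoref{prop:existence_dual}, $\sigma(M)$ is a left (resp. right) adjoint if and only if the induced $\mathbb{R}_{\geq 0}$-algebra (resp. $\mathbb{R}_{\leq 0}$-algebra) exhibits the last-coordinate action on it as a dualizable module. I would then identify this last-coordinate action geometrically, exactly as in the proof of \autoref{prop:identifying_units}. The argument uses diagram \eqref{eq:XY}, the closed inclusion into $\mathbb{R}^n_{\overline{k},\overline{\sigma}}$, and \autoref{lem:commutative_diagrams}. Through it, the acting algebra on $\sigma(M)$'s domain side becomes $\int_{(S^{a-1},D^a)}(M,A)$ acting on $M$, where $a$ is the number of $\epsilon$'s in $\sigma$, while the codomain side becomes $\int_{(S^{b-1},D^b)}(M,B)$ with $b = d-a$. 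This is the radius pushforward of \autoref{constr:canonical_actions_thm_A}: the handlebody $Y$ collapses via the radius map onto $\mathbb{R}_{\geq 0}$, and the punctured disk region gives the $(S^{i-1},D^i)$ factorization homology. As $\sigma$ varies, $a$ takes every value $0 \leq i \leq n-k$. Moreover, left adjointness of $\sigma(M)$ corresponds to the right action by $\int(M,B)$, right adjointness to the left action by $\int(M,A)$, and the roles of $A$ and $B$ swap under $\sigma \mapsto$ its complement. Hence the family of conditions is exactly the one in the statement.

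I expect the main obstacle to be this geometric identification: checking that the closed-inclusion-and-collapse square commutes at the level of homological morphisms, so that the $\mathcal{E}_1$-action on $\sigma(M)$ from \autoref{constr:actions_existence_dual} matches the radius-induced action. I would handle it by choosing compatible tubular-neighborhood vector fields as in \autoref{rem:vector_fields} and applying \autoref{lem:commutative_diagrams} face by face. The sign and permutation issue is handled by the isotopy $(ii) \sim \mathbbm{1}$.
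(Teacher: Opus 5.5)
Your proposal is correct and follows essentially the same route as the paper's proof. The paper argues by downward induction on $k$, reducing full-dualizability to both-sided adjointability of the top-level morphisms $\sigma(M)$; this uses \autoref{cor:lifting_once_fully_dualizable}, \autoref{prop:free_sub_compose}, \autoref{lem:redundancy}, \autoref{cor:units_in_morita} and \autoref{lem:associativity}. It then translates to module dualizability through \autoref{prop:existence_dual} and a commutative diagram of homological maps. The differences are cosmetic. You discard the $\sigma(M^{\circ})$ conditions up front via the exchange property, which also tacitly needs the $n$-op symmetry to see that $P^{\circ}$ has both adjoints iff $P$ does; the paper carries them along. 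The paper also uses a simpler one-sided radius diagram $\mathbb{R}_{\geq 0}\times\mathbb{R}^i \to \mathbb{R}_{\geq 0}$ instead of reusing \eqref{eq:XY}.
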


Note that when $k = 0$, we recover the usual criterion first stated by Lurie \cite[Rem. $4.1.27$]{Lur09a}:
\[
    \forall 0 \leq i \leq n, \qquad M \text{ is a dualizable } \int_{S^{i-1}} M\text{-module}.
\]

\begin{figure}[H]
\centering
\begin{tikzpicture}
    \def\scaleFactor{0.4}  

    \node (img1) at (0, 0) {\includegraphics[scale=\scaleFactor]{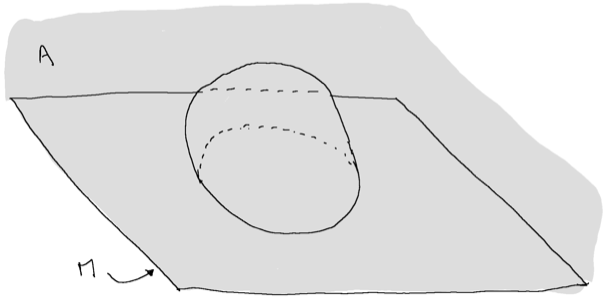}};
\end{tikzpicture}
\caption{
    Visualization of the action on the left-hand side of \eqref{eq:action} for $i = 2$.
    The background space is $\mathbb{R}^2 \times \mathbb{R}_{\geq 0}$ with $A$ supported on the $3$-dimensional strata (hence seen as an $\mathcal{E}_3$-algebra) and $M$ supported on the $2$-dimensional strata (hence seen as an $\mathcal{E}_3$-algebra acted on in an $\mathcal{E}_1$-one-sided-way by $A$).
    The $\mathcal{E}_1$-algebra of interest is then the factorization homology of an upper-half dome, with the $\mathcal{E}_1$-structure given by the radial symmetry, and it acts on $M$ pictured at the center of the bottom plane.
    This factorization homology can be interpreted as a relative form of Hochschild Homology.
}
\label{fig:canonical_actions_thm_A}
\end{figure}
\begin{proof}
    Let $d = n-k$.
    Firstly, note that the actions described in \autoref{constr:canonical_actions_thm_A} correspond precisely to all the actions involved in \autoref{prop:existence_dual} applied to the collection of $n$-morphisms given by: $\sigma(M)$ and $\sigma(M^\circ)$ for $\sigma \inset \{\epsilon, \eta\}^d$ a $d$-duality datum. 
    Indeed, similary to the proof of \autoref{prop:identifying_units}, one has a commutative diagram of homological maps
\[\begin{tikzcd}
	{\mathbb{R}^n_{\overline{k}}} & {\mathbb{R}_{\geq 0} \times \mathbb{R}^i} && {\mathbb{R}_{\geq 0}} \\
	{\mathbb{R}^n_{\overline{k}}} & {\mathbb{R}^n_{\overline{k}, \sigma}} & {\mathbb{R}^n_{\overline{k}, \overline{\sigma}}} & {\mathbb{R}^n_{\overline{n}}}
	\arrow["\sim"{marking, allow upside down}, "{(I)}"', no head, from=1-1, to=2-1]
	\arrow[hook', "{\textnormal{cl}}"', from=1-2, to=1-1]
	\arrow["{\textnormal{r}}", two heads, from=1-2, to=1-4]
	\arrow["{(II)}", "{\textnormal{cl}}"', hook, from=1-2, to=2-3]
	\arrow[hook, "{\textnormal{cl}}", from=1-4, to=2-4]
	\arrow["\sim"{marking, allow upside down}, no head, from=2-2, to=2-1]
	\arrow[dashed, from=2-3, to=2-2]
	\arrow[two heads, from=2-3, to=2-4]
\end{tikzcd}\]
    where $i = |\{\sigma = \eta\}|$ (see \autoref{def:duality_data} for details on this notation), $(II)$ is the inclusion of the coordinates $\{\sigma = \eta\} \subset \{k, \ldots, k+d-1\}$ and $(I)$ is defined as follows:
    \begin{enumerate}
        \item if $i = n-k$, then $(I)$ is the identity,
        \item if $i < n-k$, then $(I)$ is an automorphism that permutes the coordinates by sending $\{\sigma = \eta\} + 1$ to $\{k+1, \ldots, k+d\}$, and flips the sign of one of the remaining coordinates if necessary to ensure the orientation is preserved. 
    \end{enumerate}
    In any case, $(I)$ is isotopic to the identity.
    By \autoref{rem:vector_fields}, each face of the diagram satisfies the condition of \autoref{lem:commutative_diagrams}, resulting in a commutative diagram of functors between categories of algebra.
    This identifies the actions as claimed.

    Hence, it suffices secondly to show by downward induction on $k$ the following result:
    $M$ is fully-dualizable if and only if, for all $\sigma \inset \{\epsilon, \eta\}^d$ a $d$-duality datum, $\sigma(M)$ and $\sigma(M^\circ)$ have both left and right adjoints, when seen as $n$-morphisms in $\Mor_n(\cat{V})$.

    \begin{enumerate}
        \item \textbf{Case $k = n-1$:} By \autoref{lem:redundancy}, the condition above is equivalent to having both left and right adjoints for each of the units and counits of the full-$\{n-1\}$-adjunction.
        By \autoref{cor:units_in_morita}, these units and counits are described by the algebras $\eta(M)$, $\epsilon(M)$, $\eta(M^\circ)$, $\epsilon(M^\circ)$.
        By \autoref{prop:existence_dual}, this is equivalent to the condition in the statement of the theorem.

        \item \textbf{Inductive Step:} Note first that by \autoref{prop:free_sub_compose} and \autoref{cor:lifting_once_fully_dualizable}, because $\Mor_n(\V)$ has $\{1, \ldots, n-1\}$-adjoints, $M$ is fully-dualizable if and only if we have a lift of the dashed arrow, if and only if we have a lift of each of the dotted arrows:
\[\begin{tikzcd}
	& {\Sub^{\{k+1, \ldots, n\}}(\Mor_n(\cat{V}))} & {\Mor_n(\cat{V})} \\
	{c_{k+1}} \\
	& {c_k} & {\Free^{\{k\}}(c_k)} \\
	{c_{k+1}}
	\arrow[hook, from=1-2, to=1-3]
	\arrow[dotted, from=2-1, to=1-2]
	\arrow["{\epsilon(f^{\vee n})}"{description, pos=0.6}, from=2-1, to=3-3]
	\arrow[from=3-2, to=1-2]
	\arrow[two heads, from=3-2, to=3-3]
	\arrow[dashed, from=3-3, to=1-2]
	\arrow["M"', from=3-3, to=1-3]
	\arrow[dotted, from=4-1, to=1-2]
	\arrow["{\eta(f^{\vee n})}"{description}, from=4-1, to=3-3]
\end{tikzcd}\]
    In other words, $M$ is fully-dualizable if and only if all the units and counits involved in the full-$\{k\}$-adjunction are themselves fully-dualizable.
    As above, by \autoref{cor:units_in_morita}, these units and counits are described by the algebras $\eta(M)$, $\epsilon(M)$, $\eta(M^\circ)$, $\epsilon(M^\circ)$.
    By the inductive hypothesis, duality is then equivalent to:
    for all $\sigma \inset \{\epsilon, \eta\}^{d-1}$ a $(d-1)$-duality datum, $\sigma(\eta(M))$, $\sigma(\epsilon(M))$, $\sigma(\eta(M^\circ))$, $\sigma(\epsilon(M^\circ))$, $\sigma(\eta(M)^\circ)$, $\sigma(\epsilon(M)^\circ)$, $\sigma(\eta(M^\circ)^\circ)$ and $\sigma(\epsilon(M^\circ)^\circ)$ all have left and right adjoints, when seen as $n$-morphisms in $\Mor_n(\cat{V})$.
    Using \autoref{lem:associativity} and the exchange property of $\circ$ with $\eta \leftrightarrow \epsilon$, we can conclude the proof.
    \end{enumerate}
\end{proof}

In particular, dualizable module actions are well-understood in the category $\Spe$ of spectra, and we get this immediate corollary.

\begin{corollary}[Full-Dualizability of Ring Spectra]
    Let $0 \leq k < n$ and $M \in c_k \to \Mor_n(\Spe)$ be as in \autoref{constr:canonical_actions_thm_A} above with $\V = \Spe$.
    Then $M$ is fully-dualizable if and only if, for all $1 \leq i \leq n-k$, the canonical actions
    \begin{equation*}[rCl]\label{eq:action}
        \int_{(S^{i-1}, D^i)} (M, A) \curvearrowright &M& \\
        &M& \curvearrowleft \int_{(S^{i-1}, D^i)} (M, B)
    \end{equation*}
    exhibit $M$ as a perfect or compact module, where $A$ (resp. $B$) is the (co)domain of $M$ (see \autoref{constr:canonical_actions_thm_A} for details on these actions).
\end{corollary}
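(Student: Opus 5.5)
This corollary follows from \autoref{thm:full_dualizability_criterion} specialized to $\V = \Spe$. The only work is to translate ``dualizable module'' into ``perfect'' or ``compact'' module, and to handle the range of $i$. First I check that $\Spe$ lies in $\EAlg{\infty}(\Cat{1}^\geom)$. It is presentable and stable, and its smash product preserves all colimits in each variable, so in particular it preserves geometric realizations. Hence the theorem applies: $M$ is fully-dualizable if and only if, for every $0 \leq i \leq n-k$, the two canonical actions exhibit $M$ as a dualizable module.

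The plan is to rewrite each dualizability condition as in \autoref{cor:existence_dual_spectra}. In the proof of the theorem, each of these actions appears, through the identification of \autoref{constr:canonical_actions_thm_A} with the actions of \autoref{constr:actions_existence_dual}, as the $\mathbb{R}_{\geq 0}$- (resp. $\mathbb{R}_{\leq 0}$-) algebra attached to some $n$-morphism $\sigma(M)$ or $\sigma(M^\circ)$. Being a left (resp. right) adjoint as a $1$-morphism $\mathbbm{1} \to B$ in $\Mor_1(\Spe)$ means exactly that the underlying module is a dualizable left (resp. right) module over the relevant $\mathcal{E}_1$-ring spectrum. By \cite[$7.2.4.4$]{Lur17}, for modules over an $\mathcal{E}_1$-ring in $\Spe$, dualizable is equivalent to perfect and to compact. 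Applying this to $R = \int_{(S^{i-1}, D^i)} (M, A)$ and to $\int_{(S^{i-1}, D^i)} (M, B)$ gives the condition as stated.

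The remaining point, and the only real subtlety, is that the corollary quantifies over $1 \leq i \leq n-k$, whereas the theorem uses $0 \leq i \leq n-k$. I would show that the $i=0$ condition is automatic, or at least subsumed by the others. For $i = 0$ we have $S^{-1} = \emptyset$ and $D^0$ is a point, so $\int_{(\emptyset, \bullet)}(M,A)$ is just $A$ (resp. $B$) acting on $M$ through the (co)domain structure. When $k=0$ this is $\mathbbm{1}_{\Spe} = \mathbb{S}$, and $M$ is compact over $\mathbb{S}$ because $M$ is perfect over the $i \geq 1$ algebras, which are themselves built compactly. More robustly, in the case $k \geq 1$, I would argue that the $i=0$ condition is exactly the statement that $M$ already has adjoints as a $k$-morphism. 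That statement can be recovered from the $i=1$ data through the unit map $A \to \int_{(S^0,D^1)}(M,A)$, provided the latter is perfect over $A$.

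If this reduction does not go through cleanly, the fallback is to note that the theorem's $i=0$ clause and the $i\geq 1$ clauses together are what is being translated, and to read the corollary's range as also including $i=0$. I expect this bookkeeping of the index range to be the main obstacle; the perfect/compact translation itself is formal.
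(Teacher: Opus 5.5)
Your main step is exactly the paper's one-line proof: apply \autoref{thm:full_dualizability_criterion} with $\V=\Spe$, then translate ``dualizable module'' into ``perfect'' and ``compact'' via \cite[$7.2.4.4$]{Lur17}, which is \autoref{cor:existence_dual_spectra}. The genuine problem is your attempt to show that the $i=0$ clause of Theorem A is automatic. This cannot be done, because it is false.

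Take $n=1$, $k=0$ and $M=\mathbb{S}[t]$, the free $\mathcal{E}_1$-ring on one class in degree $0$.
\begin{itemize}
\item The only clause in the range $1\le i\le n-k$ is $i=1$. It asks that $M$ be perfect over $\int_{S^0}M\simeq M\otimes M^{\mathrm{op}}$. This holds, thanks to the cofiber sequence of bimodules $M\otimes\mathbb{S}\otimes M\to M\otimes M\to M$.
\item The $i=0$ clause fails, since $M\simeq\bigoplus_{m\ge 0}\mathbb{S}$ is not a compact spectrum.
\end{itemize}
So by Theorem A (equivalently, the classical smooth-and-proper criterion), $M$ is not fully dualizable in $\Mor_1(\Spe)$, even though it satisfies every condition in the printed range. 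The same $M$, viewed as a $1$-morphism $\mathbb{S}\to\mathbb{S}$ in $\Mor_2(\Spe)$, shows the $i=0$ clause is not redundant for $k\ge 1$ either.

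Concretely, each of your justifications breaks down:
\begin{itemize}
\item The algebras $\int_{S^{i-1}}M$ are not ``built compactly'' unless $M$ already is, so the first argument is circular.
\item The $i=0$ clause is not the statement that $M$ has adjoints as a $k$-morphism; every $k$-morphism with $k<n$ in $\Mor_n(\V)$ has adjoints. It is a properness condition: $M$ must be dualizable over $A$ and over $B$ themselves. This is needed for some of the top-level iterated (co)units $\sigma(M)$ to be adjointable.
\item Your recovery ``through the unit map $A\to\int_{(S^0,D^1)}(M,A)$, provided the latter is perfect over $A$'' is valid only under that extra hypothesis. That hypothesis is not part of the $i\ge 1$ data and is itself a properness condition.
\end{itemize}

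The only correct way out is your fallback: the range must be $0\le i\le n-k$. This matches \autoref{thm:full_dualizability_criterion} and Lurie's criterion recalled right after it for $k=0$. The paper's proof (``a direct consequence of Theorem A and \autoref{cor:existence_dual_spectra}'') silently proves exactly this version, and with that reading your argument is complete and coincides with it. You should commit to that reading outright, drop the reduction attempt, and flag that the printed range $1\le i$ omits a necessary condition.
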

\begin{proof}
    This is a direct consequence of \autoref{thm:full_dualizability_criterion} and \autoref{cor:existence_dual_spectra}.
\end{proof}

\begin{remark}[Characterizing Full-Dualizability in Other Higher Categories]
    Implicitly, the proof uses that: if $f$ is a fully-$\{k, \ldots, l\}$-dualizable $k$-morphism for $k \leq l$, full-$\{k, \ldots, l+1\}$-dualizability is equivalent to all $\sigma(f)$ having left adjoints for $\sigma \inset \{\epsilon, \eta\}^{l-k+1}$ a $(l-k+1)$-duality datum.
    This can be proved independently in any higher category with the tools of \autoref{sec:adjoints} following the same argument.
    It would simply remain to show a few more ``redundancy'' lemmas.
    In our case, the Morita category is built with natural symmetries $\Aut(X) \curvearrowright \Alg{X}$ (see \autoref{def:operads}), which allows us to easily access this redundancy.
\end{remark}

\begin{remark}[Other Proof Strategies]
    There are many possible proof strategies for this result, from closest to farthest from the one presented here:
    \begin{enumerate}
        \item One could do an upward induction on $k$ instead, using some right associativity lemma for the pinch constructions.
        This lemma is significantly heavier to spell out as the stratifications involved are more complicated, but the core idea is the same.

        \item One could also imagine a more elementary proof of the result above without explicit description of the pinch maps, by naively writing an inductive argument.
        This would however not translate well to the proof of Theorem B, where the explicit description of the pinch maps is very helpful to elucidate \autoref{prop:identifying_units}.

        \item One could also try to write the $(\infty,n)$-functor $\Bord_n^{\textnormal{fr}} \to \Mor_n(\V)$ and use handlebody decomposition to identify which actions on the top $n$-morphism level need to be dualizable.
        This is a much more technical approach.
        Such a functor was constructed in \cite{Sch14} for a pointed version of $\Mor_n(\V)$ with the language of factorization algebras and relative categories.
        A proof following this approach would involve re-proving many technical results already present in \cite{AFT17a, AFR18, AFT17} under a different formalism, or showing an equivalence of both formalisms.
        We avoid this by using a crucial idea: deciding if a morphism is fully-dualizable is not necessarily a homotopy-coherent problem.
    \end{enumerate}
\end{remark}

\subsection{Theorem B}\label{subsec:thm_B}

Similarly to the proof of Theorem $A$, we start with a redundancy lemma.
Only this time, we are interested in the redundancy of invertibility properties.

\begin{lemma}[Redundancy of Invertibility]\label{lem:redundancy_invertibility}
    Let $\cat{C} \inset \Cat{2}$ and suppose we have two parallel adjunctions
\[\begin{tikzcd}
	\bullet && \bullet
	\arrow[""{name=0, anchor=center, inner sep=0}, "f"{description}, from=1-1, to=1-3]
	\arrow[""{name=1, anchor=center, inner sep=0}, curve={height=-18pt}, from=1-3, to=1-1]
	\arrow[""{name=2, anchor=center, inner sep=0}, curve={height=18pt}, from=1-3, to=1-1]
	\arrow["\dashv"{anchor=center, rotate=-90}, draw=none, from=0, to=1]
	\arrow["\dashv"{anchor=center, rotate=-90}, draw=none, from=2, to=0]
\end{tikzcd}\]
    in $\cat{C}$ such that the units are both equivalences.
    Then $f$ is an equivalence.
\end{lemma}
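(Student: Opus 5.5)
The plan is to reduce to a statement in an ordinary bicategory and then give a short two-sided-inverse argument. First, invertibility of a $1$-morphism in $\cat{C} \inset \Cat{2}$ is detected in the homotopy $(2,2)$-category $\ho_2(\cat{C})$. This is the same principle used at the end of the proof of \autoref{lem:adj_contractible}, where it is even reduced to $\ho_1$. Adjunctions, together with their units and the invertibility of those units, are likewise preserved by $\cat{C} \to \ho_2(\cat{C})$. So I will work entirely in a classical $2$-category and no homotopy-coherence issues should arise.

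I will name the two adjunctions in the diagram. Write $f \in a \to b$. Let $h \in b \to a$ be the upper arrow, with $f \dashv h$ and unit $\eta \in \mathrm{id}_a \Rightarrow h \circ f$. Let $g \in b \to a$ be the lower arrow, with $g \dashv f$ and unit $\eta' \in \mathrm{id}_b \Rightarrow f \circ g$. By hypothesis both $\eta$ and $\eta'$ are invertible $2$-morphisms. Hence $h \circ f \simeq \mathrm{id}_a$, so $h$ is a left inverse of $f$ up to $2$-isomorphism. Likewise $f \circ g \simeq \mathrm{id}_b$, so $g$ is a right inverse of $f$.

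The usual argument then gives $h \simeq h \circ (f \circ g) = (h \circ f) \circ g \simeq g$, by whiskering $\eta'$ with $h$ and $\eta$ with $g$. It follows that $f \circ h \simeq f \circ g \simeq \mathrm{id}_b$, which together with $h \circ f \simeq \mathrm{id}_a$ exhibits $h$ (equivalently $g$) as an inverse of $f$ in $\ho_2(\cat{C})$. Therefore $f$ is an equivalence in $\cat{C}$.

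There is no real obstacle here. The only point needing care is the first reduction, namely that being an equivalence is a property detected after truncating to $\ho_2$ (or even to $\ho_1$). I would justify this exactly as in \autoref{lem:adj_contractible}, via completeness \cite[Thm. $6.2$ and Prop. $6.4$]{Rez01}. The counits play no role, and I do not expect to need the triangle identities at all.
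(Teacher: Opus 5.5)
Your proof is correct, but it takes a different and more elementary route than the paper's. The paper also starts by passing to the homotopy $(2,2)$-category. It then uses the bicategorical Yoneda lemma to reduce to $\mathbf{Cat}_{(1,1)}$ and argues there. Invertibility of the unit of the adjunction in which $f$ is the left adjoint makes $f$ fully faithful. Full faithfulness of $f$ then forces the counit of the other adjunction (in which $f$ is the right adjoint) to be invertible, so this second adjunction is an adjoint equivalence. You instead read the two invertible units as a left pseudo-inverse, $hf \simeq \mathrm{id}_a$, and a right pseudo-inverse, $fg \simeq \mathrm{id}_b$. You then conclude by the standard $h \simeq h(fg) \simeq (hf)g \simeq g$ argument. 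Your route avoids the Yoneda reduction and the $\mathbf{Cat}$-specific link between fully faithful functors and invertible (co)units. It uses neither counits nor triangle identities, and it only manipulates invertible $2$-cells, so it can be carried out in the homotopy category of the underlying $(\infty,1)$-category. In fact it shows that any $1$-morphism with a left and a right inverse up to invertible $2$-cells is an equivalence, whether or not adjunctions are present. The paper's route additionally shows that the given adjunction with $f$ as right adjoint is itself an adjoint equivalence. That also follows from your conclusion by uniqueness of adjoints. (As a minor point, in the diagram the upper arrow is the left adjoint of $f$ and the lower arrow is the right adjoint, the reverse of your labelling. This is harmless, since you only use that $f$ has one adjoint on each side.)
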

\begin{proof}
    Note that equivalences and adjoints are detected in the homotopy $(2,2)$-category.
    Without loss of generality, one can then assume that $\cat{C}$ is a $(2,2)$-category. 
    In this case, one can use the bicategorical Yoneda lemma \cite[Lem. $8.3.12$]{JY20} to reduce to the case where $\cat{C} = \mathbf{Cat}_{(1,1)}$.
    In $\mathbf{Cat}_{(1,1)}$, the unit of the bottom adjunction being a natural isomorphism implies that $f$ is fully faithful, which in turn implies that the counit of the top adjunction is a natural isomorphism.
    Hence, the top adjunction exhibits $f$ as an (adjoint) equivalence.
\end{proof}

Using the above lemma, we can then prove our second criterion.

\begin{theorem}[Invertibility Criterion]\label{thm:invertibility_criterion}
    Let $\V \inset \EAlg{\infty}(\Cat{1}^\geom)$ be a symmetric monoidal $(\infty,1)$-category with geometric realizations and compatible tensor product.
    Let $0 \leq k < n$ and $M$ be a $k$-morphism in $\Mor_n(\cat{V})$.
    Then $M$ is invertible if and only if it is fully-dualizable and, for all $1 \leq i \leq n-k$, the universal $\mathcal{E}_{n-k-i+1}$-algebra maps
    \[
        \int_{(S^{i-1}, D^i)} (M, A) \longeq \HC{n-k-i}^B(M) \\
        \int_{(S^{i-1}, D^i)} (M, B) \longeq \HC{n-k-i}^A(M)
    \]
    are equivalences, where $A$ (resp. $B$) is the (co)domain of $M$ (see \autoref{constr:canonical_action_units} for details on these maps).
\end{theorem}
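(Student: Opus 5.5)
The plan mirrors the proof of \autoref{thm:full_dualizability_criterion}: a downward induction on $k$, reducing invertibility of $M$ to invertibility of the top-level $n$-morphisms $\sigma(M)$ and $\sigma(M^\circ)$, with $\sigma$ ranging over $d$-duality data, $d=n-k$. Those top-level invertibility conditions are then translated into the stated maps via \autoref{prop:identifying_units}.

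First, invertible implies fully-dualizable. An equivalence $c_k\to\Mor_n(\V)$ factors through $|c_k|_{>k-1}=c_{k-1}$. By \autoref{lem:free_contractible}, $\Free^{\{k,\ldots,n\}}(c_k)\epi c_{k-1}$ is the localization, so the composite $\Free^{\{k,\ldots,n\}}(c_k)\to c_{k-1}\to\Mor_n(\V)$ lifts $M$. Since $\Free$ and $\SubAdj$ are adjoint, this lift factors through $\SubAdj^{\{k+1,\ldots,n\}}$. Hence we may assume throughout that $M$ is fully-dualizable. By \autoref{lem:free_contractible} again, a fully-dualizable $M$ is invertible iff the corresponding functor $\Free^{\{k,\ldots,n\}}(c_k)\to\Mor_n(\V)$ inverts all $i$-morphisms for $i>k$.

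The inductive claim I would prove is the following: a fully-dualizable $k$-morphism $M$ is invertible iff, for every $\sigma\in\{\epsilon,\eta\}^d$, the $n$-morphisms $\sigma(M)$ and $\sigma(M^\circ)$ are invertible. The argument is by downward induction on $k$.
\begin{itemize}
\item \textbf{Base case $k=n-1$.} By \autoref{cor:units_in_morita}, the (co)units of the sequence of adjunctions are $\eta(M),\epsilon(M),\eta(M^\circ),\epsilon(M^\circ)$. If these are invertible, \autoref{lem:adj_contractible} (or directly \autoref{lem:redundancy_invertibility}, applied to $M^\circ\dashv M\dashv M^\circ$) shows $M$ is invertible. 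Conversely, if $M$ is invertible, its adjoint data can be chosen with invertible (co)units, and by homotopy uniqueness (\autoref{cor:riehl_verity}) so are the given ones.
\item \textbf{Inductive step.} I would use the colimit description of \autoref{prop:free_full_adj} together with \autoref{prop:free_sub_compose}. The functor inverts all higher morphisms iff $M$ is invertible and all units and counits $\eta(f^{\vee m}),\epsilon(f^{\vee m})$ are invertible. These (co)units are the four algebras above, and the condition on $M$ itself follows from the (co)units by \autoref{lem:adj_contractible}. Applying the induction hypothesis to those $(k+1)$-morphisms, and then using \autoref{lem:associativity} and the exchange $\eta(f^\circ)\simeq\epsilon(f)^\circ$ as in Theorem A, reduces everything to the $\sigma(M)$ and $\sigma(M^\circ)$.
\end{itemize}

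Next comes the top level. An $n$-morphism $N=\sigma(M)$ that has adjoints is invertible iff it has a right adjoint with invertible unit and a left adjoint with invertible unit; this is the content of \autoref{lem:redundancy_invertibility}. By \autoref{prop:identifying_units}, the unit of $\sigma(M)\dashv\sigma(M)^\vee$, viewed in $\V$, is the universal map
\[
\int_{(S^{a-1},D^a)}(M,A)\to\HC{b}^B(M), \qquad a=|\{\sigma=\epsilon\}|,\quad b=d-a .
\]
The left-adjoint unit is handled by applying the same proposition to $\sigma(M)^\circ$ or $\sigma(M^\circ)$, which swaps $A$ and $B$. Setting $i=a$ gives the two families in the statement. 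The case $a=0$ is $\mathbbm{1}\to\HC{d}^B(M)$; I expect it to be redundant given the conditions for $i\geq 1$, or else handled through the ambidexterity with $\epsilon\leftrightarrow\eta$. Since the forgetful functor $\EAlg{b+1}(\V)\to\V$ is conservative, it suffices to check that the underlying maps in $\V$ are equivalences.

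I expect the main obstacle to be the bookkeeping in this last step. One has to check that as $\sigma$ ranges over all words and over $M$ versus $M^\circ$, the units realize exactly the maps indexed by $1\le i\le n-k$ with both $A$ and $B$, and that the remaining conditions (counits, and $i=0$) follow by redundancy. I would try the symmetry $x_k\mapsto -x_k$ first, together with \autoref{lem:redundancy_invertibility}, so that only units are needed. The other delicate point is the claim that invertibility of all the (co)units forces the functor out of $\Free^{\{k\}}(c_k)$ to factor through the localization. That should follow from applying $|-|_{>k}$ to the colimit diagram of \autoref{prop:free_full_adj}.
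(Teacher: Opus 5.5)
Your argument is the paper's proof: invertible $\Rightarrow$ fully-dualizable via \autoref{lem:free_contractible}; a downward induction in which $M$ is invertible iff $\eta(M)$ and $\epsilon(M)$ are (\autoref{lem:adj_contractible}, \autoref{cor:units_in_morita}), propagated with \autoref{lem:associativity} and $\eta(f^\circ)\simeq\epsilon(f)^\circ$; and, at the top level, \autoref{lem:redundancy_invertibility} combined with \autoref{prop:identifying_units} for $\sigma(M)$ and $\sigma(M)^\circ$.

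The step that would fail is your plan to discard the word $\sigma=\eta^d$ (the case $a=0$) as redundant. Its units are the maps $A\to\HC{d}^B(M)$ and $B\to\HC{d}^A(M)$. The source here is $\int_{(S^{-1},D^0)}(M,A)=A$, which is $\mathbbm{1}$ only when $k=0$. These maps are not implied by the others.

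For a counterexample, take $n=1$, $k=0$, $\V=\Mod_{\mathbb{F}}$ for a field $\mathbb{F}$, and $M=0$. The zero algebra is smooth and proper, hence fully-dualizable. The only conditions with $i\geq 1$ reduce to $0\otimes 0^{\textnormal{op}}\to\End(0)$, which is an equivalence. Yet $0$ is not Morita invertible, and the condition that detects this is exactly the $a=0$ one, $\mathbbm{1}\to\HC{1}(0)\simeq 0$.

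The symmetry $x_k\mapsto -x_k$ cannot remove it either. Iterating the exchange gives $\eta^d(M^\circ)\simeq\epsilon^d(M)^\circ$, and the units of $\eta^d(M)$ and of $\epsilon^d(M)^\circ$ are precisely the two $i=0$ maps above. So the symmetry only relabels these conditions; it does not eliminate them.

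The fix is therefore not a redundancy lemma but the index range. The paper's proof, like yours, quantifies over all $\sigma\in\{\epsilon,\eta\}^d$, including $\eta^d$. Its claim that invertibility of ``all these maps'' is equivalent to invertibility of all those units therefore only holds for the criterion with $0\leq i\leq n-k$. The remark right after the theorem requires this too: the Azumaya condition $k\to\HC{1}(M)$ it claims to recover is the $i=0$ map.

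With $i=0$ included, your bookkeeping closes. The units of $\sigma(M)$ give the $(A,B)$-family at $i=a$. The units of $\sigma(M)^\circ$ give the $(B,A)$-family at $i=d-a$. Since $a$ ranges over $0,\ldots,d$, you get both families for every $0\leq i\leq n-k$, and only units, never counits, are needed.
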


Fundamentally, one can interpret invertibility of these maps as saying: $(i-1)^\textnormal{th}$-higher Hochschild Homology of $M$ relative to $A$ computes the universal $\mathcal{E}_{k-n-i+1}$-algebra acting on $M$ in a $B$-linear way (and vice-versa).
Note that once again, we recover Lurie's criterion for $k=0$, and more familiarly, we recover the usual conditions for an Azumaya algebra when $n=1$ and $k=0$: $M$ is dualizable and 
\begin{equation*}
    k \longeq \HC{1}(M) \\
    M^{\textnormal{op}} \otimes_k M \longeq \End_k(M)
\end{equation*}

\begin{proof}
    By \autoref{lem:free_contractible}, the functor $\Free^{\{k, \ldots, n\}}(c_k) \epi c_{k-1}$ is epimorphic and correspond under precomposition to some $[c_k, \Mor_n(\cat{V})]_{\simeq} \mono [\Free^{\{k, \ldots, n\}}(c_k), \Mor_n(\cat{V})]$.
    Hence, any invertible $k$-morphism is automatically fully-dualizable.
    We can then assume $M$ is fully-dualizable, and show invertibility is equivalent to all these maps of algebra being invertible.
    
    Let $d = n - k$ (which can be zero).
    Similarly to Theorem A, now using \autoref{prop:identifying_units}, we first observe that invertibility of all these maps is precisely equivalent to the statement:
    for all $\sigma \inset \{\epsilon, \eta\}^d$ a $d$-duality datum, the unit of $\sigma(M) \dashv \sigma(M)^\vee$ and of $\sigma(M)^\circ \dashv (\sigma(M)^\circ)^\vee$ are invertible.
    As a consequence, it suffices to show by downward induction on $k$ the equivalence between invertibility of $M$ and this condition.
    \begin{enumerate}
        \item \textbf{Base Case $k = n$:} this is an immediate consequence of \autoref{lem:redundancy_invertibility} above.
        \item \textbf{Inductive Step:} By \autoref{lem:adj_contractible} and \autoref{cor:units_in_morita}, $M$ is invertible if and only if $\epsilon(M)$ and $\eta(M)$ are invertible as $(k+1)$-morphisms in $\Mor_n(\cat{V})$.
        The statement then follows from the inductive hypothesis and \autoref{lem:associativity}.
    \end{enumerate}
\end{proof}

\bibliographystyle{alpha}
\bibliography{jabref}

@Book{JY20,
  author        = {Johnson, Niles and Yau, Donald},
  publisher     = {Oxford University Press},
  title         = {2-{D}imensional {C}ategories},
  year          = {2021},
  address       = {Oxford},
  isbn          = {0198871376},
  month         = jan,
  archiveprefix = {arXiv},
  date          = {2020-02-14},
  doi           = {10.1093/oso/9780198871378.001.0001},
  ean           = {9780198871378},
  eprint        = {2002.06055},
  eprintclass   = {math.CT},
  eprinttype    = {arXiv},
  pagetotal     = {640},
  primaryclass  = {math.CT},
  url           = {https://oxford.universitypressscholarship.com/10.1093/oso/9780198871378.001.0001/oso-9780198871378},
}

@Book{Lur09,
  author    = {Lurie, Jacob},
  publisher = {Princeton University Press},
  title     = {Higher {T}opos {T}heory. ({AM-170})},
  year      = {2009},
  isbn      = {9780691140490},
  eprint    = {math/0608040},
}

@Misc{Lur17,
  author       = {Lurie, Jacob},
  howpublished = {\url{https://www.math.ias.edu/~lurie/papers/HA.pdf}},
  month        = sep,
  title        = {Higher {A}lgebra},
  year         = {2017},
  url          = {https://www.math.ias.edu/~lurie/papers/HA.pdf},
}

@Article{AFR18,
  author        = {Ayala, David and Francis, John and Rozenblyum, Nick},
  journal       = {Advances in Mathematics},
  title         = {Factorization homology I: Higher categories},
  year          = {2018},
  issn          = {0001-8708},
  month         = jul,
  pages         = {1042--1177},
  volume        = {333},
  archiveprefix = {arXiv},
  doi           = {10.1016/j.aim.2018.05.031},
  eprint        = {1504.04007},
  primaryclass  = {math.AT},
  publisher     = {Elsevier BV},
  url           = {https://www.sciencedirect.com/science/article/pii/S0001870818302068},
}

@Article{AFT17,
  author        = {Ayala, David and Francis, John and Tanaka, Hiro Lee},
  journal       = {Selecta Mathematica},
  title         = {Factorization Homology of Stratified Spaces},
  year          = {2017},
  issn          = {1420-9020},
  month         = apr,
  number        = {1},
  pages         = {293--362},
  volume        = {23},
  archiveprefix = {arXiv},
  date          = {2017/01/01},
  doi           = {10.1007/s00029-016-0242-1},
  eprint        = {1409.0848},
  id            = {Ayala2017},
  isbn          = {1420-9020},
  primaryclass  = {math.AT},
  publisher     = {Springer Science and Business Media LLC},
  url           = {https://doi.org/10.1007/s00029-016-0242-1},
}

@Article{AFR18a,
  author        = {Ayala, David and Francis, John and Rozenblyum, Nick},
  journal       = {Journal of the European Mathematical Society},
  title         = {A Stratified Homotopy Hypothesis},
  year          = {2018},
  issn          = {1435-9863},
  month         = dec,
  number        = {4},
  pages         = {1071--1178},
  volume        = {21},
  archiveprefix = {arXiv},
  doi           = {10.4171/jems/856},
  eprint        = {1502.01713},
  primaryclass  = {math.AT},
  publisher     = {European Mathematical Society - EMS - Publishing House GmbH},
}

@Article{Lur09a,
  author        = {Lurie, Jacob},
  title         = {On the {C}lassification of {T}opological {F}ield {T}heories},
  year          = {2009},
  archiveprefix = {arXiv},
  eprint        = {0905.0465},
  primaryclass  = {math.CT},
  url           = {https://arxiv.org/abs/0905.0465},
}

@Article{BS20,
  author        = {Barwick, Clark and Schommer-Pries, Christopher},
  journal       = {Journal of the American Mathematical Society},
  title         = {On the {U}nicity of the {H}omotopy {T}heory of {H}igher {C}ategories},
  year          = {2020},
  issn          = {1088-6834},
  month         = apr,
  number        = {4},
  pages         = {1011--1058},
  volume        = {34},
  archiveprefix = {arXiv},
  doi           = {https://doi.org/10.1090/jams/972},
  eprint        = {1112.0040},
  primaryclass  = {math.AT},
  publisher     = {American Mathematical Society (AMS)},
  url           = {https://arxiv.org/abs/1112.0040},
}

@Article{Rez10,
  author    = {Rezk, Charles},
  journal   = {Geometry \& Topology},
  title     = {A {C}artesian {P}resentation of {W}eak $n$–{C}ategories},
  year      = {2010},
  issn      = {1465-3060},
  month     = jan,
  number    = {1},
  pages     = {521--571},
  volume    = {14},
  doi       = {10.2140/gt.2010.14.521},
  publisher = {Mathematical Sciences Publishers},
}

@Unpublished{Joy97,
  author = {Joyal, Andr{\'e}},
  title  = {Disks, {Duality} and $\Theta$-{C}ategories},
  year   = {1997},
}

@Article{AF18,
  author        = {Ayala, David and Francis, John},
  title         = {Flagged {H}igher {C}ategories},
  year          = {2018},
  archiveprefix = {arXiv},
  eprint        = {1801.08973},
  primaryclass  = {math.CT},
  url           = {https://arxiv.org/abs/1801.08973},
}

@Article{SS86,
  author    = {Schanuel, Stephen and Street, Ross},
  journal   = {Cahiers de {T}opologie et {G}{\'e}om{\'e}trie {D}iff{\'e}rentielle {C}at{\'e}goriques},
  title     = {The {F}ree {A}djunction},
  year      = {1986},
  number    = {1},
  pages     = {81--83},
  volume    = {27},
  language  = {en},
  mrnumber  = {845410},
  publisher = {Dunod \'editeur, publi\'e avec le concours du CNRS},
  url       = {https://www.numdam.org/item/CTGDC_1986__27_1_81_0/},
  zbl       = {0592.18002},
}

@Article{RV16,
  author    = {Riehl, Emily and Verity, Dominic},
  journal   = {Advances in Mathematics},
  title     = {Homotopy {C}oherent {A}djunctions and the {F}ormal {T}heory of {M}onads},
  year      = {2016},
  issn      = {0001-8708},
  month     = jan,
  pages     = {802-888},
  volume    = {286},
  doi       = {https://doi.org/10.1016/j.aim.2015.09.011},
  publisher = {Elsevier BV},
  url       = {https://www.sciencedirect.com/science/article/pii/S0001870815003503},
}

@PhdThesis{Sch14,
  author    = {Scheimbauer, Claudia Isabella},
  school    = {ETH Zurich},
  title     = {Factorization {H}omology as a {F}ully {E}xtended {T}opological {F}ield {T}heory},
  year      = {2014},
  address   = {Zurich},
  note      = {Diss., Eidgenössische Technische Hochschule ETH Zürich, Nr. 22130.},
  type      = {Doctoral Thesis},
  copyright = {In Copyright - Non-Commercial Use Permitted},
  doi       = {10.3929/ethz-a-010399715},
  language  = {en},
  publisher = {ETH Zurich},
  size      = {1 Band},
}

@Article{Toe12,
  author    = {To{\"e}n, Bertrand},
  journal   = {Inventiones mathematicae},
  title     = {Derived {A}zumaya {A}lgebras and {G}enerators for {T}wisted {D}erived {C}ategories},
  year      = {2012},
  issn      = {1432-1297},
  month     = jan,
  number    = {3},
  pages     = {581--652},
  volume    = {189},
  doi       = {https://doi.org/10.1007/s00222-011-0372-1},
  publisher = {Springer Science and Business Media LLC},
}

@Article{AFT17a,
  author    = {Ayala, David and Francis, John and Tanaka, Hiro Lee},
  journal   = {Advances in Mathematics},
  title     = {Local {S}tructures on {S}tratified {S}paces},
  year      = {2017},
  issn      = {0001-8708},
  month     = feb,
  pages     = {903-1028},
  volume    = {307},
  doi       = {https://doi.org/10.1016/j.aim.2016.11.032},
  publisher = {Elsevier BV},
  url       = {https://www.sciencedirect.com/science/article/pii/S0001870816316097},
}

@Article{Hau23,
  author        = {Haugseng, Rune},
  title         = {Some {R}emarks on {H}igher {M}orita {C}ategories},
  year          = {2023},
  archiveprefix = {arXiv},
  eprint        = {2309.09761},
  primaryclass  = {math.CT},
  url           = {https://arxiv.org/abs/2309.09761},
}

@Article{Hau17,
  author    = {Haugseng, Rune},
  journal   = {Geometry {\&} Topology},
  title     = {The {H}igher {M}orita {C}ategory of $\mathbb{E}_{n}$–{A}lgebras},
  year      = {2017},
  issn      = {1465-3060},
  month     = may,
  number    = {3},
  pages     = {1631 -- 1730},
  volume    = {21},
  doi       = {10.2140/gt.2017.21.1631},
  publisher = {MSP},
  url       = {https://doi.org/10.2140/gt.2017.21.1631},
}

@Article{BS24,
  author        = {Barkan, Shaul and Steinebrunner, Jan},
  title         = {Segalification and the {B}oardman-{V}ogt {T}ensor {P}roduct},
  year          = {2024},
  archiveprefix = {arXiv},
  eprint        = {2301.08650},
  primaryclass  = {math.AT},
  url           = {https://arxiv.org/abs/2301.08650},
}

@Article{DS11,
  author    = {Dugger, Daniel and Spivak, David},
  journal   = {Algebraic {\&} Geometric Topology},
  title     = {Rigidification of {Q}uasi-{C}ategories},
  year      = {2011},
  issn      = {1472-2747},
  month     = jan,
  number    = {1},
  pages     = {225--261},
  volume    = {11},
  doi       = {http://dx.doi.org/10.2140/agt.2011.11.225},
  publisher = {Mathematical Sciences Publishers},
}

@Article{Hau17a,
  author    = {Haugseng, Rune},
  journal   = {Proceedings of the American Mathematical Society},
  title     = {On the {E}quivalence between {$\Theta _{n}$}-{S}paces and {I}terated {S}egal {S}paces},
  year      = {2017},
  issn      = {1088-6826},
  month     = dec,
  number    = {4},
  pages     = {1401--1415},
  volume    = {146},
  doi       = {http://dx.doi.org/10.1090/proc/13695},
  eprint    = {https://arxiv.org/abs/1604.08480},
  publisher = {American Mathematical Society (AMS)},
}

@Article{AF24,
  author    = {Ayala, David and Francis, John},
  title     = {The {T}angle {H}ypothesis: {D}imension {1}},
  year      = {2024},
  copyright = {Creative Commons Attribution 4.0 International},
  doi       = {https://doi.org/10.48550/arXiv.2410.23965},
  publisher = {arXiv},
}

@Article{Rez01,
  author    = {Rezk, Charles},
  journal   = {Transactions of the American Mathematical Society},
  title     = {A {M}odel for the {H}omotopy {T}heory of {H}omotopy {T}heory},
  year      = {2001},
  issn      = {00029947, 10886850},
  number    = {3},
  pages     = {973--1007},
  volume    = {353},
  publisher = {American Mathematical Society},
  url       = {http://www.jstor.org/stable/221843},
  urldate   = {2025-10-06},
}

@PhdThesis{Bar05,
  author   = {Barwick, Clark},
  title    = {{$(\infty, n)$}-{C}at as a {C}losed {M}odel {C}ategory},
  year     = {2005},
  note     = {Copyright - Database copyright ProQuest LLC; ProQuest does not claim copyright in the individual underlying works; Last updated - 2023-03-03},
  isbn     = {978-0-542-00534-3},
  journal  = {ProQuest Dissertations and Theses},
  language = {English},
  pages    = {48},
  url      = {http://turing.library.northwestern.edu/login?url=https://www.proquest.com/dissertations-theses/∞-i-n-cat-as-closed-model-category/docview/305445747/se-2},
}

@Misc{kerodon,
  author       = {Jacob Lurie},
  howpublished = {\url{https://kerodon.net}},
  title        = {Kerodon},
  year         = {2018},
}

@Article{Ram24a,
  author        = {Ramzi, Maxime},
  title         = {Locally {R}igid {$\infty$}-{C}ategories},
  year          = {2024},
  archiveprefix = {arXiv},
  eprint        = {2410.21524},
  primaryclass  = {math.CT},
  url           = {https://arxiv.org/abs/2410.21524},
}

@Article{HHLN24,
  author        = {Haugseng, Rune and Hebestreit, Fabian and Linskens, Sil and Nuiten, Joost},
  title         = {Lax {M}onoidal {A}djunctions, {T}wo-{V}ariable {F}ibrations and the {C}alculus of {M}ates},
  year          = {2024},
  archiveprefix = {arXiv},
  eprint        = {2011.08808},
  primaryclass  = {math.CT},
  url           = {https://arxiv.org/abs/2011.08808},
}

@Article{BD95,
  author   = {Baez, John C. and Dolan, James},
  journal  = {Journal of Mathematical Physics},
  title    = {Higher‐{D}imensional {A}lgebra and {T}opological {Q}uantum {F}ield {T}heory},
  year     = {1995},
  issn     = {0022-2488},
  month    = {11},
  number   = {11},
  pages    = {6073-6105},
  volume   = {36},
  doi      = {10.1063/1.531236},
  eprint   = {https://pubs.aip.org/aip/jmp/article-pdf/36/11/6073/19188806/6073_1_online.pdf},
  url      = {https://doi.org/10.1063/1.531236},
}

@Article{BJSS21,
  author    = {Brochier, Adrien and Jordan, David and Safronov, Pavel and Snyder, Noah},
  journal   = {Algebraic {\&}; Geometric Topology},
  title     = {Invertible {B}raided {T}ensor {C}ategories},
  year      = {2021},
  issn      = {1472-2747},
  month     = aug,
  number    = {4},
  pages     = {2107--2140},
  volume    = {21},
  doi       = {https://doi.org/10.2140/agt.2021.21.2107},
  publisher = {Mathematical Sciences Publishers},
}

\end{document}